\documentclass[11pt]{amsart}
\usepackage{amssymb}
\usepackage{amsfonts}
\usepackage{color}

\usepackage{xy}
\xyoption{all}




\setlength{\textheight}{225mm} 
\setlength{\topmargin}{0.46cm}
\setlength{\textwidth}{150mm} 
\setlength{\oddsidemargin}{0.46cm}
\setlength{\evensidemargin}{0.46cm}
\setlength{\parindent}{0pt}
\newtheorem{theorem}{Theorem}[section]
\newtheorem{proposition}[theorem]{Proposition}
\newtheorem{lemma}[theorem]{Lemma}
\newtheorem{corollary}[theorem]{Corollary}

\newtheorem{example}[theorem]{Example}

\theoremstyle{remark}
\newtheorem{remark}[theorem]{Remark}

\renewenvironment{proof}{{\noindent\bf Proof.}}{\hfill $\Box$\par\vskip3mm}

\newcommand{\im}{{\rm Im}\,}

\newcommand{\Hom}{{\rm Hom}}

\newcommand{\Ext}{{\rm Ext}}

\newcommand{\Aa}{\mathcal{A}}

\newcommand{\Cc}{\mathcal{C}}

\newcommand{\Ff}{\mathcal{F}}

\newcommand{\Mm}{\mathcal{M}}

\def\NN{{\mathbb N}}

\def\KK{{\mathbb K}}

\begin{document}
\title{On Extensions of Rational Modules}

\begin{abstract}
We investigate when the categories of all rational $A$-modules and of finite dimensional rational modules are closed under extensions inside the category of $C^*$-modules, where $C^*$ is the cofinite topological completion of $A$. We give a complete characterization of these two properties, in terms of a topological and a homological condition. We also give connections to other important notions in coalgebra theory such as coreflexive coalgebras. In particular, we are able to generalize many previously known partial results and answer some questions in this direction, and obtain large classes of coalgebras for which rational modules are closed under extensions as well as various examples where this is not true.
\end{abstract}

\author{Miodrag Cristian Iovanov\\}
\address{ 
University of Southern California, 3620 South Vermont Ave. KAP 108\\
Los Angeles, CA 90089, USA \&\\
University of Bucharest, Fac. Matematica \& Informatica,
Str. Academiei 14,
Bucharest 010014,
Romania, 
}
\email{yovanov@gmail.com, iovanov@usc.edu}

\thanks{2000 \textit{Mathematics Subject Classification}. Primary 16W30;
Secondary 16S90, 16Lxx, 16Nxx, 18E40}
\date{}
\keywords{Torsion Theory, Splitting, Coalgebra, Rational Module}
\maketitle

\section*{Introduction and Preliminaries}

Let $\Cc$ be an abelian category and $\Aa$ be a full subcategory of $\Cc$. We say that $\Aa$ is closed if it is closed under subobjects, quotients and direct sums (coproducts). With any closed subcategory $\Aa$ of $\Cc$, there is an associated trace functor (or preradical) $T:\Cc\rightarrow \Aa$ which is right adjoint to the inclusion functor $i:\Aa\rightarrow \Cc$; that is $T(M)$=the sum of all subobjects of $M$ which belong to $\Aa$. Classical examples include the torsion group of an abelian group or the or more generally the torsion part of an $R$-module for a commutative domain $R$, or the singular torsion, which is, for a left $R$-module $M$, defined as $Z(M)=\{x\in M| {\rm ann}_R(x){\rm\,is\,an\,essential\,ideal}\}$ (see \cite{G72}). 

Another important example is that of rational modules: given an algebra $A$, we call a module rational if it is a sum (colimit) of its finite dimensional submodules. The category of rational $A$-modules is equivalent to the category of right $C$-comodules $\Mm^C$, where $C=R(A)=A^0$ is the coalgebra of representative functions on $A$ or the finite dual algebra of $A$ (\cite[Chapter 1]{DNR}). The dual of the coalgebra is again an algebra, and the category of rational $A$-modules is a closed subcategory of left $C^*$-modules ${}_{C^*}\Mm=C^*-{\rm Mod}$. In fact, there is a morphism $A\rightarrow C^*$, and $C^*$ can be thought as a completion of $A$ with respect to the linear topology having a basis of neighborhoods of $0$ consisting of finite ideals of $A$ (see \cite{Tf}).

The above described situation has roots in algebraic geometry. Let $G$ be an affine algebraic group scheme over an algebraically closed field $K$ of positive characteristic $p$ and let $A$ be the Hopf algebra representing $G$ as a functor from {\it Commutative Algebras} to {\it Groups} ($A$ is the "algebra of functions" of $G$). If $M$ be the augmentation ideal of $A$, one defines $M_n:=A\{x^{p^n}|x\in M\}\subseteq A$. Then there is a sequence of finite dimensional Hopf algebras $A/M_n$, with canonical projections $A\rightarrow A/M_n\rightarrow A/M_s$ for $n\geq s$. Geometrically, these correspond to the n'th power of the Frobenius morphism of the scheme. The dual family of finite dimensional Hopf algebras $(A/M_n)^*$ together with the morphisms $(A/M_s)^*\hookrightarrow (A/M_n)^*$ forms an inductive family of Hopf algebras, and the algebra 
$$B=\lim\limits_{\stackrel{\longrightarrow}{n}}(A/M_n)^*$$
is called the {\it hyperalgebra} of $G$. The category of finite dimensional $G$-representations is equivalent to the category of finite dimensional $A$-comodules. Note that $B$ embedds in $A^*$ canonically as algebras, and so every $A$-comodule is a $B$-module, and hence, the category of (rational) $G$-modules or $A$-comodules is a closed subcategory of the category of $B$-modules (modules over the hyperalgebra of $G$). We refer the reader to \cite{FP,Su} and the classical text \cite{J} for further details.


Given a closed subcategory $\Aa$ of $\Cc$, one is often interested also in the situation when $\Aa$ is also closed under extensions, i.e. if $0\rightarrow M'\rightarrow M\rightarrow M''\rightarrow 0$ is a sequence in $\Cc$ with $M',M''$ in $\Aa$, then $M$ is in $\Cc$. In this case, $\Aa$ is usually called a Serre subcategory of $\Cc$. The property of being closed under extensions has also been called localizing, for an obvious reason: in this situation, one can form the quotient category $\Cc/\Aa$ (which is a localization of $\Cc$), see \cite{G}. The property of $\Aa$ being closed under extensions is easily seen to be equivalent to $T$ being a radical, i.e. to the property $T(M/T(M))=0$ for all objects $M\in \Cc$. Examples of localizing categories are torsion modules over a commutative domain, or for the singular torsion modules over a ring $R$ (under an additional assumption that the singular ideal of $R$ is $0$; \cite{G72}), but, in general, a subcategory need not be localizing. For example, in $R-Mod$ one could consider the sub-category of semisimple modules in the situation when $R$ is not semisimple. It is a natural question to ask when the above mentioned rational subcategory of of $C^*$-modules is closed under extensions, or equivalently, when is the functor $Rat: {}_{C^*}\Mm\longrightarrow \Mm^C=Rat({}_{C^*})$ a radical; such a coalgebra is said to have a (left) torsion Rat functor. This question was investigated by many authors \cite{C0, CNO, NT1, L, L2, Rad, HR74, Sh, TT}; it is known, for example, that if $C$ is a right semiperfect coalgebra, then it has a (left) torsion Rat functor \cite{L,NT1}, or if $C$ is such that $C^*$ is left $\Ff$-Noetherian (meaning that left ideals which are closed in the finite topology of $C^*$ are finitely generated) then it also has a torsion Rat functor (\cite{Rad, CNO}). On the other hand, if some aditional conditions on $C$ are satisfied, then one can give an equivalent caracterization of this property: if the coradical $C_0$ of $C$ is finite dimensional (such a coalgebra is called almost connected) then $C$ has a torsion (left or right) Rat functor if and only if every cofinite left (or, equivalently in this case, right) ideal of $C^*$ is finitely generated (see \cite{C0, CNO}, and \cite{HR74} for an equivalent formulation), which is further equivalent to all the terms of the coradical filtration being finite dimensional. In fact, as noted in \cite{CNO}, all the known classes of examples of coalgebras for which rational left $C^*$-modules are closed under extensions turned out to be $\Ff$-Noetherian. This motivated the authors in \cite{CNO} to raise the question of whether the converse is true, i.e. if a coalgebra with torsion (left) rat functor is necesarily (left) $\Ff$-Noetherian. However, the construction in \cite{TT} showed that this is not true. Hence, the problem of completely caracterizing this property remained open, and is perhaps one of the main problems in coalgebra theory, and is important from a more general categorical perspective as pointed out above. 

In this paper, we propose a solution to this problem. There are several main points of our treatment. We provide a complete characterization for when a coalgebra has a left rational torsion functor in Theorem \ref{t.1}, in terms of a topological condition and a homological one. In fact, we first characterize the situation when the finite dimensional rational (left, or right) modules are closed under extensions (inside ${}_{C^*}\Mm=C^*$-Mod); this is equivalent to a topological condition, namely, the set closed cofinite (equivalently, open) ideals of $C^*$ is stable under products. Theorem \ref{t.1} states 

\vspace{.3cm}

{\bf Theorem} \emph{A coalgebra has a left rational torsion functor if and only if the open cofinite ideals are closed under products and $\Ext^1_{C^*}({}_{C^*}C_0,{}_{C^*}C)=0$}

\vspace{.3cm}

We also analyze and give conditions equivalent to the homological property in the above statement. Second, we also provide connections of this property with other important coalgebra notions, such as coreflexive coalgebras. In fact, if the set of simple comodules is a non-measurable set, then the above topological condition is equivalent to the coalgebra being coreflexive, which is a concept developed by several authors \cite{Rad,Rad3,HR74,Tf} (we note that no examples of non-measurable set is known, so this equivalence is in place in any "reasonable" example). Third, this general result allows us to give general classes of examples of coalgebras having a rational torsion functor. In particular, these generalize all the previously known examples, as well as several results characterizing coreflexive, $\Ff$-noetherian or almost noetherian coalgebras (i.e. coalgebras $C$ such that any cofinite ideal in $C^*$ is finitely generated). In particular, we show that:

\vspace{.3cm}

{\bf Theorem} \emph{If the set of cofinite closed ideals is closed under products and one of the following conditions is satisfied, then left rational $C^*$-modules are closed under extensions in ${}_{C^*}\Mm$:\\
-left injective indecomposables have finite coradical filtration;\\
-right injective indecomposables have finite coradical filtration;\\
-for each right injective indecomposable, there is some $n$ such that the $n$'th quotient of the Loewy series (coradical filtration) $L_{n+1}E/L_nE$ is finite dimensional}

\vspace{.3cm}

One other generalization of results on coreflexive coalgebras, $\Ff$-noetherian and almost noetherian coalgebras \cite{C0, HR74} and also on coalgebras with rational torsion functor \cite{CNO} is Theorem \ref{t.3}; this states that 

\vspace{.3cm}

{\bf Theorem} \emph{If in the Ext Gabriel quiver of the left comodules of a coalgebra $C$, vertices have finite left degree, then the following are equivalent:\\
-$E(T)$ has finite dimensional terms in its coradical filtration of all right comodules $T$\\
-in $E(T)^*$ cofinite submodules are finitely generated\\
-$C$ is (left) $\Ff$-noetherian\\
-$C$ is locally finite (see Section 4)\\
-$C$ has a left rational torsion functor} 

\vspace{.3cm}

Fourth, we also give some simple equivalent characterization of the property of $C$ being left $\Ff$-noetherian. With this, we can easily construct simple examples of coalgebras which have a left rational torsion functor but are not $\Ff$-notherian, also answering the aforementioned question of \cite{CNO}. Moreover, the examples show that, while a right semiperfect coalgebra is necesarily left $\Ff$-noetherian, it does not have to be right $\Ff$-noetherian, and also that the $\Ff$-noetherian property is not a left-right symmetric property. 

We close with a few open questions and future possible directions of research. In particular, we ask whether potentially the homological Ext condition might be eliminated from the main result (or, more generally, whether $Ext(C_0,C)$ is always $0$); also, all the known examples of coalgebras with left rational functor, also have a right rational functor, so it is natural to ask whether this is always the case. Based on our general examples, we suggest a possible way to attempt a counter-example, and also to approach the other questions.

\vspace{.5cm}

{\bf Basic properties and notations.}
We refer to the monographs \cite{DNR} and \cite{M} for definitions and properties of coalgebras and rational comodules. We recall here a few basic facts and notations; most definitions and notions used are recalled and refered throughout the paper. If $M$ is a left $C$-comodule, we consider the finite topology on $M^*$ with a basis of neighbourhoods of $0$ consisting of submodules (subspaces) of $M^*$ of the type $X^\perp_M=\{f\in M^*|f\vert_X=0\}$, for finite dimensional subcomodules (subspaces) $X$ of $M$. We write $X^\perp=X^\perp_M$ when there is no danger of confusion. The submodules $X^\perp$ for $X\subset M$ are precisely the closed submodules of $M$. For a subspace  $V$ of $M^*$ we write $V^\perp=\{x\in M|f(x)=0,\,\forall f\in V\}$. We have $X^\perp{}^\perp=X$ for $X\subset M$ and $Y^\perp{}^\perp$ is the closure of $Y\subseteq M^*$. We will frequently use the isomorphisms of $C^*$-modules $(M/X)^*\cong X^\perp$, $M^*/X^\perp\cong X^*$ and $X^\perp/Y^\perp\cong (Y/X)^*$ for subcomodules $X\subseteq Y$ of $M$. Recall, for example, from \cite{IO} that any finitely generated submodule $N$ of $M^*$ is closed in this topology, i.e. $N=X^\perp$, for some subcomodule $X$ of $M$. Also, note that if $X^\perp$ is a closed cofinite left ideal of $C^*$ and $I$ is a left ideal such that $X^\perp\subset I$, then $I$ is closed too: let $f_1,\dots,f_n\in I$ be such that $I=\sum\limits_iC^*f_i+X^\perp$; then $\sum\limits_{i=1}^nC^*f_i=Z^\perp$ is closed, so we get that $I=Z^\perp+X^\perp=(Z\cap X)^\perp$. Also, note that if $C^*/I$ is finite dimensional rational then $I$ is closed, since there is $W^\perp$ closed cofinite ideal such that $W^\perp(C^*/I)=0$, which shows that $W^\perp\subseteq I$.  

We will denote by $J$ the Jacobson radical of $C^*$, $J=Jac(C^*)$; we have that $J=C_0^\perp$, where $C_0$ is the coradical of $C$ (see \cite[Chapter 3]{DNR}). If $C_0\subseteq C_1\subseteq\dots \subseteq C_n$ is the coradical filtration of $C$, then we also have that $(J^n)^\perp=C_n$, $(J^n)^\perp{}^\perp=C_n^\perp$. For a $C$-comodule $M$ we write $L_0M\subseteq L_1M\subseteq \dots \subseteq L_nM\subseteq\dots$ for its Loewy series (coradical filtration), so $L_{n+1}M/L_nM$ is the socle (semisimple sub(co)module of $M/L_nM$). For comodules, one has $M=\bigcup\limits_nL_nM$. One also has that a right comodule $M$ is semisimple if and only if $JM=0$, and $L_nM=M$ if and only if $J^{n+1}M=0$ (e.g. \cite[Lemma 2.2]{ILMS}).


Recall that a coalgebra $(C,\Delta,\varepsilon)$ is called locally finite (\cite{HR74}) if for every two finite dimensional subspaces $X,Y$ of $C$, we have that the "wedge" $X\wedge Y$ is finite dimensional, where $X\wedge Y=\Delta^{-1}(X\otimes C+C\otimes Y)$. Note that by the fundamental theorem of coalgebras, this is equivalent to asking the condition for all finite subcoalgebras $X,Y$ of $C$. Indeed, if it is true for the wedge of finite subcoalgebras, and $X,Y$ are finite dimensional subspaces of $C$, there are finite dimensional subcoalgebras $X',Y'$ such that $X\subseteq X'$ and $Y\subseteq Y'$ and then $X\wedge Y\subseteq X'\wedge Y'$ which is finite dimensional. We also recall that a left (or right) $C$-comodule $X$ is called quasi-finite \cite{Tak2} if and only if $\Hom(S,X)$ for every simple left (right) comodule $S$, or, equivalently, $\Hom(N,X)$ is finite dimensional for every finite dimensional comodule $N$. For a right comodule $(M,\rho)$ subspace $X\subseteq C$ we denote by $cf(X)$ the coalgebra of coefficients of $X$, the smallest subcoalgebra $W$ of $C$ for which $\rho(M)\subseteq M\otimes W$. 

\section{Extensions of finite dimensional rational modules}

One of the important notions connected to the "rational extension problem" will prove to be that of locally finite coalgebras. We thus first note a few interesting caracterizations of locally finite coalgebras.

\begin{lemma}
If $X$ is a right subcomodule of $C$, $W$ a finite dimensional subcoalgebra of $C$, then $(X\wedge W)/X=\sum\limits_{f\in\Hom(P,C/X)}f(P)$, where $P$ is a finite dimensional generator of ${}_{W^*}\Mm=\Mm^W$. In particular, if $S$ is a simple right comodule and $W=cf(S)$, then $X\wedge W=\sum\limits_{f\in\Hom(S,C/X)}f(S)$.
\end{lemma}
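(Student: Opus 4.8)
The plan is to translate the wedge into a statement about the right $C$-comodule $M:=C/X$, and then to recognize the right-hand side as the trace of the generator $P$ in $M$. Write $\pi\colon C\to M$ for the projection and $\rho\colon M\to M\otimes C$ for the induced comodule structure, so that $\rho\circ\pi=(\pi\otimes\mathrm{id})\circ\Delta$. First I would record the elementary identity $(\pi\otimes\mathrm{id})^{-1}\big(M\otimes W\big)=X\otimes C+C\otimes W$, which holds because $\ker(\pi\otimes\mathrm{id})=X\otimes C$ while the restriction $\pi\otimes\mathrm{id}_W\colon C\otimes W\to M\otimes W$ is onto. Applying $\Delta^{-1}$ then rewrites the wedge as $X\wedge W=(\rho\circ\pi)^{-1}(M\otimes W)$, so that for $c\in C$ we get $c\in X\wedge W\Leftrightarrow\rho(\pi(c))\in M\otimes W$. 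Since $X\subseteq X\wedge W$, projecting gives $(X\wedge W)/X=M_W$, where $M_W:=\{m\in M\mid\rho(m)\in M\otimes W\}$.

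Next I would check that $M_W$ is exactly the largest subcomodule of $M$ whose coalgebra of coefficients lies in $W$, i.e. the largest subcomodule lying in $\Mm^W\subseteq\Mm^C$. That $M_W$ is a subcomodule, and moreover satisfies $\rho(M_W)\subseteq M_W\otimes W$, follows from coassociativity together with $\Delta(W)\subseteq W\otimes W$: writing $\rho(m)=\sum_i m_i\otimes w_i$ with the $w_i\in W$ linearly independent, the identity $(\rho\otimes\mathrm{id})\rho(m)=(\mathrm{id}\otimes\Delta)\rho(m)$ lies in $M\otimes W\otimes W$, and pairing the last tensor leg with a functional dual to $w_j$ yields $\rho(m_j)\in M\otimes W$, so each $m_j\in M_W$. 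Maximality is immediate, since any subcomodule $N$ with $\rho(N)\subseteq N\otimes W$ satisfies $N\subseteq M_W$ by definition.

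With this identification the remainder is formal. Since $P\in\Mm^W$, any $C$-comodule map $f\colon P\to M$ has image with coefficients in $W$, so $f(P)\subseteq M_W$ and $f$ factors through $M_W$; as the corestriction $\Mm^W\hookrightarrow\Mm^C$ is full this gives $\Hom(P,M)=\Hom(P,M_W)$ with matching images. Because $P$ is a generator of $\Mm^W={}_{W^*}\Mm$, its trace in any object is the whole object, i.e. $\sum_{f\in\Hom(P,M_W)}f(P)=M_W$. Chaining these equalities yields $(X\wedge W)/X=M_W=\sum_{f\in\Hom(P,C/X)}f(P)$. For the final assertion I would take $P=S$: when $S$ is simple, $cf(S)$ is a finite dimensional simple coalgebra, so $\Mm^{cf(S)}={}_{cf(S)^*}\Mm$ is semisimple with unique simple object $S$; hence $S$ is a generator and the general statement specializes to the stated formula.

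I expect the only genuine content to be the identification $(X\wedge W)/X=M_W$ together with the verification that $M_W$ is a $W$-subcomodule; once the wedge has been recast comodule-theoretically, the appeal to the defining property of a generator is purely formal.
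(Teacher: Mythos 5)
Your proof is correct and follows essentially the same route as the paper's: both arguments come down to (a) the observation that $\Delta^{-1}(X\otimes C+C\otimes W)$ is the preimage under $C\to C/X$ of the set of elements whose coaction lands in $(C/X)\otimes W$, and (b) the fact that a cyclic submodule of $C/X$ annihilated by $W^\perp$ is a $W^*$-module and hence covered by copies of the generator $P$. The only difference is organizational: the paper proves the two inclusions between $(X\wedge W)/X$ and $\sum_f f(P)$ directly, while you factor both through the intermediate identification of $(X\wedge W)/X$ with the largest subcomodule $M_W$ of $C/X$ lying in $\Mm^W$, which packages the same computations a bit more cleanly.
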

\begin{proof}
Let $Y\subset C$ be the subspace such that $Y/X=\sum\limits_{f\in\Hom(P,C/X)}f(S)$, and denote $\pi:C\rightarrow C/X$ the canonical projection. Then for $y\in Y$, write $y=\sum\limits_iy^i$ such that each $(C^*y^i+X)/X$ is a quotient of $P$; since $P\in\Mm^W$ we have $\pi(y^i_1)\otimes y^i_2\in Y/X\otimes W$. Therefore, $\Delta(y^i)=y^i_1\otimes y^i_2\in\pi^{-1}(Y/X\otimes W)=Y\otimes W+X\otimes C$. Indeed, if we write $\Delta(y^i)=\sum\limits_ju_j\otimes a_{ij}+\sum_kv_k\otimes b_{ik}$, where $\{u_j\}$ is a fixed basis of $X$ and $\{v_k\}$ is a fixed basis of $Y$ modulo $X$, then we get $\pi(v_k)\otimes b_{ik}\in Y/X\otimes W$ so $b_{ik}\in W$ since $\pi(v_k)$ is a basis of $Y/X$. Therefore, $y^i\in \Delta^{-1}(X\otimes C+Y\otimes W)\subseteq X\wedge W$, so $Y\subseteq X\wedge W$. Conversely, let $y\in X\wedge W$, so $y_1\otimes y_2=\Delta(y)\in X\otimes C+C\otimes W$. Then $\pi(y_1)\otimes y_2\in C/X\otimes W$, so $C^*\pi(y)$ is canceled by $W^\perp$. Therefore, it has an induced $W^*\cong C^*/W^\perp$-module structure. Hence, there is an epimorphism $(P)^n\rightarrow C^*\pi(y)\rightarrow 0$ as it is finite dimensional, and this shows that $\pi(y)\in Y$. 
\end{proof}

\begin{lemma}\label{l.quasifinite}
The following assertions are equivalent:\\
(i) $C$ is locally finite;\\
(ii) For every finite dimensional right subcomodule $X$ of $C$, $C/X$ is quasifinite.\\
(iii) For every simple right subcomodule $T$ of $C$, $C/T$ is quasifinite.\\
(iv) For every two simple subcoalgebras $U,W$ of $C$, $U\wedge W$ is finite dimensional.\\
(v) The left comodule versions of (ii) and (iii).
\end{lemma}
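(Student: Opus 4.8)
The engine for all five equivalences is the preceding Lemma, which for a finite dimensional subcomodule $X$ and a finite dimensional subcoalgebra $W$ identifies the finite dimensionality of $X\wedge W$ with that of $(X\wedge W)/X=\sum_{f\in\Hom(P,C/X)}f(P)$ (where $P$ is a finite dimensional generator of $\Mm^{W}$), and in particular, taking $W=cf(S)$, with the finiteness of $\Hom(S,C/X)$. The plan is to run everything through the single mechanism ``$X\wedge W$ finite dimensional $\Leftrightarrow$ $\Hom(P,C/X)$ finite dimensional''. For (i)$\Rightarrow$(ii): given a finite dimensional subcomodule $X$ and a simple right comodule $S$, put $W=cf(S)$; local finiteness makes $X\wedge W$ finite dimensional, so $\sum_{f\in\Hom(S,C/X)}f(S)=(X\wedge W)/X$ is finite dimensional, whence every $f\colon S\to C/X$ has image in a fixed finite dimensional subcomodule $M'$ and $\Hom(S,C/X)=\Hom(S,M')$ is finite dimensional. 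For (ii)$\Rightarrow$(i): for finite dimensional subcoalgebras $X,W$ the Lemma gives $(X\wedge W)/X=\sum_{f\in\Hom(P,C/X)}f(P)$, and since $C/X$ is quasifinite $\Hom(P,C/X)$ is finite dimensional, so $X\wedge W$ is finite dimensional; the reduction of arbitrary finite dimensional subspaces to finite dimensional subcoalgebras recalled before the Lemma then finishes it. The implication (ii)$\Rightarrow$(iii) is immediate, a simple subcomodule being finite dimensional.

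The next step is the bridge (iii)$\Leftrightarrow$(iv), which I would route through the condition
\[(\ast)\qquad \dim_k\Ext^1_C(S',S)<\infty\ \text{ for all simple right comodules }S,S'.\]
Using that $C$ is injective as a right $C$-comodule, the sequence $0\to T\to C\to C/T\to 0$ gives $\Hom(S',C/T)/\mathrm{im}\,\Hom(S',C)\cong\Ext^1_C(S',T)$, with $\Hom(S',C)=\Hom(S',C_0)$ finite dimensional; hence $C/T$ is quasifinite exactly when $\Ext^1_C(S',T)$ is finite dimensional for every simple $S'$, and since every simple type occurs as a subcomodule $T\subseteq C$, condition (iii) is equivalent to $(\ast)$. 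Applying the Lemma and the same injectivity argument to $X=U$ a simple subcoalgebra gives $U\wedge W$ finite dimensional $\Leftrightarrow\Ext^1_C(S_W,S_U)<\infty$, where $cf(S_U)=U$, $cf(S_W)=W$; so (iv) is also equivalent to $(\ast)$.

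The heart of the proof is $(\ast)\Rightarrow$(i), and this is where I expect the real difficulty. I would first reduce (i) to showing that $\wedge^nF$ is finite dimensional for every finite sum $F=U_1\oplus\cdots\oplus U_k$ of simple subcoalgebras and every $n$: any finite dimensional subcoalgebra lies in some $\wedge^mF$, and associativity of $\wedge$ gives $X\wedge Y\subseteq\wedge^{N}(X_0+Y_0)$. Let $D=\bigcup_n\wedge^nF$, a subcoalgebra with cosemisimple coradical $D_0=F$ and $D_n=\wedge^{n+1}F$. The first key point is that $D_1=F\wedge F$ is finite dimensional: since $F$ is cosemisimple, the $F$-bicomodule $D_1/D_0$ splits into the finitely many isotypic components indexed by pairs $(i,j)$, and the $(i,j)$-component is finite dimensional by $(\ast)$ (it is controlled by $\Ext^1_C(S_i,S_j)$). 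The second, crux, point is the passage from $D_1$ finite dimensional to all $D_n$ finite dimensional. Here the naive inclusions $\wedge^nF\subseteq\sum_{\text{words}}U_{i_1}\wedge\cdots\wedge U_{i_n}$ fail — dually, products of cofinite ideals need not be cofinite, which is exactly the phenomenon this paper studies — so I would instead invoke the standard embedding of the coradically graded coalgebra $\mathrm{gr}\,D$ into the cotensor coalgebra $T^c_{D_0}(D_1/D_0)$: the iterated reduced comultiplication embeds $D_n/D_{n-1}\hookrightarrow (D_1/D_0)^{\square_{D_0}n}$, a cotensor power of a finite dimensional bicomodule over the finite dimensional cosemisimple $D_0$, hence finite dimensional. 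Summing over $n\le N$ yields $\wedge^NF=D_{N-1}$ finite dimensional, and therefore (i).

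Finally, (v) follows by left--right symmetry: both (i) and $(\ast)$ are manifestly left--right symmetric, since simple subcoalgebras are two-sided and $\Ext^1$ of simples is computed symmetrically through $C$, which is injective on both sides. Thus the left-hand analogues of (ii) and (iii) are equivalent to $(\ast)$ by the very same arguments with the roles of left and right interchanged, closing the chain of equivalences.
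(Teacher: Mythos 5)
Your proof is correct, and on the easy implications it coincides with the paper's: (i)$\Leftrightarrow$(ii) via the preceding Lemma with $W=cf(S)$ resp.\ $P=W^*$, and (ii)$\Rightarrow$(iii) trivially. Where you genuinely diverge is in closing the loop from the ``simple'' conditions (iii), (iv) back to (i) and (ii). The paper never leaves elementary comodule theory: it proves (iv)$\Rightarrow$(iii) directly from the first Lemma via $T\wedge W\subseteq U\wedge W$, and proves (iii)$\Rightarrow$(ii) by induction on ${\rm length}(X)$, embedding $C/Y$ (quasifinite by induction) into $\bigoplus_i E(S_i)^{n_i}$ with finite multiplicities and then realizing $C/X$ essentially inside $E(S_0)/S_0\oplus E(S_0)^{n_0-1}\oplus\bigoplus_{i\neq 0}E(S_i)^{n_i}$, whose quasifiniteness is exactly what (iii) supplies. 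You instead reformulate (iii) and (iv) homologically as finiteness of $\Ext^1_C(S',S)$ for all pairs of simples (your condition $(\ast)$, correctly extracted from $0\to T\to C\to C/T\to 0$ and the injectivity and quasifiniteness of $C$), and then rebuild (i) from the coradical filtration of $D=\bigcup_n\wedge^nF$, using that $D_0=F$, $D_n=\wedge^{n+1}F$, that $D_1/D_0$ is finite dimensional by $(\ast)$, and that $D_n/D_{n-1}$ embeds in the $n$-fold cotensor power $(D_1/D_0)^{\square_{D_0}n}$. This route is sound: the reduction of (i) to wedge powers of finite cosemisimple $F$ is fine, and the two facts you import -- the $\Ext$-description of the $F$-bicomodule $D_1/D_0$ and the injectivity of ${\rm gr}\,D\to T^c_{D_0}(D_1/D_0)$ (Nichols, Heyneman--Radford) -- are standard, though in a self-contained write-up they need a citation or a proof (the second is dual to the surjectivity of $(J/J^2)^{\otimes_{A/J}n}\to J^n/J^{n+1}$; the first can even be replaced by a direct application of the paper's first Lemma with $X=F$, $P=F^*$). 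What your approach buys is machinery-for-insight: it makes the link with the $\Ext$-quiver explicit, which is precisely the language the paper adopts later in Theorem \ref{t.3}, and it yields the quantitative bound $\dim(\wedge^{n+1}F/\wedge^nF)\le\dim\bigl((D_1/D_0)^{\otimes n}\bigr)$; the paper's induction is shorter and self-contained. Your treatment of (v) via the left--right symmetry of (i) and of $(\ast)$ (equivalently, of (iv)) is exactly what the paper leaves implicit.
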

\begin{proof}
(i)$\Rightarrow$(ii) For simple $S$ and $W=cf(S)$, since $\sum\limits_{f\in\Hom(S,C/X)}f(S)=(X\wedge W)/X$ is finite dimensional, it follows that $\Hom(S,C/X)$ is finite dimensional too. \\
(ii)$\Rightarrow$(i) Use (ii) for a subcoalgebra $X$, which is then also a right subcomodule; take $W$ also a finite dimensional subcoalgebra of $C$ and since $P=W^*$ is a finite dimensional comodule wich generates $\Mm^W$, it follows that $(X\wedge W)/X=\sum\limits_{f\in\Hom(W^*,C/X)}f(W^*)$ is finite dimensional since $\Hom(W^*,C/X)$ is finite dimensional.\\
(ii)$\Rightarrow$(iii) and (i)$\Rightarrow$(iv) are obvious.\\
(iii)$\Rightarrow$(ii) is proved by induction on $length(X)$. For simple $X$ it is obvious. Assume the statement is true for $X$ with $length(X)\leq n$; consider $X$ with $length(X)=n+1$, and let $Y\subset X$ with $X/Y$ simple. Since $C/Y$ is quasifinite, we can write the socle of $C/Y$ as $s(C/Y)=\bigoplus\limits_{i\in I}S_i^{n_i}$, with $S_i$ simple nonisomorphic, $n_i$ finite, and such that $X/Y=S_0$ ($0\in I$). Then there is an embedding $C/Y\hookrightarrow\bigoplus\limits_{i\neq 0}E(S_i)^{n_i}\oplus E(S_0)^{n_0}$ with finite $n_i$'s, which can be extended to an essential embedding $C/X\cong (C/Y)/(X/Y)\hookrightarrow H=E(S_0)/S_0\oplus E(S_0)^{n_0-1}\oplus \bigoplus\limits_{i\neq 0}E(S_i)^{n_i}$. Now, since for each simple $S$, $\Hom(S,E(S_0)/S_0)$ is finite dimensional, then $\Hom(S,H)=\Hom(S,E(S_0)/S_0)\oplus \Hom(S,E(S_i)^{n_i-\delta_{i,0}})$ is obviously finite dimensional. It follows that $\Hom(S,C/X)$ is finite dimensional, and the proof is finished.\\
(iv)$\Rightarrow$(iii) If $T,L$ are a simple right subcomodules of $C$, let $U=cf(T)$, $W=cf(L)$; then $T\wedge W\subseteq U\wedge W$ is finite dimensional. Therefore, by the previous Lemma, we get $\sum\limits_{f\in\Hom(L,C/T)}f(L)=(T\wedge W)/T$ is finite dimensional, and so $\Hom(L,C/T)$ is finite dimensional. 
\end{proof}

We prove now the connection mentioned at the beginning of this section:

\begin{proposition}\label{p.ratlf}
Let $C$ be a coalgebra such that left rational $C^*$-modules are closed under extensions. Then $C$ is locally finite.
\end{proposition}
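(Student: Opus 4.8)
The plan is to prove the contrapositive: if $C$ is not locally finite, then left rational $C^*$-modules are not closed under extensions, which I exhibit by an explicit short exact sequence of left $C^*$-modules. I would use the failure of local finiteness in the form of condition (iv) of Lemma \ref{l.quasifinite}: choose simple subcoalgebras $U,W$ of $C$ with $D:=U\wedge W$ infinite dimensional. The reason for passing to subcoalgebras is that $D$ is again a subcoalgebra (a sub-bicomodule), so $U\subseteq D$ gives a short exact sequence $0\to U\to D\to D/U\to 0$ of (say, left) $C$-comodules, and by the opening Lemma of this section (in its left-comodule form, available by the left--right symmetry recorded in Lemma \ref{l.quasifinite}) the quotient $D/U=(U\wedge W)/U$ is a sum of homomorphic images of the simple $W$-comodule $P$. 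Hence $D/U$ is a $W$-isotypic, infinite dimensional comodule, and in particular it is annihilated by the cofinite closed ideal $W^\perp$.

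Next I would dualize over $K$. Since left $C$-comodules are rational right $C^*$-modules, the $K$-linear dual turns the sequence above into a short exact sequence of left $C^*$-modules
\[
0 \longrightarrow (D/U)^* \longrightarrow D^* \longrightarrow U^* \longrightarrow 0 .
\]
The two outer terms are rational: $U^*$ is finite dimensional because $U$ is a simple (hence finite dimensional) subcoalgebra, while $(D/U)^*$ is annihilated by $W^\perp$ and is therefore a module over the finite dimensional algebra $W^*=C^*/W^\perp$; as $W$-comodules coincide with $W^*$-modules and embed into $\Mm^C$, every such module is rational. The crux is that the middle term $D^*$ is \emph{not} rational: viewing $D^*$ as a left $C^*$-module through the surjection $C^*\to C^*/D^\perp\cong D^*$, it is the regular left $D^*$-module, and the identity $\varepsilon_D$ generates $D^*\cdot\varepsilon_D=D^*$, which is infinite dimensional. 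Since in any rational module every element generates a finite dimensional submodule, $\varepsilon_D\notin\mathrm{Rat}(D^*)$, so $D^*\neq\mathrm{Rat}(D^*)$. This produces a non-rational extension of rational modules, contradicting the hypothesis, and hence forces $C$ to be locally finite.

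The step I expect to be the main obstacle is verifying rationality of the submodule $(D/U)^*$: a priori it is an infinite product of copies of a simple module, and such products are typically non-rational, so the essential observation is that the \emph{single} cofinite closed ideal $W^\perp$ annihilates all of $D/U$ at once, which forces its dual to live over the finite dimensional algebra $W^*$ and therefore to be rational. Keeping the left/right bookkeeping straight — so that the $K$-duals land among left, not right, $C^*$-modules, matching the hypothesis — is the other point requiring care; it is handled by starting from left comodules and invoking the symmetry of local finiteness.
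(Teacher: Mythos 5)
Your overall strategy---negate local finiteness via Lemma \ref{l.quasifinite}(iv), take the infinite-dimensional wedge $D=U\wedge W$ of two simple subcoalgebras, dualize a short exact sequence of comodules inside $D$, and observe that $D^*$ is cyclic and infinite dimensional, hence non-rational---is sound and genuinely different from the paper's argument (which instead produces a non-closed cofinite left ideal $K$ with $C^*/K$ finite dimensional rational and derives a contradiction with closedness). However, there is a real left/right error at the crucial step, and as written the proof breaks. From $\Delta(y)\in U\otimes C+C\otimes W$ one gets $\overline{y_1}\otimes y_2\in (C/U)\otimes W$, so $(U\wedge W)/U$ is a $W$-comodule only as a \emph{right} comodule, i.e.\ it is killed by $W^\perp$ as a \emph{left} $C^*$-module. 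But to make the $\KK$-duals land in left $C^*$-modules you must read your sequence in left comodules, i.e.\ right $C^*$-modules, and on that side the coefficient coalgebra of $(U\wedge W)/U$ is not contained in $W$ in general; consequently $(D/U)^*$ need not be killed by $W^\perp$ as a left $C^*$-module and need not be rational. Concretely, let $C$ have basis the grouplikes $a,b$ and arrows $x_n$ ($n\in\NN$) with $\Delta(x_n)=a\otimes x_n+x_n\otimes b$, and take $U=\KK a$, $W=\KK b$. Then $D=U\wedge W=C$, and $(D/U)^*\cong a^\perp\subset C^*$ satisfies $C^*\cdot b^*=a^\perp$ (since $(f*b^*)(x_n)=f(x_n)$, $(f*b^*)(b)=f(b)$, $(f*b^*)(a)=0$), so it is a cyclic, infinite-dimensional, hence non-rational left $C^*$-module. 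Your sequence $0\to (D/U)^*\to D^*\to U^*\to 0$ then has a non-rational left-hand term and yields no contradiction with closure under extensions.

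The repair is short: quotient by $W$ rather than $U$. Applying ${\rm id}\otimes\pi_W$ to $\Delta(y)\in U\otimes C+C\otimes W$ shows that $(U\wedge W)/W$ is a \emph{left} $U$-comodule, so $\bigl((U\wedge W)/W\bigr)^*$ is a left $C^*$-module annihilated by the closed cofinite ideal $U^\perp$, hence rational (indeed semisimple, being a module over the finite-dimensional simple algebra $U^*$); $W^*$ is finite dimensional rational; and $0\to \bigl((U\wedge W)/W\bigr)^*\to D^*\to W^*\to 0$ with $D^*$ cyclic and infinite dimensional gives the desired contradiction. With this one-line correction your argument is complete, and it is an attractive alternative to the paper's proof: it avoids the auxiliary comodule $X$ with $X/S\cong\bigoplus_{n\in\NN}L$ and the cardinality/closedness analysis of the ideal $K$, at the cost of having to watch exactly this sidedness of the wedge.
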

\begin{proof}
Assume $C$ is not locally finite. Then there are simple left comodules $S,L$ such that $\Hom(L,C/S)$ is infinite dimensional. Let $X\subseteq C$ be such that $X/S\cong \bigoplus\limits_{n\in\NN}L$, and let $X_n/S=\bigoplus\limits_{k\in\NN\setminus\{n\}}L$. Then $X/X_n\cong L$ and  $\bigcap\limits_nX_n=S$. Let $I=\sum\limits_nX_n^\perp\subseteq S^\perp$; note that $I^\perp=\bigcap\limits_nX_n^\perp{}^\perp=\bigcap\limits_nX_n=S$, and $X^\perp\subseteq X_n^\perp$. Note that $T^\perp/X^\perp\cong (X/T)^*\cong \prod\limits_nL^*$, which is a rational module, since it is canceled by $cf(L)^\perp$; moreover, it becomes a $cf(L)^*$-module, with $cf(L)$ a finite dimensional simple coalgebra, so it is semisimple. Therefore, every quotient of $T^\perp/X^\perp$ is rational and semisimple. Obviously, $I\neq T^\perp$, since $I/X^\perp$ is countable dimensional while $T^\perp/X^\perp$ is uncountable dimensional. Therefore, since $T^\perp/X^\perp$ is semisimple rational, we can find $I\subset K\subsetneq T^\perp$ such that $T^\perp/K$ is rational simple. Since $C^*/T^\perp$ is rational, the hypothesis shows that $C^*/K$ is rational. Therefore, $K$ is closed by the remarks in the introduction: $K=Y^\perp$. But then $Y=K^\perp\subset I^\perp=S$, so $K=Y^\perp=S^\perp$, a contradiction.
\end{proof}

We note that in \cite{NT3} a comodule $M$ such that every quotient of $M$ is quasifinite was called strictly quasi-finite. Lemma \ref{l.quasifinite} provides an interesting connection between the notion of locally finite coalgebra and a slightly weaker version of "strictly quasifinite coalgebra", where quotients only by finite dimensional comodules are quasifinite. In particular, we have

\begin{corollary}
A strictly quasifinite coalgebra is locally finite.
\end{corollary}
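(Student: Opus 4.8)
The plan is to derive this immediately from the equivalence (i)$\Leftrightarrow$(ii) in Lemma \ref{l.quasifinite}. By the definition recalled just above (following \cite{NT3}), a coalgebra $C$ is strictly quasifinite precisely when every quotient comodule of $C$ is quasifinite. In particular, for every finite dimensional right subcomodule $X$ of $C$, the quotient $C/X$ is one such quotient of $C$, and is therefore quasifinite. (If the strictly quasifinite hypothesis is imposed on the left comodule $C$ instead, the same reasoning applies via the left-comodule version of (ii), which is (v).)

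But the statement "$C/X$ is quasifinite for every finite dimensional right subcomodule $X$ of $C$" is exactly condition (ii) of Lemma \ref{l.quasifinite}, which is equivalent to (i), i.e.\ to $C$ being locally finite. This finishes the argument. The only point worth flagging is the one already signalled in the remark preceding the corollary: strictly quasifinite imposes quasifiniteness on \emph{all} quotients of $C$, whereas local finiteness requires it only for quotients by finite dimensional subcomodules, so the implication is genuinely a weakening of hypotheses and presents no real obstacle. All of the substance has already been absorbed into Lemma \ref{l.quasifinite}, which is what translates local finiteness -- a priori a statement about the wedge operation on subspaces of $C$ -- into a quasifiniteness condition on the finitely many relevant quotients.
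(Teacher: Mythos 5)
Your proposal is correct and is exactly the argument the paper intends: the corollary is stated as an immediate consequence of the remark preceding it, namely that strict quasifiniteness gives quasifiniteness of all quotients of $C$, hence in particular of quotients by finite dimensional subcomodules, which is condition (ii) of Lemma \ref{l.quasifinite} and therefore equivalent to local finiteness by (i)$\Leftrightarrow$(ii). No difference in approach.
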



We note however that the converse of this corollary does not hold. We use an example belonging to a family which provides many examples and counterexamples in coalgebras (see \cite{IInt}). These are coalgebras which are subcoalgebras of the full quiver coalgebra of a quiver and have a basis of paths; such coalgebras were called path subcoalgebras in \cite{DIN}, and have also been called monomial in literature.

\begin{example}\label{e.1}
Consider the following quiver:
$$\xygraph{ !{0;(.77,-.77):0}
!~:{@{-}|@{>}}
a(:@/^.7pc/[dl(.7)]{b_1}^{x_1}:@/^.7pc/^(.4){y_1}"a",:@/^.7pc/[ul(.7)]{b_2}^(.6){x_2}:@/^.7pc/^{y_2}"a",
[ur(.7)]{\dots},:@/^.7pc/[r]{b_n}^{x_n}:@/^.7pc/^{y_n}"a",[dr(.7)]{\dots})}
$$
and let $C$ be the $\KK$-coalgebra with basis $\{a,b_n,x_n,y_n,p_n|n\in\NN\}$ where $p_n$ is the path $p_n=x_ny_n$, as a subcoalgebra of the full path coalgebra of the above quiver. We have thus formulas for $\Delta$ and $\varepsilon$:
\begin{eqnarray*}
\Delta(a) & = & a\otimes a\\
\Delta(b_n) & = & b_n\otimes b_n\\ 
\Delta(x_n) & = & a\otimes x_n +x_n\otimes b_n \\ 
\Delta(y_n) & = & b_n\otimes y_n+y_n\otimes a \\ 
\Delta(p_n) & = & a\otimes p_n+x_n\otimes y_n+p_n\otimes b_n
\end{eqnarray*}
$\varepsilon(a)=\varepsilon(b_n)=1$; $\varepsilon(x_n)=\varepsilon(y_n)=\varepsilon(p_n)=0$. As in \cite{DIN}, we have $C_0=\KK\{a,b_n|n\in\NN\}$ (this is the $\KK$-span), $C_1=\KK\{a,b_n,x_n,y_n|n\in\NN\}$, $C=C_2$. We note then that $C/C_1\cong \bigoplus\limits_{\NN}\KK\{a\}$ as left comodules, since in $C/C_1$, the comultiplication maps $\overline{p_n}\longrightarrow a\otimes\overline{p_n}$. This shows that $C$ is not strictly quasifinite. However, we note that $C$ is locally finite. Let $X,Y$ be finite subspaces of $C$; then there are finite dimensional subcoalgebras $U,W$ which have bases of paths and such that $X\subseteq U$ and $Y\subseteq W$. For example, one can take $U$ to be the span of all paths which occur in elements in $X$ and their respective subpaths. Let $V_n=\KK\{a,b_k,x_k,y_k,p_k|k\leq n\}$. Then $V_n$ is a subcoalgebra of $C$ and $\bigcup\limits_nV_n=C$. Since $U,W$ are finite dimensional, it follows that there is some $n$ such that $U,W\subseteq V_n$. But now it is easy to see that $V_n\wedge V_n=V_n$, so $X\wedge Y\subseteq U\wedge W\subseteq V_n\wedge V_n=V_n$. This shows that $C$ is locally finite.
\end{example}

\begin{proposition}
Assume the finite dimensional left rational $C^*$-modules are closed under extensions. Let $0\rightarrow M\rightarrow P\rightarrow N\rightarrow 0$ be an exact sequence of $A$-modules such that $M,N$ are rational, $N$ is finite dimensional and $P$ is cyclic.  Then $M/JM$ is finite dimensional.
\end{proposition}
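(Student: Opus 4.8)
The plan is to bound the semisimple head $V:=M/JM$. Since $JM\subseteq M\subseteq P$, passing to $\bar P:=P/JM$ produces a short exact sequence $0\to V\to\bar P\to N\to 0$ in which $V$ is a semisimple rational submodule, $\bar P=A\bar p$ is again cyclic over $A$, and $\bar P/V\cong N$ is finite dimensional. Writing $n_0$ for the image of $\bar p$ in $N$ and $\mathfrak b=\{a\in A\mid an_0=0\}$ (a cofinite left ideal with $A/\mathfrak b\cong N$, as $n_0$ generates $N$), the map $a\mapsto a\bar p$ is a surjection of left $A$-modules $\mathfrak b\twoheadrightarrow V$. Thus $V$ is a quotient of $\mathfrak b$, and it is enough to show $V$ is finite dimensional.

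First I would extract what the hypothesis gives. Assume for contradiction $V=\bigoplus_{i\in I}S_i$ with $I$ infinite and each $S_i$ simple. For every $i$ the quotient $Q_i=\bar P/\bigoplus_{j\ne i}S_j$ sits in $0\to S_i\to Q_i\to N\to 0$, a finite dimensional extension of finite dimensional rational modules; by hypothesis each $Q_i$ is rational, so $\mathfrak c_i:=\mathrm{ann}_A(Q_i)$ is open (cofinite and closed). Concretely, fixing a vector space splitting $P=M\oplus N$, the $A$-action reads $a\cdot(m,n)=(am+\sigma_a(n),an)$ for a linear cocycle $\sigma\colon A\to\Hom(N,M)$, and rationality of $Q_i$ forces $\pi_i\circ\sigma$ to vanish on $\mathfrak c_i$, where $\pi_i\colon M\to V\to S_i$ is the projection.

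Next I would bring cyclicity to bear by pinning $\sigma$ down on $n_0$. Put $\mathfrak n=\mathrm{ann}_A(N)$, an open two-sided ideal, and $\Sigma_0=\{\sigma_b(n_0)\mid b\in\mathfrak n\}$. The cocycle identity gives $\sigma_{ab}(n_0)=a\sigma_b(n_0)$ for $b\in\mathfrak n$ (since $bn_0=0$), so $b\mapsto\sigma_b(n_0)$ is a surjection of left $A$-modules $\mathfrak n\twoheadrightarrow\Sigma_0$ and $\Sigma_0$ is an $A$-submodule of $M$. Because $M=\mathfrak b\,m_0+\{\sigma_a(n_0)\mid a\in\mathfrak b\}$ (cyclicity) with $\mathfrak b\,m_0\subseteq C^*m_0$ finite dimensional and $\mathfrak b=\mathfrak n+(\text{finite dimensional})$, one obtains $M=\Sigma_0+(\text{finite dimensional})$; hence $M/\Sigma_0$ is finite dimensional and, by right exactness of the head functor, it suffices to prove $\Sigma_0/J\Sigma_0$ finite dimensional. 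Since the surjection $\mathfrak n\twoheadrightarrow\Sigma_0$ carries $\mathfrak j\mathfrak n$ into $J\Sigma_0$ (with $\mathfrak j$ the preimage of $J=C_0^\perp$), the module $\Sigma_0/J\Sigma_0$ is a quotient of $\mathfrak n/\mathfrak j\mathfrak n$, and for each $i$ the component $\pi_i(\Sigma_0)$ is finite dimensional and vanishes whenever $\mathfrak n\subseteq\mathfrak c_i$.

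The heart of the matter, and the step I expect to be the main obstacle, is to convert these component-wise facts into finiteness of the index set $I$. Each $Q_i$ being rational yields only a separate open ideal $\mathfrak c_i$, and these are not uniformly open, so one cannot simply intersect them to conclude that $\bar P$ is rational (which, being cyclic, would then be finite dimensional and finish the argument). The plan is to exploit that $\Sigma_0$ is the image of the \emph{single} open two-sided ideal $\mathfrak n$ under the fixed cocycle, together with local finiteness of $C$ — which the hypothesis already forces, since the argument of Proposition~\ref{p.ratlf} uses only the finite dimensional case of closure under extensions (its middle term $C^*/K$ is finite dimensional) — so that $C/cf(N)$ is quasi-finite. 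Combining quasi-finiteness with the product behaviour of open ideals that the hypothesis encodes should confine the values $\sigma_b(n_0)$, $b\in\mathfrak n$, to finitely many isotypic components, forcing $\Sigma_0/J\Sigma_0$, and hence $M/JM$, to be finite dimensional. Making this confinement precise, i.e. proving that $\pi_i\circ\sigma$ is nonzero on $\mathfrak n$ for only finitely many $i$, is exactly where a single generator has to be turned into a genuine uniform bound, and is the crux of the proof.
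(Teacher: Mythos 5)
Your setup---reducing to the semisimple head $V=M/JM$, the surjection $\mathfrak b\twoheadrightarrow V$ coming from cyclicity, the rationality of each quotient $Q_i$ having a single simple summand of $V$, and the observation that local finiteness already follows from the finite-dimensional hypothesis because the middle term in the proof of Proposition \ref{p.ratlf} is finite dimensional---is sound and parallels the paper's opening moves. But there is a genuine gap, which you yourself flag: the ``confinement'' step is never carried out, and the mechanism you propose for it cannot work in general. Quasifiniteness of $C/cf(N)$ (equivalently, local finiteness) bounds each isotypic multiplicity of $V$ separately; it says nothing about the \emph{number of distinct isomorphism types} occurring in $V$. If $V=\bigoplus_{n\in\NN}S_n$ with the $S_n$ pairwise nonisomorphic, each occurring once, then every $\Hom(S_n,C/X)$ can be one-dimensional and local finiteness holds vacuously, so no uniform bound on the index set can come from that side. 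This is exactly the case your argument must, and does not, exclude.

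The paper splits the problem into precisely these two cases and uses a different device for the hard one. For pairwise nonisomorphic $S_n$: each preimage $\pi^{-1}(M_n)$, where $M_n=\bigoplus_{k\neq n}S_k$, is a closed ideal $X_n^\perp$ because $P'/M_n$ is an extension of finite-dimensional rationals, hence rational; setting $Y=\sum_nX_n$ one gets $Y/X\cong\bigoplus_nS_n^*$ and an epimorphism $X^\perp/Y^\perp=(Y/X)^*\cong\prod_nS_n\twoheadrightarrow\bigoplus_nS_n$. Since the $S_n^*$ are pairwise nonisomorphic simple comodules, $\bigoplus_nS_n^*$ embeds in $C_0\subset C$, so dualizing yields an epimorphism $C^*\twoheadrightarrow\prod_nS_n$, and composing gives $C^*\twoheadrightarrow\bigoplus_nS_n$, which is impossible because an infinite direct sum of simples is not finitely generated. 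Only in the remaining case (infinitely many copies of one simple) does local finiteness finish the argument, via $Y/X\cong\bigoplus_{\NN}S^*$ sitting inside $C/X$ with $X$ finite dimensional. To complete your proof you would need an analogue of this product-versus-sum duality argument; the uniform bound you are looking for does not follow from quasifiniteness or from the product behaviour of open ideals alone.
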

\begin{proof}
Note that if we quotient out by $JM$, we get an exact sequence $0\rightarrow M/JM\rightarrow P/JM\rightarrow N\rightarrow 0$ with the same properties, so we may in fact assume that $M$ is semisimple, and show that then $M$ is finite dimensional.\\
Let us first show that in $M$ there are only finitely many isomorphism types of simple comodules. Indeed, assume otherwise, and then again after taking another quotient we may assume we have an exact sequence $0\rightarrow \bigoplus\limits_{n
\in\NN}S_n\rightarrow P'\rightarrow N\rightarrow 0$, with $S_n$ nonisomorphic simple comodules. Also, $P$' is cyclic so there is an epimorphism $\pi:C^*\rightarrow P'$. Let $I=\pi^{-1}(\bigoplus\limits_nS_n)$; then since obviously $C^*/I\cong N$ is finite dimensional rational, we have that $I$ is closed: $I=X^\perp$, with $X$ a finite dimensional left subcomodule of $C$. Also, if $M_n=\bigoplus\limits_{k\neq n}S_k$, then we have an exact sequence $0\rightarrow S_n\rightarrow P'/M_n\rightarrow N\rightarrow 0$, and as the finite dimensional rational left $C^*$-modules are closed under extensions, we see that $P'/M_n$ is rational. Therefore, $\pi^{-1}(M_n)=X_n^\perp$, $X\subset X_n\subseteq C$. Note that $\bigcap\limits_{k\in F}M_k\not\subseteq M_n$ for any finite set $F\subset \NN$ with $n\notin F$. This shows that $(\sum\limits_{k\in F}X_k)^\perp=\bigcap\limits_{k\in F}X_k^\perp\not\subseteq M_n=X_n^\perp$, so $X_n\not\subseteq\sum\limits_{k\in F}X_k$. Also, $(X_n/X)^*\cong X^\perp/X_n^\perp\cong S_n$, so $X_n/X$ is simple. Let $Y=\sum\limits_{n\in \NN}X_n$; the previous considerations show that $Y/X\cong\bigoplus\limits_nS_n^*$. Also, $$\ker(\pi)=\pi^{-1}(0)=\pi^{-1}(\bigcap\limits_{n\in\NN}M_n)=\bigcap\limits_{n\in\NN}\pi^{-1}(M_n)=\bigcap\limits_{n\in\NN}X_n^\perp\supseteq Y^\perp.$$
We thus have an epimorphism $I/Y^\perp\rightarrow I/\ker{\pi}\cong \bigoplus\limits_{n\in\NN}S_n$, so an epimorphism $X^\perp/Y^\perp\rightarrow \bigoplus\limits_nS_n$. But $X^\perp/Y^\perp=(Y/X)^*=(\bigoplus\limits_{n\in\NN}S_n^*)^*=\prod\limits_{n\in\NN}S_n$, so we have obtained an epimorphism of left $C^*$-modules.
$$\prod\limits_{n\in\NN}S_n\rightarrow \bigoplus\limits_{n\in\NN}S_n\rightarrow 0$$
But now, since $\bigoplus\limits_n{S_n^*}$ is a direct sum of nonisomorphic simple rational left comodules, we have an embedding $\bigoplus\limits_{n\in\NN}S_n^*\hookrightarrow C_0\subset C$, and therefore, dualizing we get an epimorphism $C^*\rightarrow \prod\limits_{n\in\NN}S_n$. Combining with the above, we obtain an epimorphism $C^*\rightarrow \bigoplus\limits_{n\in \NN}S_n$; but $\bigoplus\limits_{n\in\NN}S_n$ is not finitely generated, and this is a contradiction.\\
Now, let us show that it is not possible to have infinitely many copies of the same comodule $S$ as summands of $M$. Assume otherwise, and keep the above notations, only now we will have $S_n\cong S$, for some simple right comodule $S$. As above, we obtain $Y/X\cong\bigoplus\limits_{n\in\NN}S$, and since $X$ is finite dimensional, this contradicts the hypothesis that $C$ is locally finite, which follows by Proposition \ref{p.ratlf}. \\
This ends our proof.
\end{proof}

\begin{proposition}\label{p.2}
Assume the same hypotheses (and notations) of the previous propositions hold. Then the sequence $M\supseteq JM\supseteq J^2M\supseteq \dots\supseteq J^nM\supseteq\dots $ eventually terminates: $J^nM=J^{n+1}M=\dots$, and $M/J^nM$ is finite dimensional. 
\end{proposition}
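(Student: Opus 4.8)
The plan is to bootstrap from the conclusion of the previous proposition, namely that $M/JM$ is finite dimensional, using essentially only that $M$ is rational. The one structural fact I would isolate first is that a finitely generated rational module is finite dimensional: each generator lies in a finite dimensional subcomodule, and the cyclic submodule it generates is contained in that subcomodule, so a finite generating set produces a finite dimensional module.

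First I would choose $m_1,\dots,m_r\in M$ whose images form a basis of the finite dimensional space $M/JM$, and put $M'=\sum_{i=1}^r C^*m_i$. Since $M$ is rational, each $C^*m_i$ is the finite dimensional subcomodule generated by $m_i$, so $M'$ is finite dimensional, and by the choice of the $m_i$ we have $M=M'+JM$. Next I would upgrade this to $M=M'+J^kM$ for every $k$ by the usual substitution: from $M=M'+JM$ we get $JM=JM'+J^2M\subseteq M'+J^2M$, hence $M=M'+J^2M$, and inductively $M=M'+J^kM$, using $JM'\subseteq M'$ at each step.

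The payoff is a uniform bound on the codimensions. From $M=M'+J^kM$ we obtain $M/J^kM\cong M'/(M'\cap J^kM)$, which is a quotient of $M'$, so $\dim(M/J^kM)\le \dim M'<\infty$ for all $k$. This already yields that every $M/J^kM$ is finite dimensional, in fact uniformly bounded by $\dim M'$. To extract termination I would consider the integer sequence $a_k=\dim(M/J^kM)$. Because $J^{k+1}M\subseteq J^kM$, the canonical map $M/J^{k+1}M\to M/J^kM$ is surjective, so $(a_k)$ is non-decreasing; being bounded above by $\dim M'$, it is eventually constant, say $a_n=a_{n+1}=\cdots$. For such $n$, the short exact sequence $0\to J^nM/J^{n+1}M\to M/J^{n+1}M\to M/J^nM\to 0$ gives $\dim(J^nM/J^{n+1}M)=a_{n+1}-a_n=0$, so $J^nM=J^{n+1}M$, and iterating, $J^nM=J^{n+1}M=\cdots$, with $M/J^nM$ finite dimensional.

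I expect the only delicate point to be the reduction to a finite dimensional $M'$ with $M=M'+J^kM$ for all $k$, since it is exactly this that makes the codimensions $\dim(M/J^kM)$ uniformly bounded; once that is in hand, termination is merely the observation that a non-decreasing sequence of non-negative integers bounded above must stabilize. I would also remark that $M$ itself need not be finite dimensional: the stabilized term $J^nM$ may be an infinite dimensional submodule $L$ with $JL=L$, so the content of the statement is precisely that the descending chain flattens out after finitely many steps while each quotient $M/J^kM$ stays finite dimensional.
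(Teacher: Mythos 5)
Your proof is correct, and it takes a genuinely different and more elementary route than the paper's. The paper iterates the previous proposition to show that every $M/J^nM$ is finite dimensional and every $P/J^nM$ is rational, then transports the chain into $C^*$ via an epimorphism $\pi:C^*\to P$: the preimages $\pi^{-1}(J^nM)$ are closed cofinite ideals $X_n^\perp$ for an increasing chain of finite dimensional left subcomodules $X_n\subseteq C$, and termination is forced by showing that $Y=\bigcup_n X_n$ has $Y/X$ with finite dimensional socle $X_1/X$, hence $Y/X$ is finitely cogenerated, hence $(Y/X)^*$ is a finitely generated rational module and therefore finite dimensional. You instead invoke the previous proposition only once, to get $\dim M/JM<\infty$, lift a basis of $M/JM$ to a finite dimensional submodule $M'$ (using that cyclic rational modules are finite dimensional), and run the standard Nakayama-type substitution to get $M=M'+J^kM$ for all $k$; the uniform bound $\dim(M/J^kM)\le\dim M'$ together with monotonicity of $k\mapsto\dim(M/J^kM)$ then forces stabilization, and $J^nM=J^{n+1}M$ propagates forward since $J^{n+1}M=J(J^nM)$. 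All the individual steps check out: $J(A+B)=JA+JB$, $J(JM)=J^2M$, and the surjections $M/J^{k+1}M\to M/J^kM$ give the monotonicity you need. Your argument is shorter and avoids the duality bookkeeping entirely; what the paper's version buys in exchange is the auxiliary information that each $P/J^nM$ is rational and the explicit identification of $\pi^{-1}(J^nM)$ as annihilators of subcomodules of $C$, though neither byproduct is needed for the applications of this proposition later in the paper. Your closing remark that $M$ itself need not be finite dimensional (the stabilized term may be an infinite dimensional $L$ with $JL=L$) is also consistent with how the proposition is used in Proposition \ref{p.6}.
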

\begin{proof}
By the previous proposition, we have $M/JM$ is finite dimensional. Therefore, since the finite dimensional rational are closed under extensions, $P/JM$ is rational too since it fits into the exact sequence $0\rightarrow M/JM\rightarrow P/JM\rightarrow P/M\rightarrow 0$. We can again apply the previous proposition for the sequence $0\rightarrow JM\rightarrow P\rightarrow P/JM\rightarrow 0$ and obtain that $M/J^2M$ is finite dimensional, and inductively, we obtain $M/J^nM$ is finite dimensional and also that $P/J^nM$ is rational. Let again $\pi:C^*\rightarrow P$ be an epimorphism. As before, $I=\pi^{-1}(M)$ is closed, so $I=X^\perp$, with $X$ a left subcomodule of $C$. Similarly, since $C^*/\pi^{-1}(J^nM)\cong P/J^nM$ is rational, so again $\pi^{-1}(J^nM)=X_n^\perp$, for a left finite dimensional subcomodule $X_n\subset C$. Obviously $X_n^\perp\supseteq X_{n+1}^\perp$, so $X_n\subseteq X_{n+1}$, and it suffices to show that $X_n=X_{n+1}=\dots$ from some $n$ onward. Let $Y=\bigcup\limits_{n\in\NN}X_n$; since $\ker(\pi)\subseteq X_n^\perp$, $\ker(\pi)\subseteq \bigcap\limits_{n\in\NN}X_n^\perp=(\sum\limits_{n\in\NN}X_n)^\perp=Y^\perp$. Therefore, there is an epimorphism induced by $\pi$, making the diagram commutative:
$$\xymatrix{
X^\perp\ar[r]^\pi\ar[d]^p & M\ar[dl]^{\overline{p}} \\
X^\perp/Y^\perp
}$$
We now show that the socle of $Y/X$ is $X_1/X$. For this, let $Z/X$ be a simple comodule with $Z\subset Y$; this means that $(Z/X)\cdot J=0$ or, equivalently $J(Z/X)^*=J\cdot (X^\perp/Z^\perp)=0$. But $J\cdot (Y/X)^*=J\overline{p}(M)=\overline{p}(\pi(\pi^{-1}(JM)))=\overline{p}\pi(X_1^\perp)=p(X_1^\perp)=X_1^\perp/Y^\perp$. Therefore
\begin{eqnarray*}
J\cdot(X^\perp/Z^\perp) & = & J\cdot(\frac{X^\perp/Y^\perp}{Z^\perp/Y^\perp})=\frac{J\cdot(X^\perp/Y^\perp)+Z^\perp/Y^\perp}{Z^\perp/Y^\perp}\\
& = & \frac{X_1^\perp/Y^\perp+Z^\perp/Y^\perp}{Z^\perp/Y^\perp}=0
\end{eqnarray*}
This shows that $X_1^\perp/Y^\perp\subseteq Z^\perp/Y^\perp$ and so $Z\subseteq X_1$. Now, since the socle of $Y/X$ is $X_1/X$ and is finite dimensional, there is an embedding $Y/X\hookrightarrow C^n$ for some $n$ and so, by duality, we get an epimorphism $(C^*)^n\rightarrow (Y/X)^*\rightarrow 0$, so $(Y/X)^*$ is finitely generated. Since it is rational (a quotient of $M$), it has to be finite dimensional. Therefore, the sequence $X_n\subseteq X_{n+1}\subseteq X_{n+2}\subseteq\dots$ must terminate. This ends the proof.
\end{proof}

We note that by \cite[Lemma 2.10]{CNO} we have that the set of closed cofinite left ideals of $C^*$  is closed under products if $C$ is locally finite and $X^\perp Y^\perp=(X\wedge Y)^\perp$ for all left finite dimensional subcomodules $X,Y$ of $C$. We note that with the same proof, we have that the set of two-sided closed cofinite ideals of $C^*$ is closed under products if and only if $C$ is locally finite and $U^\perp W^\perp=(U\wedge W)^\perp$ for all finite dimensional subcoalgebras $U,W$ of $C$. We see that these two conditions are in fact equivalent:

\begin{proposition}\label{p.3}
The following assertions are equivalent for a coalgebra $C$:\\
(i) The finite dimensional rational $C$-comodules are closed under extensions;\\
(ii) The set of left closed cofinite ideals (equivalently, open ideals) of $C^*$ is closed under products (of ideals), i.e. $X^\perp Y^\perp$ is closed cofinite whenever $X,Y$ are finite dimensional left subcomodules of $C$.\\
(iii) The set of two-sided cofinite closed ideals of $C^*$ is closed under products.\\
(iv) The right hand side version of (ii).
\end{proposition}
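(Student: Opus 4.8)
The plan is to prove the cycle (i) $\Rightarrow$ (iii) $\Rightarrow$ (i), and to dispatch (ii) and (iv) by elementary manipulations with the operation $X\mapsto X^\perp$ and the coefficient coalgebra $cf(\cdot)$. The dictionary I use throughout is that a finite dimensional left (or right) $C^*$-module $M$ is rational if and only if $\mathrm{ann}_{C^*}(M)$ is a closed (cofinite, two-sided) ideal: one direction is the introduction's remark that $C^*/I$ finite dimensional rational forces $I$ closed, and the other holds because a rational $M$ is annihilated by the closed cofinite ideal $cf(M)^\perp$, so $\mathrm{ann}(M)$ is an ideal containing a closed cofinite ideal, hence closed. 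With this, (iii) $\Rightarrow$ (i) is immediate: given $0\to M'\to M\to M''\to 0$ with $M',M''$ finite dimensional rational, put $I'=\mathrm{ann}(M')$, $I''=\mathrm{ann}(M'')$, both closed cofinite; since $I''M\subseteq M'$ and $I'M'=0$ we get $I'I''\subseteq\mathrm{ann}(M)$, which by (iii) is closed cofinite, so $\mathrm{ann}(M)$ is closed and $M$ is rational.

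For the equivalences among (ii), (iii), (iv): (ii) $\Rightarrow$ (iii) is trivial, since every two-sided closed cofinite ideal is a left closed cofinite ideal $U^\perp$ with $U$ a finite dimensional subcoalgebra, and the product of two such is again two-sided and, by (ii), closed cofinite. Conversely, for finite dimensional left subcomodules $X,Y$ enlarge them to the finite dimensional subcoalgebras $cf(X)\supseteq X$, $cf(Y)\supseteq Y$; then $X^\perp Y^\perp\supseteq cf(X)^\perp cf(Y)^\perp$, which is closed cofinite by (iii), and a left ideal containing a closed cofinite ideal is closed cofinite (introduction), giving (iii) $\Rightarrow$ (ii). The symmetric argument on the right yields (iii) $\Leftrightarrow$ (iv).

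It remains to prove (i) $\Rightarrow$ (iii), and I first record that (i) already forces $C$ to be locally finite, a finite dimensional refinement of Proposition \ref{p.ratlf}. If $C$ were not locally finite, then by Lemma \ref{l.quasifinite} there are simple right comodules $S,L$ with $\Hom(L,C/S)$ infinite dimensional, so the socle of $C/S$ contains $\bigoplus\limits_{\NN}L$; choose $S\subseteq X\subseteq C$ with $X/S\cong\bigoplus\limits_{\NN}L$ and $X_n\subseteq X$ with $X_n/S=\bigoplus\limits_{k\neq n}L$, so $\bigcap\limits_n X_n=S$. Dualizing, $I=\sum\limits_n X_n^\perp$ satisfies $I^\perp=\bigcap\limits_n X_n=S$, hence $\overline I=S^\perp$, while $S^\perp/I\cong(\prod\limits_{\NN}L^*)/(\bigoplus\limits_{\NN}L^*)$ is a nonzero semisimple module annihilated by the closed cofinite ideal $cf(L)^\perp$. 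Choosing $I\subseteq K\subseteq S^\perp$ with $S^\perp/K\cong L^*$ produces a cofinite ideal $K$ with $\overline K=S^\perp\neq K$, so $C^*/K$ is finite dimensional and non-rational yet sits in $0\to L^*\to C^*/K\to S^*\to 0$ with rational ends, contradicting (i). Thus $C$ is locally finite, and the two Propositions preceding Proposition \ref{p.2} (and \ref{p.2} itself) are available under hypothesis (i).

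Now fix finite dimensional subcoalgebras $U,W$, say $U\subseteq C_k$. The crux is to feed the \emph{right} module into Proposition \ref{p.2}: I work with $M''=W^\perp/U^\perp W^\perp$ instead of $U^\perp/U^\perp W^\perp$. Since $U^\perp$ is two-sided and $U^\perp M''=0$, the module $M''$ is annihilated on the left by the closed cofinite ideal $U^\perp$, hence is rational; and $U\subseteq C_k$ gives $J^{k+1}\subseteq U^\perp$, so $J^{k+1}W^\perp\subseteq U^\perp W^\perp$, i.e. $J^{k+1}M''=0$, so $M''$ has finite Loewy length. Applying Proposition \ref{p.2} to $0\to M''\to C^*/U^\perp W^\perp\to W^*\to 0$, where the middle term is cyclic and $W^*=C^*/W^\perp$ is finite dimensional rational, shows $J^nM''$ stabilizes with $M''/J^nM''$ finite dimensional; combined with $J^{k+1}M''=0$ the stable value is $0$, so $M''$ is finite dimensional. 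Then $C^*/U^\perp W^\perp$ is a finite dimensional extension of the rational modules $M''$ and $W^*$, so by (i) it is rational, whence $U^\perp W^\perp$ is closed (introduction) and cofinite, which is (iii). I expect the main obstacle to be exactly this last step: the tempting module $U^\perp/U^\perp W^\perp$ has genuinely delicate left rationality, being a quotient of the non-rational full dual $(C/U)^*$, and the device of replacing it by $W^\perp/U^\perp W^\perp$ — visibly killed on the left by the two-sided closed cofinite ideal $U^\perp$ and nilpotent for $J$ — is what makes Proposition \ref{p.2} applicable; the secondary difficulty is the local finiteness step, where one must build a \emph{finite} dimensional non-rational extension rather than the infinite dimensional one used in Proposition \ref{p.ratlf}.
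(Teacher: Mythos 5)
Your proof is correct and follows essentially the same route as the paper: (iii)$\Rightarrow$(i) via annihilators, (ii)$\Leftrightarrow$(iii) via $cf(\cdot)$ and the remark that an ideal containing a closed cofinite ideal is closed, and (i)$\Rightarrow$(iii) by applying Proposition \ref{p.2} to the sequence $0\to W^\perp/U^\perp W^\perp\to C^*/U^\perp W^\perp\to C^*/W^\perp\to 0$, using that $W^\perp/U^\perp W^\perp$ is rational of finite Loewy length. Your explicit observation that the local finiteness of $C$ (needed for Proposition \ref{p.2}) already follows from closure of \emph{finite dimensional} rationals under extensions, because the extension built in the proof of Proposition \ref{p.ratlf} is finite dimensional, is a point the paper uses implicitly; making it explicit is a small improvement, not a different argument.
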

\begin{proof}
(iii)$\Rightarrow$(ii) If $X,Y$ are finite dimensional left subcomodules of $C$ so that $X^\perp$, $Y^\perp$ are closed cofinite left ideals in $C^*$, let $U=cf(X)$, $W=cf(Y)$ which are finite dimensional subcoalgebras of $C$; then $U^\perp W^\perp\subseteq X^\perp Y^\perp$. But by hypothesis $U^\perp W^\perp=(U^\perp W^\perp)^\perp{}^\perp=(U\wedge W)^\perp$ (here we can also use \cite[Lemma 2.5.7]{DNR}) is closed and cofinite, and since $X^\perp Y^\perp\supseteq (U\wedge W)^\perp$ it follows that $X^\perp Y^\perp$ is closed and cofinite.\\
(ii)$\Rightarrow$(iii) is obvious.\\
(iii)$\Rightarrow$(i) Let $0\rightarrow M'\rightarrow M\rightarrow M''\rightarrow 0$  be an exact sequence of $C^*$-modules with $M',M''$ finite dimensional rational. Let $U=cf(M')$, $W=cf(M'')$. Then $U^\perp\cdot M'=0$, $W^\perp\cdot M''$ (in fact, $U=ann_{C^*}(M')^\perp$ by \cite[Proposition 2.5.3]{DNR}), and then, for $x\in M$ it follows that $W^\perp x\in M'$ and thus $U^\perp W^\perp x=0$, so $U^\perp W^\perp\subset ann_{C^*}(M)$. But by the hypothesis of (iii), as before we have $U^\perp W^\perp=(U\wedge W)^\perp$ and this is closed cofinite. Hence since each $x\in M$, is canceled by a closed cofinite ideal $(U\wedge W)^\perp$, it follows that $C^*x$ is rational, so $M$ is rational.\\
(i)$\Rightarrow$(iii) Let $U,W$ be finite dimensional subcoalgebras of $C$. Consider the exact sequence 
\begin{equation}\label{eq1}
0\rightarrow W^\perp/U^\perp W^\perp\rightarrow C^*/U^\perp W^\perp\rightarrow C^*/W^\perp\rightarrow 0
\end{equation}
Note that $M'=W^\perp/U^\perp W^\perp$ is rational since it is canceled by the cofinite closed ideal $U^\perp$ (and we can argue as above in (iii)$\Rightarrow$(i)). Moreover, there is some $n$ such that $U\subseteq C_{n-1}$, and then $U\subset {(J^n)}^\perp=C_{n-1}$ (use again \cite[Lemma 2.5.7]{DNR}). Then $J^n\subseteq C_{n-1}^\perp\subseteq U^\perp$ so $J^n\cdot M'=0$. We now note that we are under the assumptions of Proposition \ref{p.2}: $C^*/U^\perp W^\perp$ is cyclic, $C^*/W^\perp\cong W^*$ is finite dimensional rational and $M'=W^\perp/U^\perp W^\perp$ is rational. Therefore, it follows that $J^kM'=J^{k+1}M'=\dots$ from some $k$. But also for large $k$, $J^kM'=0$. Moreover, $M'/J^kM'$ is also finite dimensional, and so it follows that $M'$ is finite dimensional. It follows that the exact sequence in (\ref{eq1}) is a sequence of finite dimensional $C^*$-modules, with rational "ends", and by the hypothesis of (i) it follows that $M=C^*/U^\perp W^\perp$ is finite dimensional rational. Therefore, if $H=cf(M)$, then $H$ is finite dimensional and we have $H^\perp M=0$, and so $H^\perp\subseteq U^\perp V^\perp$. It then follows by the initial remarks that $U^\perp V^\perp$ is closed and cofinite.\\
Equivalence with (iv) follows by the symmetry of (i) and (iii).
\end{proof}

\section{Connection to coreflexive coalgebras}

We note now a connection with another important notion in coalgebra theory, that of coreflexive coalgebras. Recall from \cite{Tf} that a coalgebra is coreflexive if the natural map $C\rightarrow (C^*)^o$ is surjective (so an isomorphism). By the results of \cite{HR74}, there is a tight connection of this to rational modules: $C$ is coreflexive if and only if every finite dimensional left $C^*$-module is rational; by symmetry, this is equivalent to every finite dimensional right $C^*$-module being rational. Therefore we have:

\begin{proposition}\label{p.reflexive}
A coalgebra $C$ is coreflexive if and only if $C_0$ is coreflexive and the finite dimensional rational left (or, equivalently, right) $C^*$-modules are closed under extensions.
\end{proposition}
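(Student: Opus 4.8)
The plan is to run everything through the Heyneman--Radford characterization recalled just before the statement: a coalgebra $D$ is coreflexive exactly when every finite dimensional left $D^*$-module is rational. I would verify each implication against this criterion, applying it once to $C$ and once to $C_0$. The parenthetical left/right equivalence needs no separate treatment, since coreflexivity is left--right symmetric by that same criterion and the extension-closure condition is left--right symmetric by Proposition \ref{p.3}.

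For the forward implication, assume $C$ is coreflexive, so that every finite dimensional left $C^*$-module is rational. Closure under extensions is then immediate: if $0\to M'\to M\to M''\to 0$ is exact with $M',M''$ finite dimensional rational, then $M$ is finite dimensional, hence rational by hypothesis. To see that $C_0$ is coreflexive I would take any finite dimensional left $C_0^*$-module $N$ and regard it as a $C^*$-module through the projection $C^*\to C^*/J\cong C_0^*$ (recall $J=C_0^\perp$). Being finite dimensional, $N$ is rational over $C^*$; and since $JN=0$, the relation $JN=0\iff L_0N=N$ shows the associated comodule is semisimple, i.e. a $C_0$-comodule. Thus $N$ is a rational $C_0^*$-module, and $C_0$ is coreflexive by the criterion.

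For the converse, assume $C_0$ is coreflexive and that finite dimensional rational modules are closed under extensions; I must show every finite dimensional left $C^*$-module $N$ is rational. The key structural input is that $J^nN=0$ for some $n$: the chain $N\supseteq JN\supseteq J^2N\supseteq\dots$ stabilizes because $N$ is finite dimensional, and Nakayama's lemma (applied to the finitely generated stable term, with $J=\mathrm{Jac}(C^*)$) forces the stable value to be $0$. This gives a finite filtration $N\supseteq JN\supseteq\dots\supseteq J^nN=0$ whose successive quotients $J^kN/J^{k+1}N$ are annihilated by $J$, hence are finite dimensional $C_0^*$-modules, hence rational by coreflexivity of $C_0$. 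Climbing the filtration from the bottom and applying closure of finite dimensional rational modules under extensions at each step, I conclude that $N$ is finite dimensional rational. Therefore every finite dimensional $C^*$-module is rational and $C$ is coreflexive.

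The whole argument is essentially bookkeeping around the two uses of the Heyneman--Radford criterion; the only place demanding genuine care is the reduction to the layers in the converse. There one must justify that each $J$-adic quotient genuinely is a module over $C_0^*=C^*/J$, so that coreflexivity of $C_0$ can be invoked, and that the extension-closure hypothesis may be iterated finitely many times up a filtration rather than applied to a single short exact sequence. It is the finite dimensionality of $N$ that makes both points work: it guarantees the Artinian stabilization of the $J$-adic chain and, via Nakayama, its termination at $0$.
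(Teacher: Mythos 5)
Your proof is correct and follows essentially the same d\'evissage as the paper's: the forward direction's extension-closure is the identical one-line observation, and your converse replaces the paper's induction on length (whose base case is simple modules, killed by $J=\mathrm{Jac}(C^*)$, hence $C_0^*$-modules) by the equivalent finite $J$-adic filtration, with the same key step that finite dimensional modules annihilated by $J$ are rational because $C_0$ is coreflexive. The only other difference is that you prove coreflexivity of $C_0$ directly from the Heyneman--Radford criterion where the paper cites the fact that subcoalgebras of coreflexive coalgebras are coreflexive; both are valid.
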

\begin{proof}
If $C$ is coreflexive, then $C_0$ is coreflexive as a subcoalgebra of $C$ (by \cite[3.1.4]{HR74}); obviously the finite dimensional $C^*$-modules are closed under extensions since if 
\begin{equation}\label{eq2}
0\rightarrow M'\rightarrow M\rightarrow M''\rightarrow 0
\end{equation}
is an exact sequence with finite rational $M',M''$, then $M$ is finite dimensional so it is rational by the hypothesis. Conversely, we proceed as in the proof of \cite[Theorem 7.1]{DIN} by induction on the length (or dimension) of the finite dimensional module $M$ to show that finite dimensional $C^*$-modules are rational: it is true for the the simple ones since $C_0$ is coreflexive, and in general, take a sequence as in (\ref{eq2}) for some proper subcomodule $M'$ of $M$ and apply the induction hypothesis on the $C^*$-modules $M',M''$ of length smaller than $M$ to get that they are rational. Therefore, $M$ is rational as an extension of $M''$ by $M'$.
\end{proof}

Applying proposition \ref{p.3} we get:

\begin{corollary}
$C$ is coreflexive if and only if $C_0$ is coreflexive and the topology of closed cofinite (open) ideals of $C^*$ is closed under products.
\end{corollary}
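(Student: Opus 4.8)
The plan is simply to chain together the two propositions just established. First I would invoke Proposition~\ref{p.reflexive}, which asserts that $C$ is coreflexive if and only if $C_0$ is coreflexive together with the property that the finite dimensional rational left (equivalently, right) $C^*$-modules are closed under extensions. This already isolates the only nontrivial ingredient, namely the closure-under-extensions statement, and reduces the corollary to rewriting that condition in purely topological terms.

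Next I would apply Proposition~\ref{p.3}, whose equivalence (i)$\Leftrightarrow$(ii) shows precisely that the finite dimensional rational $C$-comodules are closed under extensions if and only if the set of closed cofinite (open) left ideals of $C^*$ is closed under products. Substituting this equivalence into the characterization furnished by Proposition~\ref{p.reflexive} yields verbatim the statement of the corollary.

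The argument is essentially immediate, and I do not expect any genuine obstacle, since both of the required equivalences are already in hand. The one point worth verifying for consistency is the treatment of the left/right symmetry: Proposition~\ref{p.3} establishes, via the equivalence of its conditions (i)--(iv), that the left and right versions both of the closure-under-extensions property and of the product-closure of cofinite ideals all coincide. This is exactly what is needed to reconcile the ``left or, equivalently, right'' phrasing appearing in Proposition~\ref{p.reflexive} with the single unadorned topological condition stated in the corollary, so that condition faithfully captures the extension property regardless of side.
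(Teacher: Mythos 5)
Your proposal is correct and is exactly the paper's argument: the corollary is stated immediately after Proposition~\ref{p.reflexive} with the single remark ``Applying Proposition~\ref{p.3} we get,'' i.e.\ the same chaining of the two propositions you describe. Your extra remark on the left/right symmetry via the equivalences (i)--(iv) of Proposition~\ref{p.3} is a correct and welcome clarification of a point the paper leaves implicit.
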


We also note the following: by the results of \cite{Rad} (see \cite[3.12]{Rad}), if $\KK$ is infinite, we have that a cosemisimple coalgebra $C=\coprod\limits_{i\in I}C_i$ with $C_i$ simple coalgebras is coreflexive if and only if $\coprod\limits_{i\in I}\KK=\KK^{(I)}$ is coreflexive. On the other hand, by \cite[Section 3.7]{HR74}, the coalgebra $\KK^{(I)}$ is coreflexive for every set $I$ whose cardinality is nonmeasurable (or if ${\rm card}(I)<{\rm card}(\KK)$). A cardinal $X$ is called nonmeasurable if every Ulam ultrafilter on $X$ (that is, an ultrafilter which is closed under countable intersections) is principal (i.e. equal to the set of all subsets of $X$ containing some $x_0\in X$). The class of nonmeasurable sets is closed under "usual" constructions, such as subsets, unions, products, power set, and contains the countable set, and in fact there is no known example of set wich is measurable (i.e. not nonmeasurable). Hence, it is reasonable to espect that $C_0$ is coreflexive, for any coalgebra $C$ over an infinite field $\KK$ (for finite fields, \cite[3.12]{Rad} provides a simple test). Hence, the above corollary shows that in general, if the set of simple comodules of $C$ is any set that we might "reasonably" expect, to test coreflexivity is equivalent to testing that the finite topology of ideals of $C^*$ is closed under products, equivalently, $C$ is locally finite and $U^\perp W^\perp=(U\wedge W)^\perp$.

\section{Rational Extensions and the homological Ext}

\begin{lemma}\label{l.1}
Assume that $C$ is a coalgebra such that if $0\rightarrow M'\rightarrow M\rightarrow M''\rightarrow 0$ is any exact sequence of left $C^*$-modules with $M',M''$ rational and $M''$ cyclic, then $M$ is rational. Then the left rational $C^*$-modules are closed under extensions.
\end{lemma}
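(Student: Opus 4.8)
The plan is to reduce the general statement---that every extension of rational modules by rational modules is rational---to the cyclic-quotient case guaranteed by the hypothesis, by passing to cyclic submodules. Recall that the rational modules form a closed subcategory of ${}_{C^*}\Mm$, hence are closed under subobjects and (arbitrary) sums; in particular, a module $M$ is rational precisely when $C^*x$ is rational for every $x\in M$, since then $M=\sum_{x\in M}C^*x$ is a sum of rational submodules. Thus it suffices to prove that each cyclic submodule of $M$ is rational.

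So let $0\rightarrow M'\rightarrow M\rightarrow M''\rightarrow 0$ be exact with $M',M''$ rational, write $\pi:M\rightarrow M''$ for the projection, and identify $M'$ with $\ker\pi$. Fix $x\in M$. First I would restrict attention to the cyclic submodule $C^*x\subseteq M$ and examine how it sits over the sequence. Applying $\pi$ gives $\pi(C^*x)=C^*\pi(x)$, a cyclic submodule of the rational module $M''$, hence itself rational; and $\ker\big(\pi|_{C^*x}\big)=C^*x\cap M'$, a submodule of the rational module $M'$, hence rational as well. This yields the short exact sequence
$$0\rightarrow C^*x\cap M'\rightarrow C^*x\rightarrow C^*\pi(x)\rightarrow 0,$$
in which the sub is rational and the quotient $C^*\pi(x)$ is rational \emph{and cyclic} (being generated by $\pi(x)$). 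This is exactly the configuration to which the hypothesis applies, so $C^*x$ is rational.

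Since $x\in M$ was arbitrary, $M=\sum_{x\in M}C^*x$ is a sum of rational submodules and therefore rational, which finishes the argument. The key observation---and essentially the only nontrivial point---is that passing from the full cokernel $M''$ to the image $C^*\pi(x)$ of a single cyclic submodule automatically forces the quotient term of the relevant sequence to be cyclic, which is precisely the extra condition the hypothesis requires. Consequently I do not expect a genuine obstacle here: the cyclic reduction is forced by the structure of the problem, and closure of the rational modules under subobjects and sums takes care of the rest. The only routine point to check is that the intersection $C^*x\cap M'$ and the image $C^*\pi(x)$ are rational, both of which are immediate from the fact that the rational modules constitute a closed subcategory.
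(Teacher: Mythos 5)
Your proof is correct and follows essentially the same route as the paper: the paper picks elements $x_i$ mapping onto generators of $M''$ and applies the hypothesis to $0\rightarrow M'\rightarrow M'+C^*x_i\rightarrow C^*\pi(x_i)\rightarrow 0$, whereas you apply it to the isomorphic configuration $0\rightarrow C^*x\cap M'\rightarrow C^*x\rightarrow C^*\pi(x)\rightarrow 0$; in both cases the cyclic rational quotient triggers the hypothesis and closure under sums finishes the argument.
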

\begin{proof}
Let 
$$0\rightarrow M'\rightarrow M\stackrel{\pi}{\rightarrow} M''\rightarrow 0$$
be an exact sequence with $M',M''$ rational. Write $M=M'+\sum\limits_iC^*x_i$, so that the images $\pi(x_i)$ of $x_i$ in $M''$ generate $M''$. Since $M''$ is rational, $C^*\pi(x_i)$ is finite dimensional, and so we have exact sequences $0\rightarrow M'\rightarrow M'+C^*x_i\rightarrow C^*\pi(x_i)\rightarrow 0$, which shows that $M'+C^*x_i$ is rational. Therefore, $M=\sum\limits_i(M'+C^*x_i)$ is rational.
\end{proof}

\begin{proposition}\label{p.5}
Let $0\rightarrow M'\rightarrow M\rightarrow M''\rightarrow 0$ be an exact sequence with $M',M''$ rational, and such that $M'$ has finite Loewy length (finite coradical filtration). If finite dimensional rational left $C^*$-modules are closed under extensions, then $M$ is rational. 
\end{proposition}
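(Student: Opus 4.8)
The plan is to reduce to the case where $M''$ is finite dimensional and $M$ is generated over $M'$ by a single element, and then to extract finiteness directly from Proposition \ref{p.2}, using crucially that $M'$ has finite Loewy length.

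First I would use the reduction appearing in the proof of Lemma \ref{l.1}: write $M = M' + \sum_i C^* x_i$, where the images $\pi(x_i)$ of the $x_i$ generate $M''$. For each $i$, the submodule $M' + C^* x_i$ fits into an exact sequence $0 \to M' \to M' + C^* x_i \to C^*\pi(x_i) \to 0$ in which $C^*\pi(x_i)$ is finite dimensional (as $M''$ is rational), while $M'$ is unchanged and so still of finite Loewy length. Since the rational submodule of $M$ is a submodule closed under sums, it is enough to prove that each $M' + C^* x_i$ is rational; hence I may assume outright that $M = M' + C^* x$ and that $M'' = C^*\pi(x)$ is finite dimensional.

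Now set $P = C^* x$, which is cyclic, and $M_0 = C^* x \cap M' = \ker\big(C^* x \to M''\big)$. Then $0 \to M_0 \to P \to C^*\pi(x) \to 0$ is exact, with $M_0$ rational (being a submodule of the rational module $M'$), with $C^*\pi(x)$ finite dimensional rational, and with $P$ cyclic. This is precisely the situation of Proposition \ref{p.2}, whose hypotheses are in force because finite dimensional rational left $C^*$-modules are closed under extensions. Proposition \ref{p.2} then gives that the descending chain $J^k M_0$ stabilizes and that $M_0 / J^n M_0$ is finite dimensional for large $n$.

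The decisive step, and the only place the finite Loewy length of $M'$ enters, is that $J^{\ell} M' = 0$ for some $\ell$, whence $J^{\ell} M_0 = 0$ as well. Thus for $n \ge \ell$ we have $M_0 / J^n M_0 = M_0$, so $M_0$ itself is finite dimensional. Consequently $P = C^* x$ is an extension of the finite dimensional module $C^*\pi(x)$ by the finite dimensional module $M_0$, hence finite dimensional and therefore rational; and then $M' + C^* x$ is a sum of two rational submodules, hence rational, which completes the reduction. The main (and essentially only) obstacle is recognizing the right cyclic sub-extension $0 \to C^*x \cap M' \to C^* x \to C^*\pi(x) \to 0$ on which to invoke Proposition \ref{p.2}; once finite Loewy length collapses $M_0/J^n M_0$ down to $M_0$, finiteness is automatic. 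This is exactly the device that circumvents the homological $\Ext^1$ hypothesis required for the general statement.
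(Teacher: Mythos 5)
Your proof is correct and follows essentially the same route as the paper: reduce via Lemma \ref{l.1} to a finite dimensional cyclic quotient, invoke Proposition \ref{p.2}, and use $J^{\ell}M'=0$ to collapse $M_0/J^nM_0$ to $M_0$ and conclude finite dimensionality. You are in fact slightly more careful than the paper, which applies Proposition \ref{p.2} to $0\to M'\to M\to M''\to 0$ even though the middle term there need not be cyclic; your passage to the genuinely cyclic sub-extension $0\to C^*x\cap M'\to C^*x\to C^*\pi(x)\to 0$ is the clean way to meet that hypothesis (only rephrase the final step as ``an extension of a finite dimensional rational by a finite dimensional rational, hence rational by the standing hypothesis,'' since finite dimensionality alone does not imply rationality).
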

\begin{proof}
The previous Lemma shows that we may assume $M''$ is cyclic rational and finite dimensional. By \cite[Lemma 2.2]{ILMS} we have that $J^nM'=0$ for some $n$ since $M'$ has finite Loewy length. Proposition \ref{p.2} now shows that $M'=M'/J^nM'$ is finite dimensional. Therefore, $M$ is finite dimensional since finite dimensional rationals are assumed closed under extensions.
\end{proof}

\begin{proposition}\label{p.6}
Assume $Rat({}_{C^*}\Mm)$ is not localizing, but finite dimensional rational modules are closed under extensions. Then there is an exact sequence $0\rightarrow M\rightarrow P\rightarrow S\rightarrow 0$ and such that \\
(i) $P$ is cyclic and non-rational.\\
(ii) $JM=M$ and $M$ has simple socle (and infinite Loewy length).\\
(iii) $S$ is simple finite dimensional rational.
\end{proposition}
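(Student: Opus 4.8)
The plan is to distill a single ``bad'' extension witnessing the failure of $Rat$ being a radical, and then to whittle it down to the stated normal form; the first two reductions are routine, and the real work is in the last one.

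First I would produce the sequence with simple cokernel. Since $Rat({}_{C^*}\Mm)$ is not localizing, there is a module $L$ with $Rat(L/Rat(L))\neq 0$; write $R=Rat(L)$, $\pi:L\to L/R$, and choose a simple submodule $S$ of the rational module $Rat(L/R)$. Lifting a generator $\bar x$ of $S$ to $x\in L$ and setting $P=C^*x$, we get $\pi(P)=C^*\bar x=S$, hence an exact sequence $0\to P\cap R\to P\to S\to 0$. Here $P$ is cyclic, and since the trace functor commutes with passage to submodules we have $P\cap R=Rat(P)$ and $P/Rat(P)\cong S\neq 0$; in particular $P$ is non-rational. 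Thus, setting $M:=Rat(P)$, we obtain $0\to M\to P\to S\to 0$ with $P$ cyclic non-rational, $M=Rat(P)$ and $S$ simple finite dimensional rational, which gives (i) and (iii). (Lemma \ref{l.1} yields the same reduction to a cyclic middle term.)

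Next I would extract the infinite Loewy length and the $J$-stable core. The sequence just built has $M,S$ rational, $S$ finite dimensional and $P$ cyclic, so Proposition \ref{p.2} applies: the chain $M\supseteq JM\supseteq J^2M\supseteq\cdots$ stabilizes to $M^\circ:=J^nM=J^{n+1}M=\cdots$ with $M/M^\circ$ finite dimensional and $JM^\circ=M^\circ$. Moreover $M^\circ\neq 0$, for otherwise $M$ would be finite dimensional and then $P$, being an extension of the finite dimensional rational $S$ by the finite dimensional rational $M$, would itself be finite dimensional rational by hypothesis, contradicting $P\notin Rat$. Since $J^kM^\circ=M^\circ\neq 0$ for all $k$, the module $M^\circ$ has infinite Loewy length. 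I also record that the hypothesis forces $C$ to be locally finite, by Proposition \ref{p.3} and the remark preceding it, which will be needed to control socles.

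The remaining step, arranging simultaneously $JM=M$ and that $M$ has simple socle while keeping the cokernel simple and $P$ cyclic non-rational, is where I expect the main difficulty to lie. The two natural moves fight each other and are both fragile: replacing $M$ by $M^\circ=J^nM$ forces $J$-stability but reinflates the cokernel, since $0\to M/M^\circ\to P/M^\circ\to S\to 0$ need no longer be simple, whereas projecting $M$ onto a single indecomposable injective component $E(T)$ of its socle isolates one socle type but threatens to destroy non-rationality, because non-rationality is an extension phenomenon that can vanish under quotients or submodules by infinite dimensional rational pieces. The plan is to run a descent argument inside the class of cyclic non-rational extensions $0\to M\to P\to S\to 0$ with $S$ simple and $M=Rat(P)$: to force $JM=M$, seek a simple submodule of $P/M^\circ$ transversal to $M/M^\circ$, producing a cyclic non-rational $Q\subseteq P$ with simple cokernel and $Rat(Q)\subseteq M^\circ$, and re-stabilize via Proposition \ref{p.2}, using the finite integer $\dim(M/M^\circ)$ as the descent measure; to force a simple socle, decompose $soc(M^\circ)=\bigoplus_\alpha T_\alpha$ and localize the obstruction to rationality, i.e. the nonzero class in $\Ext^1_{C^*}(S,M^\circ)$ preventing a rational complement, onto the single component $E(T_{\alpha_0})$ carrying it, invoking local finiteness to keep the relevant $\Hom$ and $\Ext$ spaces finite dimensional. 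Making these two reductions compatible, so that the terminal configuration has $JM=M$, simple socle and simple cokernel at once, is the crux; I expect the careful tracking of this obstruction class, rather than any single numerical inequality, to be where the substantive argument is.
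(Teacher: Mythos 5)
Your first two reductions are fine, and the first is actually a slicker route to the simplicity of $S$ than the paper's: the paper invokes Lemma \ref{l.1} and then runs a minimality-of-length argument on the cokernel, whereas you get a simple cokernel for free by choosing a simple subcomodule of $Rat(L/Rat(L))$ and lifting a generator. The second reduction (stabilization of $J^nM$ via Proposition \ref{p.2}, non-vanishing of the stable core, infinite Loewy length) also matches the paper.

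However, the third step is a genuine gap: you describe a ``plan'' involving a descent on $\dim(M/M^\circ)$ and the tracking of an obstruction class in $\Ext^1_{C^*}(S,M^\circ)$, and you explicitly leave the ``crux'' unresolved. In fact the difficulty you anticipate does not arise, because the correct move is not to replace $M$ by $M^\circ=J^nM$ (which, as you note, would change the cokernel) but to quotient $P$ by a \emph{finite dimensional} $N\subseteq M$ chosen so that $J^nM+N=M$; such an $N$ exists because $M/J^nM$ is finite dimensional and $M$, being rational, is a sum of finite dimensional subcomodules. Since $N\subseteq M$, the cokernel of $0\to M/N\to P/N\to S\to 0$ is still $S$, and $J(M/N)=(J^{n+1}M+JN+N)/N=(J^nM+N)/N=M/N$ because $J^{n+1}M=J^nM$ and $JN\subseteq N$. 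The simple socle is then obtained by a second quotient: pick a simple $L\subseteq M/N$ and, by Zorn, a subcomodule $X\subseteq M/N$ maximal with $L\cap X=0$, so that $L$ is essential in $M'':=(M/N)/X$; again the cokernel is untouched, cyclicity passes to quotients, and $JM''=M''$ is inherited. Finally, non-rationality of the resulting $P''$ is automatic and requires no bookkeeping of extension classes: a cyclic rational module is finite dimensional, which would force $M''$ to be finite dimensional, contradicting $JM''=M''\neq 0$ (which gives infinite Loewy length by \cite[Lemma 2.2]{ILMS}). So your proof is incomplete as written, though the missing step is an elementary pair of quotients rather than the homological argument you were bracing for.
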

\begin{proof}
By Lemma \ref{l.1} we can find such a sequence with $P$ cyclic and $S$ finite dimensional. Let us consider such a sequence with $S$ of minimal length (or dimension). Then we see that $S$ is simple. Indeed, if $S$ is not, then let $x\in P\setminus M$ such that $L=(C^*x+M)/M$ is simple, $L\subsetneq S$. If $C^*x$ is rational, then we get an exact sequence $0\rightarrow M+C^*x/C^*x\rightarrow P/C^*x\rightarrow P/(M+C^*x)\rightarrow 0$, which has the properties: $P/(M+C^*x)$ has positive length smaller than $S$, $P/C^*x$ is cyclic and not rational (if it were rational, $P$ would be rational since then $P/C^*x$ and $C^*x$ would be finite dimensional rational). This contradicts the minimality choice. Therefore, $C^*x$ is not rational, and we get an exact sequence $0\rightarrow M\cap C^*x\rightarrow C^*x\rightarrow C^*x/(C^*x\cap M)\cong L\rightarrow 0$, with $M\cap C^*x$ rational and $L$ simple. Since the initial $S$ was of minimal possible length of all sequences with this feature, $S$ is simple.\\
Now, note that $M$ must have infinite Loewy length, since otherwise $P$ would be  rational by Proposition \ref{p.5}. Also, by Propsition \ref{p.2}, we can find $n$ such that $J^nM=J^{n+1}M$ and $M/J^nM$ is finite dimensional. Since $M$ is rational, we can find a finite dimensional subcomodule $N$ of $M$ such that $J^nM+N=M$. Let $M'=M/N$, $P'=P/N$, so that we have an exact sequence $0\rightarrow M'\rightarrow P'\rightarrow S\rightarrow 0$. Note that $JM'=J(J^nM+N)/N=(J^{n+1}M+JN+N)/N=(J^nM+N)/N=M'$. Let $L$ be a simple subcomodule of $M'$ and $X$ be a maximal subcomodule of $M'$ such that $L\cap X=0$ (such a subcomodule can be found, for example, by Zorn's Lemma). It is then not difficult to see that $L=L+X/X \hookrightarrow M'/X$ is an essential subcomodule of $M''=M'/X$. Moreover, $JM''=J(M'/X)=(JM'+X)/X=M'/X=M''$. This also shows that $M''$ has infinite Loewy length, since $M''\neq 0$ (for example, again by \cite[Lemma 2.2]{ILMS}). Then, if $P''=P'/X$, we have $P''/M''\cong P'/M'\cong P/M=S$, and so the exact sequence $0\rightarrow M''\rightarrow P''\rightarrow S\rightarrow 0$ has the required properties: $JM''=M''$, $M''$ has simple socle and infinite Loewy length, $S$ is simple and $P''$ is cyclic and nonrational (if $P''$ is rational, it is finite dimensional and in this case so is $M''$, which is not possible).
\end{proof}

\begin{proposition}\label{p.7}
Let $0\rightarrow M\rightarrow P\rightarrow S\rightarrow 0$ be a short exact sequence of left $C^*$-modules with $M,S$ rational modules, and $S$ a simple module. Then there is an extension $0\rightarrow E(M)\rightarrow \overline{P}\rightarrow S\rightarrow 0$, where $E(M)$ is the injective hull of $M$ as right. Moreover 
if $P$ is not rational, then $\overline{P}$ is not rational either, and the last sequence is not split.
\end{proposition}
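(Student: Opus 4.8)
The plan is to construct $\overline{P}$ as the pushout of the given inclusion $M\hookrightarrow P$ along the injective envelope $M\hookrightarrow E(M)$, formed inside the Grothendieck category ${}_{C^*}\Mm$ of left $C^*$-modules. Concretely I would set $\overline{P}=(P\oplus E(M))/K$, where $K=\{(\iota(m),-j(m))\mid m\in M\}$, with $\iota\colon M\hookrightarrow P$ the map from the given sequence and $j\colon M\hookrightarrow E(M)$ the injective hull of $M$ taken in the category of right $C$-comodules; note that $E(M)$, being an injective right $C$-comodule, is itself a rational module, even though $P$ need not be. This produces the commutative pushout square together with the two structure maps $q\colon E(M)\to\overline{P}$ and $p\colon P\to\overline{P}$, and it is essential that we allow ourselves to work in all of ${}_{C^*}\Mm$ since $P$ is possibly non-rational.

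First I would record the two standard pushout facts that drive everything. Because $\iota$ is a monomorphism, the pushout map $q\colon E(M)\to\overline{P}$ is again a monomorphism and $\Coker(q)\cong\Coker(\iota)=S$; this is exactly the desired short exact sequence $0\to E(M)\to\overline{P}\to S\to 0$. Dually, computing $\ker(p)=\iota(\ker j)$ and using that $j$ is injective, I get $\ker(p)=0$, so $p\colon P\to\overline{P}$ is a monomorphism. Thus $P$ embeds into $\overline{P}$ as the submodule $p(P)\cong P$.

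For the \emph{moreover} clause, assume $P$ is not rational. If $\overline{P}$ were rational, then its submodule $p(P)\cong P$ would be rational too, since rational modules are closed under subobjects (they form a closed subcategory of ${}_{C^*}\Mm$); this contradicts the non-rationality of $P$, so $\overline{P}$ is not rational. Finally, if the sequence $0\to E(M)\to\overline{P}\to S\to 0$ split, then $\overline{P}\cong E(M)\oplus S$ would be a finite direct sum of rational modules ($E(M)$ is rational as an injective comodule, and $S$ is rational by hypothesis), hence rational — contradicting what was just shown. Therefore the sequence does not split. The only point demanding genuine care is the pushout bookkeeping — verifying that $E(M)$ appears as the kernel term with cokernel $S$, and that $P$ embeds via $p$ — which is where I would concentrate the effort; the non-rationality and non-splitting assertions are then immediate appeals to closure of the rational subcategory under subobjects and finite direct sums.
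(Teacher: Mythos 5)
Your proposal is correct and follows essentially the same route as the paper: form $\overline{P}$ as the pushout of $M\hookrightarrow P$ along $M\hookrightarrow E(M)$ in ${}_{C^*}\Mm$, use that the pushout of a monomorphism is a monomorphism with the same cokernel $S$, observe that $P$ embeds in $\overline{P}$ so non-rationality passes up, and note that a splitting would force $\overline{P}\cong E(M)\oplus S$ to be rational. The only cosmetic difference is that you verify the pushout facts via the explicit model $(P\oplus E(M))/K$, whereas the paper phrases them as $E(M)+P=\overline{P}$ and $E(M)\cap P=M$ inside $\overline{P}$; the content is identical.
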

\begin{proof}
We take $\overline{P}$ to be the pushout in the category of left $C^*$-modules of the following diagram:
$$\xymatrix{
& E(M)\ar@{^{(}.>}[dr] &\\
M\ar@{^{(}->}[ur]\ar@{^{(}->}[dr] & & \overline{P} \\
& P\ar@{^{(}.>}[ur] &
}$$
By the properties (or the construction) of the push-out, we have $E(M)+P=\overline{P}$, $E(M)\cap P=M$; this shows that $\overline{P}/E(M)=(E(M)+P)/E(M)=P/P\cap E(M)=P/M\cong S$ and this gives us the required extension. 
If $P$ is not rational, then $\overline{P}$ is not rational either, as it contains $P$. Obviously, the sequence $0\rightarrow E(M)\rightarrow \overline{P}\rightarrow S\rightarrow 0$ is not split, since otherwise, $\overline{P}$ would be rational. 
\end{proof}

We note a few equivalent interpretations of the condition in the above proposition. 

\begin{proposition}\label{p.8}
Let $S$ be a left rational $C^*$-module, $T=S^*$, and $E$ an injective right $C$-comodule. The following assertions are equivalent:\\
(i) $\Ext^1(S,E)=0$;\\
(i') Any sequence $0\rightarrow E\rightarrow \overline{P}\rightarrow S\rightarrow 0$ splits.\\
(i'') $E$ is injective in the localizing subcategory (i.e. closed under extensions) of ${}_{C^*}\Mm$ generated by $Rat({}_{C^*}\Mm)$.\\
(ii) For any $f\in\Hom_{C^*}(T^\perp,E)$, ${\rm Im}(f)$ is finite dimensional.\\
(iii) If $M=T^\perp_{E(T)}$ is the maximal submodule of $E(T)^*$ (which is unique by \cite[Lemma 1.4]{I2}), any $f\in\Hom_{C^*}(M,E)$ has finite dimensional image.
\end{proposition}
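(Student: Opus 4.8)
The plan is to prove a web of implications, treating $(i)\Leftrightarrow(i')\Leftrightarrow(i'')$ as essentially formal and concentrating the real work on the equivalence of $(i)$ with the finiteness conditions $(ii)$ and $(iii)$. For $(i)\Leftrightarrow(i')$ I would simply invoke the Yoneda description of $\Ext^1_{C^*}(S,E)$ as the group of equivalence classes of extensions $0\rightarrow E\rightarrow \overline{P}\rightarrow S\rightarrow 0$ in ${}_{C^*}\Mm$, which vanishes precisely when every such sequence splits. For $(i')\Leftrightarrow(i'')$, let $\Ll$ be the smallest subcategory of ${}_{C^*}\Mm$ containing $Rat({}_{C^*}\Mm)$ and closed under extensions; since both $S$ and the injective comodule $E$ lie in $Rat({}_{C^*}\Mm)\subseteq\Ll$ and $\Ll$ is closed under extensions, every extension $\overline{P}$ of $S$ by $E$ again lies in $\Ll$. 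Hence testing injectivity of $E$ inside $\Ll$ against the quotients arising from $Rat$ reduces exactly to splitting these sequences, giving $(i')\Leftrightarrow(i'')$.

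The bridge to $(ii)$ is the projective presentation $0\rightarrow T^\perp\rightarrow C^*\rightarrow S\rightarrow 0$, where I use the identification $C^*/T^\perp\cong T^*\cong S$ (here $T=S^*$ is a finite dimensional comodule sitting inside $C$). Applying $\Hom_{C^*}(-,E)$ and using that $C^*$ is free, I get $\Ext^1_{C^*}(S,E)\cong \Coker\big(\Hom_{C^*}(C^*,E)\rightarrow \Hom_{C^*}(T^\perp,E)\big)$, where the map is restriction along $T^\perp\hookrightarrow C^*$. Since any $\widehat{f}\in\Hom_{C^*}(C^*,E)$ is of the form $x\mapsto x\widehat{f}(1)$, its restriction to $T^\perp$ has image inside the finite dimensional module $C^*\widehat{f}(1)$ (finite because $E$ is rational); this immediately gives $(i)\Rightarrow(ii)$, as $\Ext^1=0$ forces every $f\in\Hom_{C^*}(T^\perp,E)$ to be such a restriction. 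For $(ii)\Leftrightarrow(iii)$ I would use that $T$ is (the dual of) a finite dimensional comodule, so its injective hull $E(T)$ is a direct summand of $C$ as a comodule, say $C=E(T)\oplus C'$; dualizing the induced decomposition of $C/T$ yields $T^\perp\cong(C/T)^*\cong (E(T)/T)^*\oplus C'^*=M\oplus C'^*$. Because $C'^*$ is a direct summand of the free module $C^*$, every map out of $C'^*$ extends to $C^*$ and hence has finite dimensional image automatically; therefore the finiteness condition on $\Hom_{C^*}(T^\perp,E)$ is equivalent to the one on $\Hom_{C^*}(M,E)$, i.e. $(ii)\Leftrightarrow(iii)$.

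The substance is the reverse implication $(ii)\Rightarrow(i)$ (equivalently $(iii)\Rightarrow(i)$): I must show that a map $f\colon T^\perp\rightarrow E$ with finite dimensional image $V=\im(f)$ extends to $C^*$, i.e. represents $0$ in the cokernel above. The intended mechanism is a descent to a finite dimensional rational situation followed by injectivity of $E$: locate a finite dimensional rational subcomodule $Y$ of $C$ with $T\subseteq Y$ such that $f$ factors as $T^\perp\twoheadrightarrow T^\perp/Y^\perp\cong (Y/T)^*\xrightarrow{\overline{f}}V\hookrightarrow E$; then the inclusion $(Y/T)^*\hookrightarrow Y^*$ is a monomorphism of finite dimensional \emph{rational} comodules ($Y^*$ is rational since $Y\subseteq C$ is finite dimensional), so injectivity of $E$ as a comodule extends $\overline{f}$ to $Y^*\rightarrow E$, and precomposing with $C^*\twoheadrightarrow C^*/Y^\perp\cong Y^*$ produces the required extension of $f$. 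The main obstacle is precisely the existence of such a finite dimensional $Y$, i.e. showing that $f$ descends through a \emph{closed} cofinite submodule (equivalently, that a module map into the finite dimensional $V$ is continuous for the finite topologies): abstract $C^*$-module maps into finite dimensional modules need not have closed kernel, and a finite dimensional but non-rational quotient of $C^*$ would obstruct the descent. This is where the finiteness of \emph{all} images in $(ii)$, the injectivity of $E$, and the structural role of the specific module $M=T^\perp_{E(T)}$ (via \cite[Lemma 1.4]{I2}) must be combined; I expect this continuity/closedness step --- and not the homological bookkeeping --- to be the crux of the proof.
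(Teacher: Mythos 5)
Your handling of (i)$\Leftrightarrow$(i')$\Leftrightarrow$(i''), of (i)$\Rightarrow$(ii) via the presentation $0\rightarrow T^\perp\rightarrow C^*\rightarrow S\rightarrow 0$ and the finiteness of cyclic rational modules, and of (ii)$\Leftrightarrow$(iii) via the splitting $T^\perp\cong M\oplus H^*$ induced by $C=E(T)\oplus H$, all coincide with the paper's argument and are fine. The problem is (ii)$\Rightarrow$(i), which you do not actually prove: you describe a strategy and then explicitly defer its key step. That step --- producing a finite dimensional subcomodule $Y\supseteq T$ of $C$ with $Y^\perp\subseteq\ker(f)$, i.e.\ showing that $f$ is continuous for the finite topology --- is not a technicality; it is essentially the ``products of open ideals are open'' issue of Proposition \ref{p.3}, which is \emph{not} among the hypotheses of this proposition. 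Concretely, all you get for free is $W^\perp T^\perp\subseteq\ker(f)$ with $W=cf({\rm Im}(f))$, and without local finiteness or closure of open ideals under products there is no reason for $\ker(f)$ to contain a closed cofinite submodule. So the route you propose cannot be completed from the stated hypotheses.

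The paper sidesteps this entirely. Given $f$ with $K=\ker(f)$ and $F={\rm Im}(f)$ finite dimensional, it makes no attempt to show $K$ is closed; it forms the finite dimensional module $C^*/K$ (finite dimensional because both $T^\perp/K\cong F$ and $C^*/T^\perp\cong T^*$ are), observes that it contains the rational module $F\subseteq E$, and invokes the relative injectivity of the injective comodule $E$ against embeddings of rational modules into finite dimensional, not necessarily rational, $C^*$-modules (\cite[Theorem 2.4.17]{DNR}) to extend $F\rightarrow E$ to some $v\colon C^*/K\rightarrow E$; composing $v$ with $C^*\rightarrow C^*/K$ extends $f$ to $C^*$, which makes the connecting map to $\Ext^1(S,E)$ zero. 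The point you were missing is exactly this stronger relative injectivity of injective comodules: it renders the continuity/closedness question you identified as ``the crux'' irrelevant, whereas on your route that question is a genuine and, under these hypotheses, unresolvable obstruction.
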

\begin{proof}
The equivalence of (i)-(i'') is obvious.\\
If we write $C=E(T)\oplus H$, we note that $C^*=E(T)^*\oplus H^*$ so then $T^\perp=M\oplus H^*$. Since $H^*$ is cyclic (as a direct summand of $C^*$), we get that morphisms $f\in\Hom_{C^*}(H^*,E)$ have finite images. This shows the equivalence of (ii) and (iii).\\
(i)$\Rightarrow$(ii) The condition yields an exact sequence $0\rightarrow \Hom_{C^*}(S,E)\rightarrow\Hom_{C^*}(C^*,E)\rightarrow\Hom_{C^*}(T^\perp,E)\rightarrow \Ext^1(S,E)=0$, therefore any $f$ in the following diagram extends to some $g$:
$$\xymatrix{
0\ar[r] & \ar[d]_f T^\perp \ar[r] & C^*\ar@{..>}[dl]^g \\
& E &
}$$
Therefore, since ${\rm Im}(g)$ is cyclic rational, it is finite dimensional, and ${\rm Im}(f)\subseteq {\rm Im}(g)$.\\
(ii)$\Rightarrow$(i) Let $f\in\Hom_{C^*}(T^\perp,C^*)$, $K=\ker(f)$, $F={\rm Im}(f)$; we have the commutative diagram:
$$\xymatrix{
0\ar[r] & \ar@{->>}[d]_{\overline{f}} T^\perp \ar[r]^\sigma & C^*\ar@{->>}[d]^{\overline{g}} \\
0\ar[r] & F\ar[d]_{u}\ar[r]^{\overline{\sigma}} & C^*/\sigma(K)\ar@{..>}[dl]^{v} \\
& E &
}$$
where $f=u\circ \overline{f}$, $\overline{g}$ is the canonical projection and $\overline{\sigma}$ is induced by $\sigma$. Since $C^*/T^\perp=T^*$ and $T^\perp/K$ are finite dimensional, the second row of the diagram consists of finite dimensional comodules; since $E$ is injective, the diagram extends with some $v$ such that $v\circ \overline{\sigma}=u$ (e.g. by \cite[Theorem 2.4.17]{DNR}). Hence, $g=v\circ\overline{g}$ extends $f$. This gives an exact sequence 
$$0\rightarrow \Hom_{C^*}(S,E)\rightarrow\Hom_{C^*}(C^*,E)\stackrel{\sigma^*}{\rightarrow}\Hom_{C^*}(T^\perp,E)\stackrel{0}{\rightarrow} \Ext^1(S,E)\stackrel{0}{\rightarrow} \Ext^1(S,C^*)=0$$
Since $\sigma^*$ is surjective, it is standard to see that the above sequencewe yields $\Ext^1(S,E)=0$.
\end{proof}

\begin{corollary}
Let $C$ be a coalgebra and assume finite dimensional left rational modules are closed under extensions. Then the following are equivalent:\\
(i) $Rat({}_{C^*}\Mm)$ is closed under extensions (i.e. $Rat$ is a torsion functor).\\
(ii) $\Ext^1(S,E)=0$ for every simple right $C$-comodule $S$ and every injective indecomposable right $C$-comodule $E$.\\
(iii) $\Ext^1(S,C)=0$, for all simple right $C$-comodules $S$. \\
(iii)' $\Ext^1(C_0,C)=0$ as left $C^*$-modules.\\
(iv) For every simple left $C$-comodule $T$, every injective indecomposable right comodule $E$ and any $f\in\Hom_{C^*}(T^\perp,E)$, $\im(f)$ is finite dimensional.\\
(v) There is no exact sequence of left $C^*$-modules $0\rightarrow M\rightarrow P\rightarrow S\rightarrow 0$ with $M$ rational with simple socle and $JM=M$, $S$ simple rational and $P$ cyclic.
\end{corollary}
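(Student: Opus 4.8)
The plan is to prove the corollary by establishing a single cycle of implications
$(iii)\Rightarrow(ii)\Rightarrow(v)\Rightarrow(i)\Rightarrow(iii)$ together with the two side equivalences $(iii)\Leftrightarrow(iii)'$ and $(ii)\Leftrightarrow(iv)$, the latter being essentially bookkeeping on top of the earlier results. The point of routing the argument through a cycle is that it lets me avoid ever proving directly that $\Ext^1(S,-)$ commutes with the (possibly infinite) decomposition $C=\bigoplus_j E(S_j)^{(n_j)}$ of ${}_{C^*}C$ into injective indecomposable right comodules. Such a commutation would require finiteness hypotheses on a presentation of the simple module $S$ that are not available here (a simple rational $S=C^*/L$ need not be finitely presented, as $L$ need not be finitely generated). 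Instead I will only use the trivial retract half of that comparison, for $(iii)\Rightarrow(ii)$, and recover the other half by going around the cycle.

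For the routine directions: $(iii)\Leftrightarrow(iii)'$ holds because, as a right comodule, ${}_{C^*}C_0$ is the socle $\bigoplus_i S_i^{(n_i)}$ of $C$, so by additivity of $\Ext^1$ in its first argument one has $\Ext^1_{C^*}(C_0,C)\cong\prod_i\Ext^1_{C^*}(S_i,C)$ (the product taken over the simple right subcomodules with multiplicity), which vanishes exactly when each $\Ext^1(S_i,C)$ does, i.e. exactly when $(iii)$ holds. The equivalence $(ii)\Leftrightarrow(iv)$ is simply Proposition \ref{p.8}, part $(i)\Leftrightarrow(ii)$, quantified over all simple right comodules $S$ (equivalently all simple left comodules $T=S^*$) and all injective indecomposable right comodules $E$. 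For $(i)\Rightarrow(iii)$ I use that ${}_{C^*}C$ is rational and is injective as a right $C$-comodule: given any extension $0\to C\to X\to S\to 0$ of left $C^*$-modules with $S$ a simple right comodule, $(i)$ forces $X$ to be rational, so the whole sequence lies in $\Mm^C$ and splits by injectivity of $C$; hence every such extension splits and $\Ext^1_{C^*}(S,C)=0$. Finally $(iii)\Rightarrow(ii)$ is immediate, since every injective indecomposable right comodule $E\cong E(S')$ is a direct summand of ${}_{C^*}C$, whence $\Ext^1(S,E)$ is a direct summand of $\Ext^1(S,C)=0$.

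The substantive content is carried by the two remaining implications, which import Propositions \ref{p.6} and \ref{p.7}. For $(ii)\Rightarrow(v)$ I argue contrapositively: suppose a sequence $0\to M\to P\to S\to 0$ as in $(v)$ exists. First, $P$ cannot be rational, for if it were, then being cyclic it would be finite dimensional, forcing $M$ finite dimensional rational and hence $J^nM=0$ for some $n$, contradicting $JM=M$ with $M\neq0$ (its socle is simple). Since $M$ has simple socle, $E(M)$ is injective indecomposable, and Proposition \ref{p.7} produces a non-split extension $0\to E(M)\to\overline{P}\to S\to 0$, so that $\Ext^1(S,E(M))\neq0$, contradicting $(ii)$. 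For $(v)\Rightarrow(i)$ I again argue contrapositively, and here the standing hypothesis that finite dimensional rationals are closed under extensions is essential: if $Rat({}_{C^*}\Mm)$ were not closed under extensions, Proposition \ref{p.6} manufactures exactly a sequence of the kind forbidden by $(v)$, namely with $P$ cyclic non-rational, $JM=M$, $M$ of simple socle and $S$ simple rational. Thus $(v)$ forces $(i)$, closing the cycle.

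I expect the main obstacle to lie not in the corollary's bookkeeping but in matching the hypotheses of Propositions \ref{p.6} and \ref{p.7} precisely to condition $(v)$: specifically, verifying that the sequence produced by Proposition \ref{p.6} has $M$ of \emph{simple} socle, which is what guarantees $E(M)$ is indecomposable and hence that the resulting nonzero $\Ext$ class lands among the injective indecomposables quantified in $(ii)$; and, dually, checking in $(ii)\Rightarrow(v)$ that $P$ is forced to be non-rational, since that is exactly the input Proposition \ref{p.7} requires in order to conclude non-splitness.
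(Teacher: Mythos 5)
Your proof is correct and follows essentially the same route as the paper: the substantive implications (v)$\Rightarrow$(i) and (ii)$\Rightarrow$(v) are delegated to Propositions \ref{p.6} and \ref{p.7} exactly as in the text, (ii)$\Leftrightarrow$(iv) to Proposition \ref{p.8}, and the splitting argument for (i)$\Rightarrow$(iii) (via injectivity of the rational module $C$) is the paper's own argument for (i)$\Rightarrow$(ii) and (iii), with (iii)$\Rightarrow$(ii) by passage to a direct summand in both cases. Your added care in checking that the sequence in (v) forces $P$ non-rational, and your remark on why one should not try to prove (ii)$\Rightarrow$(iii) by commuting $\Ext^1(S,-)$ with the infinite decomposition of $C$, are sound but do not change the approach.
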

\begin{proof}
(v)$\Rightarrow$(i) follows from Proposition \ref{p.6}\\ 
(i)$\Rightarrow$(ii) and (iii) follows since if $0\rightarrow E\rightarrow \overline{P}\rightarrow S\rightarrow 0$ is an exact sequence, (i) implies that $\overline{P}$ is rational and so the sequence splits since $E$ is an injective rational comodule. \\
(ii)$\Rightarrow$(v) follows by Proposition \ref{p.7}.\\
(iv)$\Leftrightarrow$(ii) is contained in Proposition \ref{p.8}\\
(iii)$\Rightarrow$(ii) is obvious since $C=E\oplus H$ for some $H$. \\
(iii)$\Leftrightarrow$(iii)' is obvious.
\end{proof}

We can now proceed with the main characterization of the "rationals closed under extension" property.

\begin{theorem}\label{t.1}
Let $C$ be a coalgebra. Then the left rational $C^*$-modules are closed under extensions if and only if the following two conditions hold:\\
(i) products of closed cofinite ideals are closed; (equivalently, $C$ is locally finite and for any two finite dimensional subcoalgebras $V,W$ of $C$, $V^\perp W^\perp=(V\wedge W)^\perp$.)\\
(ii) $\Ext^1(S,E)=0$, for every simple right $C$-comodule $S$ and injective right $C$-comodule $E$ (equivalently, $\Ext^1(S,C)=0$, for all such $S$, or $\Ext^1(C_0,C)=0$ as left $C^*$-modules). 
\end{theorem}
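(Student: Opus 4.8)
The plan is to deduce this characterization by splicing together two results already in place: Proposition~\ref{p.3}, which shows that condition (i) is exactly equivalent to the finite dimensional rational $C^*$-modules being closed under extensions, and the Corollary immediately preceding this theorem, which---\emph{under the standing hypothesis that the finite dimensional rationals are closed under extensions}---shows that all rational modules being closed under extensions is equivalent to the homological condition (ii). So the strategy is to arrange that finite dimensional closure is available as a ``hinge'' in both directions, and then simply read off the two conditions.

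For the forward implication, I would assume that the left rational $C^*$-modules are closed under extensions and first observe that this forces closure for the finite dimensional ones: given $0\rightarrow M'\rightarrow M\rightarrow M''\rightarrow 0$ with $M',M''$ finite dimensional rational, the middle term $M$ is finite dimensional by additivity of dimension, and rational by the global hypothesis. Proposition~\ref{p.3} then yields condition (i). With finite dimensional closure now secured, the hypothesis of the preceding Corollary is met, so its equivalence between ``$Rat({}_{C^*}\Mm)$ closed under extensions'' and the $\Ext^1$-vanishing conditions applies; since we are assuming the former, condition (ii) follows, in any of its equivalent forms $\Ext^1(S,E)=0$, $\Ext^1(S,C)=0$, or $\Ext^1(C_0,C)=0$.

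For the converse, I would begin from (i). By Proposition~\ref{p.3} this again gives that the finite dimensional rationals are closed under extensions, which is precisely the hypothesis needed to invoke the preceding Corollary. Condition (ii) of the theorem is literally the homological condition appearing there, so the Corollary's equivalence returns that $Rat({}_{C^*}\Mm)$ is closed under extensions, as desired.

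I expect no genuine obstacle here, since the theorem is a repackaging of the preceding development; the only thing requiring care is bookkeeping. Specifically, I would verify that the parenthetical reformulations are exactly the ones already proven---the ``locally finite together with $V^\perp W^\perp=(V\wedge W)^\perp$'' form of (i) is the remark just before Proposition~\ref{p.3}, combined with the left/right/two-sided equivalences established within Proposition~\ref{p.3} itself, and the passage between $\Ext^1(S,E)=0$ for all injective $E$, $\Ext^1(S,C)=0$, and $\Ext^1(C_0,C)=0$ is the chain (ii)$\Leftrightarrow$(iii)$\Leftrightarrow$(iii)$'$ of the Corollary---and, crucially, that finite dimensional closure is confirmed \emph{before} the Corollary is cited in each direction, since that closure is its standing hypothesis.
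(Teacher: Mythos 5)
Your proposal is correct and is essentially the paper's own (implicit) proof: the paper states Theorem \ref{t.1} without a separate argument precisely because it is the assembly of Proposition \ref{p.3} (condition (i) $\Leftrightarrow$ finite dimensional rationals closed under extensions) with the Corollary immediately preceding it, exactly as you describe, including the observation that global closure trivially implies finite dimensional closure in the forward direction. The only bookkeeping point worth a word is passing from ``injective indecomposable $E$'' in the Corollary to arbitrary injective $E$ in the theorem, which is handled through the equivalent form $\Ext^1(S,C)=0$.
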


We note that $\Ext^1(S,E)$ (or $\Ext^1(C_0,C)$) can be computed from the long exact sequence of homology:

$$0\rightarrow \Hom_{C^*}(S,E)\rightarrow\Hom_{C^*}(C^*,E)\stackrel{}{\rightarrow}\Hom_{C^*}(T^\perp,E)\stackrel{}{\rightarrow} \Ext^1(S,E)\stackrel{}{\rightarrow} \Ext^1(S,C^*)=0$$

where $T=S^*$. So $\Ext^1(S,E)=\frac{\Hom(T^\perp,E)}{\hookrightarrow \Hom(C^*,E)}$ with ${\hookrightarrow \Hom(C^*,E)}$ representing the image of $\Hom(C^*,E)$ in $\Hom(T^\perp,E)$; similarly, $\Ext^1(C_0,C)=\frac{\Hom(C_0^\perp,C)}{\hookrightarrow\Hom(C^*,C)}$.

\section{A general sufficient condition and applications}

We give a quite general sufficient condition under we have that the rational functor is a torsion functor. In fact, these will be situations in which closure of finite dimensional rationals under extensions is enough to have closure under extensions of all rational modules. A right comodule $M$ is called finitely cogenerated if it embedds in a finite direct sum of copies of $C$. 

First, let us note:

\begin{proposition}
Assume any injective indecomposable right comodule $E$ has finite Loewy length. If open (i.e. closed cofinite) ideals of $C^*$ are closed under products, then $Rat({}_{C^*}\Mm)$ is closed under extensions.
\end{proposition}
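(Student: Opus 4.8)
The plan is to reduce the statement to the homological criterion already isolated in Theorem \ref{t.1} (equivalently, in the Corollary preceding it), and then to dispatch that criterion using the finite Loewy length hypothesis via Proposition \ref{p.5}. First I would observe that the topological hypothesis supplies exactly the ``base case'': since the open (closed cofinite) ideals of $C^*$ are closed under products, Proposition \ref{p.3} gives that the finite dimensional rational left $C^*$-modules are closed under extensions. This is precisely the standing hypothesis of the Corollary of Section 3, so by that Corollary (or directly by Theorem \ref{t.1}, whose condition (i) now holds) it suffices to verify the homological condition: $\Ext^1(S,E)=0$ for every simple right $C$-comodule $S$ and every injective indecomposable right $C$-comodule $E$.

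For the core step I would fix such $S$ and $E$ and consider an arbitrary extension $0\rightarrow E\rightarrow \overline{P}\rightarrow S\rightarrow 0$ in ${}_{C^*}\Mm$; by the equivalence (i)$\Leftrightarrow$(i$'$) of Proposition \ref{p.8} it is enough to show every such sequence splits. Here $E$ is rational and injective, $S$ is rational, and by hypothesis $E$ has finite Loewy length, so $J^nE=0$ for some $n$. This is exactly the setting of Proposition \ref{p.5}, applied with $M'=E$ (of finite Loewy length) and $M''=S$: using once more that the finite dimensional rationals are closed under extensions, Proposition \ref{p.5} yields that $\overline{P}$ is itself rational. Since $E$ is then a subobject of the rational module $\overline{P}$ inside $\Mm^C=Rat({}_{C^*}\Mm)$, and $E$ is injective in $\Mm^C$, the inclusion $E\hookrightarrow\overline{P}$ admits a retraction; hence the sequence splits and $\Ext^1(S,E)=0$.

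With the homological condition verified and the topological condition holding by assumption, the Corollary (equivalently Theorem \ref{t.1}) gives that $Rat({}_{C^*}\Mm)$ is closed under extensions, completing the argument. The only genuine content lies in the middle step: upgrading $\overline{P}$ from an abstract $C^*$-module to a rational comodule. The finite Loewy length of the injective indecomposable $E$ is exactly what brings Proposition \ref{p.5} to bear, and this is where the hypothesis is used in an essential way; without it one cannot conclude that $J^nE=0$, and the extension $\overline{P}$ need not be rational, which is precisely how non-localizing examples arise.
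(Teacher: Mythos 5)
Your argument is correct, and it takes a recognizably different route from the paper's. You reduce to the homological criterion: Proposition \ref{p.3} converts the product condition on open ideals into closure of finite dimensional rationals under extensions, the Corollary following Proposition \ref{p.8} then reduces the whole problem to $\Ext^1(S,E)=0$ for $S$ simple and $E$ injective indecomposable (right comodules), and you verify this positively by applying Proposition \ref{p.5} with $M'=E$ (which has finite Loewy length by hypothesis) to conclude that any extension $\overline{P}$ of $S$ by $E$ is rational, whence injectivity of $E$ in $\Mm^C$ splits it. The paper instead argues by contradiction in two lines: if $Rat$ were not localizing, Proposition \ref{p.6} would produce a rational module $M$ with simple socle and infinite Loewy length, so $E(M)$ would be an injective indecomposable of infinite Loewy length, contradicting the hypothesis. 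The two arguments draw on the same underlying machinery (the Corollary's equivalences are themselves proved via Propositions \ref{p.6} and \ref{p.7}), but yours is a forward verification of condition (ii) of Theorem \ref{t.1} while the paper's exploits the finite Loewy length hypothesis negatively through the structure of a minimal counterexample. What your version buys is that it makes explicit where each hypothesis enters --- the topological condition only to establish the finite dimensional base case, the Loewy length condition only to invoke Proposition \ref{p.5} --- at the cost of being longer than the paper's reductio.
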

\begin{proof}
If rational modules are not closed under extensions, there is an exact sequence $0\rightarrow M\rightarrow P\rightarrow S\rightarrow 0$ as in Proposition \ref{p.6}, with $M$ with simple socle and of infinite Loewy length; but then $E(M)$ is indecomposable of infinite Loewy length, a contradiction.
\end{proof}

\begin{theorem}\label{t.2}
Let $C$ be a coalgebra such that ideals of $C^*$ are closed under products. Assume that for each left indecomposable injective comodule $E(T)$ there is $0\neq X\subseteq E(T)$ such that $E(T)/X$ is finitely cogenerated and $X$ has finite Loewy length. Then left rational $C^*$-modules are closed under extensions.
\end{theorem}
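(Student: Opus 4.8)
The plan is to argue by contradiction, following the pattern of the preceding Proposition but routing the finiteness through the homological criterion of Proposition~\ref{p.8}. Since ideals of $C^*$ are closed under products, Proposition~\ref{p.3} already guarantees that finite dimensional rational modules are closed under extensions, so the Corollary to Proposition~\ref{p.8} applies and it is enough to prove $\Ext^1(S,E)=0$ for every simple right comodule $S$ and every injective indecomposable right comodule $E$. Equivalently, if the rationals were not closed under extensions, Proposition~\ref{p.6} would produce an exact sequence $0\to M\to P\to S\to 0$ with $M$ rational of simple socle and infinite Loewy length, $S$ simple rational and $P$ cyclic nonrational, and then Proposition~\ref{p.7} would upgrade it to a non-split sequence $0\to E\to \overline{P}\to S\to 0$ with $E=E(M)$ injective indecomposable; thus $\Ext^1(S,E)\neq 0$ for these particular $S,E$. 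I will contradict this.

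By the equivalence (i)$\Leftrightarrow$(iii) of Proposition~\ref{p.8}, $\Ext^1(S,E)\neq 0$ means exactly that some $f\in\Hom_{C^*}(M_0,E)$ has infinite dimensional image, where $T=S^*$ is the simple \emph{left} comodule dual to $S$, $E(T)$ is its injective hull as a left comodule, and $M_0=T^\perp_{E(T)}\cong (E(T)/T)^*$ is the unique maximal submodule of $E(T)^*$. This is the reason the hypothesis is imposed on left indecomposable injectives: the relevant object is precisely $E(T)=E(S^*)$. So the whole theorem reduces to showing that, under the stated hypothesis, every $f\colon M_0\to E$ has finite dimensional image.

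Apply the hypothesis to $E(T)$: there is $0\neq X\subseteq E(T)$ with $E(T)/X$ finitely cogenerated and $X$ of finite Loewy length. Since $T$ is the essential socle of $E(T)$ and $X\neq 0$, we have $T\subseteq X\subseteq E(T)$. Dualizing this chain gives $A:=X^\perp_{E(T)}\cong (E(T)/X)^*\subseteq M_0$ together with $M_0/A\cong (X/T)^*$. As $E(T)/X$ embeds in some $C^k$, dualizing yields a surjection $(C^*)^k\twoheadrightarrow A$, so $A$ is finitely generated; and as $X/T$ has finite Loewy length, $(X/T)^*$ has finite Loewy length, i.e.\ $J^nM_0\subseteq A$ for some $n$. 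Given $f\colon M_0\to E$, the image $f(A)$ is a finitely generated rational submodule of $E$, hence finite dimensional; therefore $J^n\im(f)=f(J^nM_0)\subseteq f(A)$ is finite dimensional and $\im(f)$ has finite Loewy length. Finally $\im(f)$ is a nonzero submodule of the indecomposable injective $E$, so its socle equals the simple socle of $E$.

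It remains to promote \emph{simple socle $+$ finite Loewy length} to \emph{finite dimensional} for $\im(f)$, and this is the main obstacle: the bare implication is false for general modules (an $L_nE$ can be infinite dimensional), so one must use both that $E$ is indecomposable injective and that $M_0=(E(T)/T)^*$ is the dual of a comodule. The approach I would take is induction on the Loewy length of $\im(f)$: factor $f$ through $E\to E/L_0E$, embed $E/L_0E$ into the injective hull $\bigoplus_i E(S_i)^{m_i}$ of its socle, and compose with the projections onto the indecomposable injective summands $E(S_i)$; each resulting image has strictly smaller Loewy length, so the inductive hypothesis makes it finite dimensional. The delicate point---where the finitely generated submodule $A$ and the finite Loewy length of $X$ must be used together, and where local finiteness of $C$ (which follows from closure of ideals under products) bounds multiplicities---is the \emph{width control}: one must show that $f$ meets only finitely many of the summands $E(S_i)$, equivalently that each Loewy layer of $\im(f)$ is finite dimensional. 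For the bottom layer this is exactly the finite dimensionality of $f(A)$; for the finitely many remaining semisimple layers it should rest on the pseudo-compactness of $M_0$, namely that a homomorphism from a dual module into a rational module cannot distribute a single finite-Loewy-length image across infinitely many distinct simple constituents. I expect this width-control step to be the principal difficulty of the proof.
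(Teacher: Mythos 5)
Your dualization of the chain $T\subseteq X\subseteq E(T)$ is exactly the paper's central computation, and it is carried out correctly: $A=X^\perp_{E(T)}$ is finitely generated because $E(T)/X$ is finitely cogenerated, $M_0/A\cong (X/T)^*$ has finite Loewy length, hence $f(A)$ is finite dimensional and $J^n\,\im(f)\subseteq f(A)$, so $\im(f)$ has finite Loewy length. The genuine gap is the final step you flag yourself: passing from \emph{finite Loewy length} to \emph{finite dimensional}. That step is not a technicality you can defer --- the statement ``every $f\colon M_0\to E$ has finite dimensional image'' is, by Proposition \ref{p.8} and the Corollary following it, equivalent to the conclusion of the theorem (given that finite dimensional rationals are closed under extensions), so your sketch of an induction with ``width control'' is being asked to carry essentially the entire remaining content of the theorem. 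As you observe, a Loewy layer of an indecomposable injective can be infinite dimensional even over a locally finite coalgebra, so no soft argument will close this.

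The way out, and the route the paper takes, is not to aim at $\Ext^1(S,E)=0$ via condition (iii) of Proposition \ref{p.8} at all. Take the sequence $0\to M\to P\xrightarrow{\pi} S\to 0$ from Proposition \ref{p.6} directly. Since $E(T)^*$ is a direct summand of $C^*$, hence projective, the canonical projection $p\colon E(T)^*\to S$ lifts to $\overline{p}\colon E(T)^*\to P$; Nakayama (using $M=JM\subseteq JP$ and $P$ cyclic) forces $\overline{p}$ to be surjective, and then $M=JP=\overline{p}(JE(T)^*)=\overline{p}(M_0)$. Now apply your computation to the single map $\overline{p}\vert_{M_0}$: $\overline{p}(A)$ is finite dimensional and $M/\overline{p}(A)$ is a quotient of $M_0/A\cong (X/T)^*$, so $M$ has finite Loewy length. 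This contradicts clause (ii) of Proposition \ref{p.6}, which asserts that $M$ has \emph{infinite} Loewy length --- the contradiction is with infinite Loewy length, not with infinite dimension, and the width-control problem never arises. So your proposal has the right key lemma but the wrong target; redirected as above, it becomes the paper's proof.
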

\begin{proof}
Assume the contrary, and consider an exact sequence of left $C^*$-modules as provided by Proposition \ref{p.6}:
$$0\rightarrow M\rightarrow P\stackrel{\pi}{\rightarrow} S\rightarrow 0$$
Let $T=S^*$; since $P$ is cyclic, $M=JM\subseteq JP\neq P$, so $M=JP$ is the Jacobson radical of $P$. If $E(T)^*\stackrel{p}{\rightarrow} S$ is the canonical projection, there is $\overline{p}:E(T)^*\rightarrow P$ with $\pi\overline{p}=p$. As $\im(\overline{p})\not\subseteq M=\ker \pi$, $\im(\overline{p})+JP=P$ so $\im(\overline{p})=P$ since $P$ is finitely generated (by Nakayama lemma). If $H=T_{E(T)}^\perp=\{f\in E(T)^*|f(x)=0,\,\forall\,x\in T\}$ is the maximal submodule of $E(T)^*$ (which is unique by \cite[Lemma 1.4]{I2}), then $H=JE(T)^*$ so $\overline{p}(H)=\overline{p}(JE(T)^*)=J\overline{p}(E(T)^*)=JP=M$.\\
Since $E(T)/X$ is finitely cogenerated by $E(T)/X\hookrightarrow C^n$, dualizing we get that the submodule $Y=X^\perp_{E(T)}=(E(T)/X)^*\hookrightarrow E(T)^*$ is finitely generated, so $\overline{p}(Y)$ is finite dimensional. If $k$ is the Loewy length of $X$, then $XJ^{k+1}=0$ and $J^{k+1}X^*=0$. Also, $X^*=E(T)^*/Y$, and $JX^*=(JE(T)^*+Y)/Y=H/Y$, so $J^k(H/Y)=0$. Now, since there is an epimorphism $H/Y\longrightarrow  \overline{p}(H)/\overline{p}(Y)$, we get that $M/\overline{p}(Y)$ has finite Loewy length. Since $\overline{p}(Y)$ is finite dimensional, it follows that $M$ has finite Loewy length, which contradicts the initial choice given by Proposition \ref{p.6}. This ends the proof.
\end{proof}

\begin{remark}\label{r.1}
It is not hard to see that the hypothesis of $E(T)/X$ being finitely cogenerated is actually equivalent to the fact that $X^\perp_{E(T)}\cong (E(T)/X)^*\subseteq E(T)^*$ is finitely generated. Indeed, if $X^\perp_{E(T)^*}$ is generated by $f_1,\dots,f_n$, then one can see that $\psi: E(T)\ni x\longmapsto (f_i(x_0)x_{-1})_{i=1,\dots,n}\in C^n$ is a morphism of left $C$-comodules (right $C^*$-modules), and $\ker(\psi)=X$, since one shows easily that $\psi(x)=0$ if and only if $f(x)=0$ for all $f\in X^\perp_{E(T)}$ (so $\ker(\psi)=(X^\perp_{E(T)})^\perp=X$). 
\end{remark}

Hence, we have the following

\begin{corollary}
Suppose the open ideals of $C^*$ are closed under products. If for all simple left $C$-comodules $T$, the maximal ideal $T^\perp$ of $C^*$ is finitely generated (equivalently, $T^\perp_{E(T)}$ is finitely generated), then left rational $C^*$-modules are closed under extensions.
\end{corollary}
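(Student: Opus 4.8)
The plan is to deduce this directly from Theorem \ref{t.2}, by exhibiting, for each indecomposable injective left comodule $E(T)$, a subcomodule $X$ satisfying the two hypotheses of that theorem: $X$ of finite Loewy length and $E(T)/X$ finitely cogenerated. The topological hypothesis of Theorem \ref{t.2} --- that the open (closed cofinite) ideals of $C^*$ are closed under products --- is exactly what we are assuming, so the only work is to produce the subcomodule $X$ for each $E(T)$.

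The natural candidate is $X = T$, the simple socle of $E(T)$; this is nonzero and, being simple, has Loewy length $1$, so the finite Loewy length requirement is immediate. It then remains to check that $E(T)/T$ is finitely cogenerated. Here I would invoke Remark \ref{r.1}, which says that $E(T)/X$ is finitely cogenerated precisely when $X^\perp_{E(T)}\cong (E(T)/X)^*$ is finitely generated as a $C^*$-module. With $X = T$ this becomes the condition that $T^\perp_{E(T)}$ be finitely generated, which is exactly our hypothesis in its $E(T)$-formulation. Thus both hypotheses of Theorem \ref{t.2} hold for $X=T$, and the conclusion follows.

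The one remaining point is to justify that the two formulations of the hypothesis --- $T^\perp$ finitely generated as an ideal of $C^*$, versus $T^\perp_{E(T)}$ finitely generated as a submodule of $E(T)^*$ --- agree. Writing $C = E(T)\oplus H$ as left comodules (so that the chosen indecomposable injective $E(T)$ is realized as a direct summand of the injective cogenerator $C$), dualization gives $C^* = E(T)^*\oplus H^*$ and hence $T^\perp = T^\perp_{E(T)}\oplus H^*$. Since $H^*$ is a direct summand of the cyclic module ${}_{C^*}C^*$ it is itself cyclic, hence finitely generated; therefore $T^\perp$ is finitely generated if and only if $T^\perp_{E(T)}$ is. I expect no serious obstacle: the entire argument is a bookkeeping reduction to Theorem \ref{t.2} via the choice $X = \mathrm{soc}\,E(T)$, and the only mild care needed is matching the ``ideal of $C^*$'' and ``submodule of $E(T)^*$'' versions of the finite-generation hypothesis through this summand decomposition.
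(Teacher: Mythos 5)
Your proposal is correct and is essentially the proof the paper intends: the corollary is stated as an immediate consequence of Theorem \ref{t.2} via Remark \ref{r.1}, taking $X=T=\mathrm{soc}\,E(T)$ (Loewy length $1$) so that the finitely cogenerated condition on $E(T)/T$ becomes exactly the hypothesis that $T^\perp_{E(T)}$ is finitely generated. Your reconciliation of the two formulations of the hypothesis via $C=E(T)\oplus H$, $T^\perp=T^\perp_{E(T)}\oplus H^*$ with $H^*$ cyclic is the same decomposition the paper uses in Proposition \ref{p.8}, so there is nothing to add.
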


We recall from \cite{CNO} that a coalgebra is called left $\Ff$-Noetherian if every closed cofinite left ideal of $C^*$ is finitely generated. The following Corollary is proved in \cite{CNO}, but also follows as a particular case of the above result:

\begin{corollary}\label{c.FN}
If $C$ is left $\Ff$-Noetherian, then left rational $C^*$-modules are closed under extensions.
\end{corollary}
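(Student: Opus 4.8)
The plan is to obtain the Corollary by checking the two hypotheses of the preceding Corollary for an $\Ff$-Noetherian coalgebra, namely that (a) the open (closed cofinite) left ideals of $C^*$ are closed under products, and (b) for every simple left $C$-comodule $T$ the maximal ideal $T^\perp$ is finitely generated. Condition (b) is immediate: a simple comodule $T$ is finite dimensional, so $T^\perp$ is a closed cofinite left ideal and hence finitely generated by hypothesis. Thus the entire content lies in establishing (a).

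To prove (a), fix finite dimensional left subcomodules $X,Y\subseteq C$; I must show $X^\perp Y^\perp$ is closed and cofinite. For cofiniteness, note that $Y^\perp$ is finitely generated (it is closed cofinite, so the $\Ff$-Noetherian hypothesis applies), hence so is the quotient $M:=Y^\perp/X^\perp Y^\perp$. Moreover $X^\perp M=0$, since $X^\perp y\in X^\perp Y^\perp$ for every $y\in Y^\perp$; consequently every generator $m$ of $M$ satisfies $\mathrm{ann}_{C^*}(m)\supseteq X^\perp$, so $C^*m$ is a quotient of $C^*/X^\perp\cong X^*$ and is finite dimensional. As $M$ is finitely generated, $M$ is finite dimensional, and therefore $X^\perp Y^\perp$ is cofinite. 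Since the closure of $X^\perp Y^\perp$ equals $(X\wedge Y)^\perp$ by \cite[Lemma 2.5.7]{DNR}, this cofiniteness already forces $(X\wedge Y)^\perp$, hence $X\wedge Y$, to be finite dimensional; in particular $C$ is locally finite.

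What remains — and is the main obstacle — is closedness of $X^\perp Y^\perp$, equivalently the wedge identity $X^\perp Y^\perp=(X\wedge Y)^\perp$. I would prove this by a dimension count. Dualizing the exact sequence $0\to (X\wedge Y)/Y\to C/Y\to C/(X\wedge Y)\to 0$ of comodules yields a surjection $M\twoheadrightarrow Y^\perp/(X\wedge Y)^\perp\cong ((X\wedge Y)/Y)^*$ whose kernel is precisely the defect $(X\wedge Y)^\perp/X^\perp Y^\perp$; so it suffices to show $\dim M\le\dim\big((X\wedge Y)/Y\big)$. The point is that $M=(C/Y)^*/X^\perp(C/Y)^*$ is the largest quotient of $(C/Y)^*$ annihilated by $X^\perp$, which should be dual to the largest subcomodule of $C/Y$ annihilated by $X^\perp$, namely $\{\bar c:\Delta(c)\in X\otimes C+C\otimes Y\}=(X\wedge Y)/Y$. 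Converting this duality into the stated inequality on dimensions is where the finite generation of $X^\perp$ (i.e.\ $\Ff$-Noetherianity) must be used in an essential way, and I would follow the argument of \cite[Lemma 2.10]{CNO}. I expect this closedness/wedge step to be the crux; by contrast the cofiniteness and local finiteness above are routine. One cannot sidestep it by appealing to Proposition \ref{p.3}, since the equivalence there only reduces (a) to closure of finite dimensional rationals under extensions, which leads back to the same identity.

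Once (a) and (b) are verified, the preceding Corollary applies verbatim and gives that the left rational $C^*$-modules are closed under extensions.
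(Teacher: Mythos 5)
Your overall strategy --- verify the two hypotheses of the preceding corollary --- is exactly how the paper intends Corollary \ref{c.FN} to be obtained, and your treatment of hypothesis (b) and of the cofiniteness of $X^\perp Y^\perp$ is correct. However, the step you yourself flag as the crux, the closedness of $X^\perp Y^\perp$ (equivalently, the identity $X^\perp Y^\perp=(X\wedge Y)^\perp$), is not actually proved. The duality heuristic you offer (``the largest quotient of $Y^\perp$ annihilated by $X^\perp$ should be dual to the largest subcomodule of $C/Y$ annihilated by $X^\perp$'') is essentially a restatement of the closedness you are trying to establish: a cofinite quotient of $Y^\perp$ is dual to a subcomodule of $C/Y$ precisely when the submodule one quotients by is closed. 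And the reference you fall back on, \cite[Lemma 2.10]{CNO}, cannot fill the hole: as the paper itself records, that lemma \emph{assumes} local finiteness together with $X^\perp Y^\perp=(X\wedge Y)^\perp$ and deduces closure under products; it does not derive that identity from $\Ff$-Noetherianity. So as written the proposal has a genuine gap at its decisive step.

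The gap is fixable with tools already recalled in the paper, and more cheaply than by a dimension count. First use Proposition \ref{p.3} ((iii)$\Rightarrow$(ii)) to reduce to products of two-sided closed cofinite ideals $U^\perp W^\perp$ with $U,W$ finite dimensional subcoalgebras; this is precisely the sidestep you dismiss --- the point of invoking Proposition \ref{p.3} here is not the equivalence with extensions of finite dimensional rationals, but the left/two-sided reduction. Both $U^\perp$ and $W^\perp$ are finitely generated as left ideals by $\Ff$-Noetherianity, say $U^\perp=\sum_j C^*a_j$ and $W^\perp=\sum_k C^*b_k$; since $U^\perp$ is two-sided, $U^\perp W^\perp=\sum_{j,k}C^*a_jb_k$ is a finitely generated left ideal. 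By the fact from \cite{IO} recalled in the preliminaries, finitely generated submodules of $C^*$ are closed, so $U^\perp W^\perp$ is closed; combined with your cofiniteness argument this gives hypothesis (a), and the wedge identity then follows for free from \cite[Lemma 2.5.7]{DNR}. Note that this really uses two-sidedness: for $X^\perp$ with $X$ merely a finite dimensional subcomodule, $X^\perp$ is only a one-sided ideal and the product of finitely generated left ideals is not visibly finitely generated, which is why the reduction matters.
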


The next corollary follows directly from the above results, but it gives some particularly nice and easy conditions to check in order to get that $Rat({}_{C^*}\Mm)$ is closed under extensions.

\begin{corollary}\label{c.conditions}
Let $C$ be a coalgebra such that the set of open ideals of $C^*$ is closed under products. If any of the following conditions is true, then the left rational $C^*$-modules are closed under extensions.\\
(i) For each simple right $C$-comodule $S$, its injective hull $E(S)$ has finite Loewy length (in particular, when it is finite dimensional).\\
(ii) For each simple left $C$-comodule $T$, either\\
{\bf$\bullet$} its injective hull $E(T)$ has finite Loewy length (in particular, if it is finite dimensional), or\\
{\bf$\bullet$} $L_{n+1}E(T)/L_{n}E(T)$ is finite dimensional for some $n$ (in particular, this is true when $E(T)$ is artinian), or\\
{\bf$\bullet$} $\Ext^1(L,T)\neq 0$ for only finitely many simple left comodules $L$ (which, in this case, is equivalent to: $L_1E(T)/L_0E(T)$ is finite dimensional).
\end{corollary}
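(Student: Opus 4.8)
The plan is to treat the two conditions separately, deducing (i) from the (unnumbered) Proposition immediately preceding Theorem \ref{t.2}, and (ii) from Theorem \ref{t.2} itself. Throughout I would use that the standing assumption — the open ideals of $C^*$ are closed under products — forces $C$ to be locally finite; this is part of the equivalent reformulation recorded in Theorem \ref{t.1}(i) (or combine Proposition \ref{p.3} with \cite[Lemma 2.10]{CNO}), and by Lemma \ref{l.quasifinite} local finiteness is left--right symmetric, so $C/N$ is quasifinite on either side for every finite dimensional subcomodule $N$.

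For (i), I would observe that the injective indecomposable right $C$-comodules are exactly the injective hulls $E(S)$ of the simple right comodules $S$. Hence the hypothesis in (i) says precisely that every injective indecomposable right comodule has finite Loewy length, which is verbatim the hypothesis of the Proposition preceding Theorem \ref{t.2}; that Proposition (which runs through Proposition \ref{p.6}, applicable since products of open ideals are closed) yields that $Rat({}_{C^*}\Mm)$ is closed under extensions. So (i) needs no further argument.

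For (ii), the plan is to show that each of the three alternatives forces, for the given simple left comodule $T$, the existence of an index $n$ with $L_{n+1}E(T)/L_nE(T)$ finite dimensional, and then to feed $X=L_nE(T)$ into Theorem \ref{t.2}. The first bullet gives this with $n$ the Loewy length of $E(T)$ (so $L_nE(T)=E(T)$ and the quotient is $0$); the second bullet is this statement outright. For the third bullet I would prove the asserted equivalence: since $L_0E(T)=T$ and $L_1E(T)/L_0E(T)=\mathrm{soc}(E(T)/T)$, applying $\Hom(L,-)$ to $0\to T\to E(T)\to E(T)/T\to 0$ and using injectivity of $E(T)$ together with $\Hom(L,E(T))=\Hom(L,T)$ (any map into $E(T)$ lands in its socle $T$) gives $\Hom(L,E(T)/T)\cong\Ext^1(L,T)$ for every simple left $L$; local finiteness makes each $\Hom(L,E(T)/T)$ finite dimensional, via $E(T)/T\hookrightarrow C/T$ and quasifiniteness of $C/T$. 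Thus $\mathrm{soc}(E(T)/T)$ is finite dimensional exactly when $\Ext^1(L,T)\neq 0$ for only finitely many $L$, and in all three cases we obtain the desired $n$.

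Finally, with such an $n$ fixed, I would set $X=L_nE(T)$. Then $X\neq 0$, since it contains $T=L_0E(T)$, and $X$ has Loewy length at most $n$, while $E(T)/X$ has socle $L_{n+1}E(T)/L_nE(T)$, which is finite dimensional. A comodule with finite dimensional socle is finitely cogenerated: its socle is essential, so it embeds in the injective hull of its socle, a finite direct sum of injective indecomposables, each of which is a summand of $C$, whence $E(T)/X\hookrightarrow C^m$. Thus the hypotheses of Theorem \ref{t.2} are met for every left indecomposable injective $E(T)$, and the conclusion follows. The main obstacle is the bookkeeping in the third bullet — the isomorphism $\Hom(L,E(T)/T)\cong\Ext^1(L,T)$ and the use of local finiteness to guarantee finite dimensionality of these $\Hom$ spaces — together with keeping the left/right duality straight, since Proposition \ref{p.6} produces right-comodule data matching condition (i), whereas Theorem \ref{t.2} consumes left-injective data matching condition (ii).
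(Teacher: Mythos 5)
Your argument is correct and follows exactly the route the paper intends: the paper offers no written proof beyond ``follows directly from the above results,'' meaning (i) is the unnumbered Proposition preceding Theorem \ref{t.2} and (ii) is Theorem \ref{t.2} applied with $X=L_nE(T)$, which is precisely what you do. Your verification that each bullet of (ii) produces a suitable $n$ (including the identification $\Hom(L,E(T)/T)\cong\Ext^1(L,T)$ and the use of local finiteness, forced by the standing hypothesis via Proposition \ref{p.3} and Lemma \ref{l.quasifinite}, to get the stated equivalence in the third bullet) correctly fills in the details the paper leaves implicit.
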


We now give some applications of the above results. In particular, we note how many results in \cite{CNO} can be obtained as a corollary. We first need the following easy but useful Lemma:

\begin{lemma}\label{l.2}
Let $M$ be a left $C$-comodule such that $M$ is finitely cogenerated. Then $JM^*=M_0^\perp=(M_0)^\perp_M$. Consequently, if $M$ is a left comodule such that all $M_n=L_nM$ are finite dimensional, then $J^{n+1}M^*=M_n^\perp$ and every cofinite submodule of $M^*$ is finitely generated (and closed).
\end{lemma}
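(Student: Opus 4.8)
The plan is to first prove the primary identity $JM^*=M_0^\perp$ for a finitely cogenerated $M$ by transporting the computation to $C^n$ via duality, and then to bootstrap the ``consequently'' statement by an induction on the Loewy layers combined with a Nakayama argument. Throughout I will use the $C^*$-module isomorphisms $(M/X)^*\cong X^\perp$, $M^*/X^\perp\cong X^*$ and $X^\perp/Y^\perp\cong (Y/X)^*$ recalled in the introduction, the identity $J=C_0^\perp$, and the fact that a comodule is semisimple if and only if it is annihilated by $J$.

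For the first assertion, fix an embedding $M\hookrightarrow C^n$. Dualizing this and the inclusion $\mathrm{soc}(C^n)=C_0^n\hookrightarrow C^n$ produces two surjections of left $C^*$-modules, $\pi:(C^n)^*=(C^*)^n\twoheadrightarrow M^*$ with $\ker\pi=M^\perp$, and $q:(C^n)^*\twoheadrightarrow (C_0^n)^*$ with $\ker q=(C_0^n)^\perp=J\cdot(C^*)^n$. Since $\pi$ is a module map, $JM^*=\pi\big(J\cdot(C^*)^n\big)=\pi\big((C_0^n)^\perp\big)$, so it suffices to identify $\pi^{-1}(JM^*)=(C_0^n)^\perp+M^\perp$ with $M_0^\perp$ inside $(C^n)^*$, where $M_0=\mathrm{soc}(M)=M\cap C_0^n$. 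The inclusion $\subseteq$ is immediate from $M_0\subseteq C_0^n$ and $M_0\subseteq M$. For $\supseteq$, given $F\in(C^n)^*$ with $F|_{M_0}=0$, the prescription ``$F$ on $C_0^n$, $0$ on $M$'' is well defined on $C_0^n+M$ because the two recipes agree on $C_0^n\cap M=M_0$; extending it to some $F_2\in(C^n)^*$ gives $F_2\in M^\perp$ and $F-F_2\in(C_0^n)^\perp$, so $F\in(C_0^n)^\perp+M^\perp$. Quotienting by $M^\perp$ yields $JM^*=M_0^\perp/M^\perp=(M_0)^\perp_M$. The main obstacle here is exactly closure: a priori one only has $(C_0^n\cap M)^\perp=\overline{(C_0^n)^\perp+M^\perp}$ in the finite topology, and it is the explicit extension-of-functionals argument above that shows the sum is already closed, so that no closure is needed.

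For the consequence, assume all $M_n=L_nM$ are finite dimensional. Then $M_0$ is finite dimensional, so the essential embedding $M\hookrightarrow E(M_0)$ with $E(M_0)\cong\bigoplus E(S_i)^{m_i}$ (a finite sum of indecomposable injectives, each embeddable in $C$) shows $M$ is finitely cogenerated, whence $M^*$ is finitely generated. I then prove $J^{n+1}M^*=M_n^\perp$ by induction: the case $n=0$ is the first part, and assuming $J^nM^*=M_{n-1}^\perp$ I apply the first part to $M'=M/M_{n-1}$, whose socle $M_n/M_{n-1}$ is finite dimensional, hence $M'$ is finitely cogenerated. This gives $J(M')^*=(M_n/M_{n-1})^\perp$ inside $(M')^*\cong M_{n-1}^\perp$, and under this isomorphism $(M_n/M_{n-1})^\perp$ corresponds to $M_n^\perp$; thus $J^{n+1}M^*=J\cdot M_{n-1}^\perp=M_n^\perp$.

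Finally, let $N\subseteq M^*$ be cofinite. Since $M^*/N$ is finite dimensional, the descending chain $J^i(M^*/N)$ stabilizes at a finitely generated module $W$ with $JW=W$, so $W=0$ by Nakayama; hence $M_{k-1}^\perp=J^kM^*\subseteq N$ for some $k$. As $M_{k-1}$ is finite dimensional, $M_{k-1}^\perp$ is closed cofinite, i.e. open, so $N\supseteq M_{k-1}^\perp$ is open and therefore closed, giving $N=W^\perp$ with $W=N^\perp\subseteq M_{k-1}$ finite dimensional. For finite generation, pick $g_1,\dots,g_r\in N$ whose images generate the finite dimensional space $N/M_{k-1}^\perp\subseteq M^*/M_{k-1}^\perp\cong M_{k-1}^*$, so $N=\sum_i C^*g_i+M_{k-1}^\perp$; since $M_{k-1}^\perp\cong(M/M_{k-1})^*$ and $M/M_{k-1}$ has finite dimensional socle $M_k/M_{k-1}$ (so is finitely cogenerated), its dual $M_{k-1}^\perp$ is finitely generated, and hence so is $N$. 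The one subtle point is the finite generation of this ``tail'' $J^kM^*=M_{k-1}^\perp$, which is precisely what the finite dimensionality of every Loewy layer supplies through finite cogeneration of $M/M_{k-1}$.
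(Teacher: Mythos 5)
Your proof is correct, and for the core identity $JM^*=M_0^\perp$ it takes a genuinely different route from the paper. The paper fixes $f\in M_0^\perp$, forms the comodule map $\alpha(m)=f(m_0)m_{-1}:M\to C$, factors it through $M/M_0\hookrightarrow (C/C_0)^n$ using the injectivity of $C$, and reads off an explicit decomposition $f=\sum_i f_ih_i$ with $f_i\in C_0^\perp=J$, $h_i\in M^*$; the work there is the concrete algebra with the coaction and the counit. You instead dualize the embedding $M\hookrightarrow C^n$ to a surjection $(C^*)^n\twoheadrightarrow M^*$, observe $J\cdot (C^*)^n=(C_0^\perp)^n=(C_0^n)^\perp$, and reduce everything to the purely linear-algebraic identity $(C_0^n\cap M)^\perp=(C_0^n)^\perp+M^\perp$ in $(C^n)^*$, proved by extending functionals from $C_0^n+M$; you correctly identify that the only danger is that the sum might need a closure, and your extension argument rules that out. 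Both proofs ultimately rest on the same two inputs (the embedding into $C^n$ and $J=C_0^\perp$), but yours avoids any further use of the coaction or of injectivity of $C$, so it is somewhat more elementary, at the cost of not producing the explicit presentation $f=\sum_i f_ih_i$. The inductive step $J^{n+1}M^*=M_n^\perp$ and the treatment of cofinite submodules coincide with the paper's argument; you also make explicit two points the paper leaves implicit, namely that finite dimensionality of $M_0$ already forces $M$ to be finitely cogenerated (so the first part applies in the ``consequently'' setting), and that $J^k(M^*/N)=0$ for some $k$ follows from a Nakayama argument on the finite dimensional module $M^*/N$.
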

\begin{proof}
The fact that $JM\subseteq M_0^\perp$ is straightforward (it follows, for example, by \cite[Lemma 2.2]{ILMS}). Let $\sigma:M\hookrightarrow C^n$ be an embedding of left comodules. Let $f\in M_0^\perp\subseteq M^*$, and let $\alpha:M\rightarrow C$ the morphism of left $C$-comodules defined by $\alpha(m)=f(m_0)m_{-1}$. Then $\alpha\mid_{M_0}=0$ since $f\in M_0^\perp$, and so it factors as $\alpha=g\circ p$ as bellow; the following diagram is obviously commutative with $\eta$ injective since $C_0^n\cap M=M_0$ ($p$ and $\pi$ are the canonical projections). 
$$\xymatrix{
& M \ar[r]^\hookrightarrow_\sigma \ar[d]^p &  C^n\ar[d]^\pi \\
0 \ar[r] & M/M_0 \ar[d]^{g}\ar[r]^\hookrightarrow_\eta& (C/C_0)^n\ar@{..>}[dl]^{h} \\
& C & 
}$$
By the injectivity of $C$, the morphism $g$ extends to some $h$, and so we have $\alpha=gp=h\eta p=(h\pi)\sigma$. But using the standard projections $p_i$ and injections $\sigma_i$ of $C^n$, this implies that $\alpha=\sum\limits_{i=1}^n(h\pi \sigma_i)(p_i\sigma)=\sum\limits_i u_i\circ \alpha_i$ with $u_i=h\pi\sigma_i:C\rightarrow C$ and $\alpha_i=p_i\sigma:M\rightarrow C$. Therefore, if $f_i=\varepsilon\circ u_i$ and $h_i=\varepsilon\circ \alpha_i$, we have
\begin{eqnarray*}
\sum\limits_{i=1}^nf_ih_i(m) & = & \sum\limits_if_i(m_{-1})h_i(m_0)=\sum\limits_i\varepsilon(u_i(m_{-1}))\varepsilon(\alpha_i(m_0))\\
& = & \sum\limits_i\varepsilon u_i(\alpha_i(m)_1)\varepsilon(\alpha_i(m)_2) {\rm\,\,\,-\,since\,}\alpha_i{\rm\,is\,a\,comodule\,map}\\
& = & \sum\limits_i\varepsilon(u_i\alpha_i(m))=\varepsilon(\alpha(m))\\
& = & f(m)
\end{eqnarray*} 
For the last part, use induction on $n$: if $J^{n}M^*=M_{n-1}^\perp$, then $J^{n+1}M^*=JM_{n-1}^\perp$; apply the first part to $M/M_{n-1}$ and get that $J(M/M_{n-1})^*=(M_n/M_{n-1})^\perp_{M/M_{n-1}}$ which corresponds to $M_n^\perp$ through the isomorphism $(M/M_{n-1})^*\cong M_{n-1}^\perp$; therefore $JM_{n-1}^\perp=M_n^\perp$. Finally, if $I$ is a cofinite submodule of $M^*$, then $J^n\cdot (M^*/I)=0$ for some $n$, and so $J^nM^*\subseteq I$, i.e. $M_n^\perp\subseteq I$. Since $I$ contains a cofinite closed ideal, it is closed; also, since $M/M_n$ is finitely cogenerated, we have that $M_n^\perp\cong (M/M_n)^*$ is finitely generated, so $I$ is finitely generated too.
\end{proof}

We first note a generalization of \cite[Theorem 2.8]{CNO}, part of which was proved first in \cite[Theorem 4.6]{HR74}. In fact, the following also generalizes \cite[Theorem 2.11]{CNO}, and, in particular, recovers the caracterization of the commutative case. It also genegalizes some results of \cite{C0}. Recall a coalgebra $C$ is called left strongly reflexive, or $C^*$ is called almost Noetherian if every cofinite left ideal of $C^*$ is finitely generated (see \cite{HR74}). We may thus call a pseudocompact left $C^*$-module $M^*$ (i.e. $M$ is left $C$-comodule) almost Noetherian if every cofinite submodule is finitely generated.

\begin{theorem}\label{t.3}
Let $C$ be a coalgebra such that for each simple left $C$-comodule $T$, we have $\Ext^{C,1}(L,T)\neq 0$ for only finitely many simple left $C$-comodules $L$, i.e. $\Hom(L,E(T)/T)\neq 0$ for only finitely many simple left comodules $L$. Then the following assertions are equivalent:\\
(i) Rational left $C^*$-modules are closed under extensions (i.e. $C$ has a left Rat torsion functor).\\
(ii) $L_nE(T)$ is finite dimensional for all $n$ and all simple left $C$-comodules $T$.\\
(iii) $L_1E(T)$ is finite dimensional for all simple left $C$-comodules $T$.\\
(iv) $E(T)^*$ is an almost noetherian $C^*$-module for all simple left comodules $T$, i.e. every cofinite submodule of $E(T)^*$ is finitely generated.\\
(v) $C$ is locally finite.\\
(vi) $C^*$ is left $\Ff$-Noetherian.
\end{theorem}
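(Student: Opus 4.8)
The plan is to prove Theorem \ref{t.3} as a cycle of implications, exploiting the strong finiteness hypothesis $\dim\Hom(L,E(T)/T)\neq 0$ for only finitely many simple $L$, which says precisely that $L_1E(T)/L_0E(T)$ is finite dimensional (since $\mathrm{soc}(E(T)/T)=L_1E(T)/T$ decomposes into the $\Ext^1(L,T)$-many simples). First I would establish the ``topological'' implications that do not need the homological input. By Proposition \ref{p.3}, local finiteness together with $V^\perp W^\perp=(V\wedge W)^\perp$ is equivalent to finite dimensional rationals being closed under extensions, and by Proposition \ref{p.ratlf}, having a torsion Rat functor forces local finiteness; so (i)$\Rightarrow$(v) is immediate. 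The implication (vi)$\Rightarrow$(i) is Corollary \ref{c.FN}, and (vi)$\Rightarrow$(v) is classical (or follows since $\Ff$-Noetherian gives finitely generated cofinite ideals, hence local finiteness). The core of the argument is to run the chain (v)$\Rightarrow$(iii)$\Rightarrow$(ii)$\Rightarrow$(iv)$\Rightarrow$(vi) and to close the loop with (i), using the hypothesis crucially to upgrade ``finite socle quotient'' to ``all Loewy layers finite dimensional.''

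Next I would prove (v)$\Rightarrow$(iii): local finiteness says $C/T$ is quasifinite for every simple $T$ (Lemma \ref{l.quasifinite}(iii)), so $\Hom(L,C/T)$ is finite dimensional for each simple $L$; combined with the hypothesis that only finitely many $L$ contribute, the socle of $E(T)/T$ (equivalently $L_1E(T)/T$) is a finite direct sum of finite-dimensional isotypic pieces, hence finite dimensional, giving $L_1E(T)$ finite dimensional. The step (iii)$\Rightarrow$(ii) is where the hypothesis does the real work: I would argue by induction on $n$ that $L_nE(T)$ is finite dimensional. The inductive step passes to $E(T)/L_{n-1}E(T)$; its socle is $L_nE(T)/L_{n-1}E(T)$, which embeds into a finite sum of injective hulls $E(S_i)$ of the simples $S_i$ appearing in $\mathrm{soc}(E(T)/L_{n-1}E(T))$. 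The subtle point is controlling which simples $S$ appear and with what multiplicity in the $n$-th layer: one must use that each layer's composition factors are governed by the (left) degrees in the Ext-quiver, and the hypothesis guarantees these degrees are finite at every vertex, so each new layer is built from finitely many simples each occurring finitely often, keeping dimensions finite. I expect this inductive dimension bookkeeping to be the main obstacle, since one needs a uniform finiteness that propagates through the Loewy filtration rather than just at the first step.

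Having (ii), the implication (ii)$\Rightarrow$(iv) is exactly Lemma \ref{l.2}: since all $L_nE(T)=M_n$ are finite dimensional, $J^{n+1}E(T)^*=M_n^\perp$ and every cofinite submodule of $E(T)^*$ is finitely generated, so $E(T)^*$ is almost Noetherian. For (iv)$\Rightarrow$(vi) I would decompose $C=\bigoplus_j E(T_j)$ into indecomposable injectives (as a right $C$-comodule, or the appropriate left version), whence $C^*\cong\prod_j E(T_j)^*$; a cofinite closed left ideal $X^\perp$ of $C^*$ then ``localizes'' onto finitely many factors $E(T_j)^*$ up to a finitely generated correction, and the almost Noetherian property of each factor forces $X^\perp$ to be finitely generated. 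Here the finiteness of the support of a cofinite ideal across the product decomposition, together with the hypothesis bounding the relevant Ext-degrees, is what makes the product of individually almost-Noetherian modules yield an $\Ff$-Noetherian $C^*$. Finally (iv)$\Rightarrow$(i) can be obtained directly from Theorem \ref{t.2} (or its Corollary): almost-Noetherian $E(T)^*$ means $T^\perp_{E(T)}$ is finitely generated, so taking $X=0$ (or a suitable finite-Loewy-length $X$) the hypothesis of Theorem \ref{t.2} holds once we know products of open ideals are closed; and closure of products follows from (v) via Proposition \ref{p.3}, which we have along the cycle. This closes all implications, and I would remark that the hypothesis is exactly the ``finite left degree in the Ext-quiver'' condition advertised in the introduction, making the equivalence with $\Ff$-Noetherianness transparent.
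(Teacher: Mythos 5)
Your proposal is correct and follows essentially the same route as the paper: the cycle (i)$\Rightarrow$(v)$\Rightarrow$(iii)$\Rightarrow$(ii)$\Rightarrow$(iv)$\Rightarrow$(vi)$\Rightarrow$(i) using Proposition \ref{p.ratlf}/Lemma \ref{l.quasifinite} for local finiteness, the inductive Loewy-layer argument for (iii)$\Rightarrow$(ii), Lemma \ref{l.2} for (ii)$\Rightarrow$(iv), the decomposition of a cofinite closed ideal over $C=\bigoplus_i E(T_i)$ for (iv)$\Rightarrow$(vi), and Corollary \ref{c.FN} to close the loop. The only cosmetic difference is that the step you flag as the ``main obstacle'' in (iii)$\Rightarrow$(ii) needs no fresh appeal to the Ext-quiver hypothesis: once $L_{n-1}E(T)$ is finite dimensional, $E(T)/L_{n-1}E(T)$ embeds in a finite sum of $E(S_i)$'s and (iii) applied to those already bounds the next layer, which is exactly the paper's ``apply Lemma \ref{l.quasifinite} inductively.''
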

\begin{proof}
(i)$\Rightarrow$(iii) Follows since in this case $C$ is locally finite; therefore, $\Hom(L,E(T)/T)$ is finite dimensional for all $L$; but it is also $0$ for all but finitely many left comodules $L$'s. This shows that the socle of $E(T)/T$ is finite dimensional.\\
(iii)$\Rightarrow$(ii) Follows by applying Lemma \ref{l.quasifinite} inductively.\\
(ii)$\Rightarrow$(iv) Follows from Lemma \ref{l.2}\\
(iv)$\Rightarrow$(vi) Follows since each closed cofinite left ideal $X^\perp\subset C^*$ can be decomposed as $X^\perp=X^\perp_{\bigoplus\limits_{i\in F}E(T_i)}\oplus\bigoplus\limits_{i\in H\setminus F}E(T_i)^*$, where $C=\bigoplus\limits_{i\in H}E(T_i)$ is a decomposition of $C$ into indecomposables such that $X\subseteq \bigoplus\limits_{i\in F}E(T_i)$, $F$-finite. Then one finds finite dimensional $X_i\subseteq E(T_i)$ such that $X\subseteq \bigoplus\limits_{i\in F}X_i$; we get $\bigoplus\limits_{i\in F} X_i^\perp{}_{E(T_i)}\subseteq X^\perp_{\bigoplus\limits_{i\in F}E(T_i)}\subseteq \bigoplus\limits_{i\in F}E(T_i)^*$, so $X^\perp_{\bigoplus\limits_{i\in F}E(T_i)}$ is finitely generated.\\
(vi)$\Rightarrow$(i) is known (from \cite{CNO} or above considerations: Corollary \ref{c.FN}, or first obtain (ii) and apply Corollary \ref{c.conditions}).\\
(iii)$\Leftrightarrow$(v) is a direct consequence of Lemma \ref{l.quasifinite}
\end{proof}

The hypothesis of the previous proposition says that the (left) Gabriel quiver of the coalgebra $C$ has only finitely many arrows going into any vertex $T$. In particular, the hypothesis is true if $C$ is almost connected, i.e. if it has only finitely many types of isomorphism of simple comodules. In this particular situation, we recover the above mentioned results of \cite[Theorem 2.9]{CNO}.

Another application is in the case of semiperfect coalgebras. It is proved in \cite[Lemma 2.3]{L} (see also \cite[Theorem 3.3]{NT1} and \cite[Theorem 2.12]{CNO}) that if $C$ is right semiperfect, then $C$ is $\Ff$-Noetherian and has a left Rat torsion functor. We note an alternate proof of this as a a consequence of the above results, and also a strenghtening:

\begin{corollary}
Let $C$ be a right semiperfect coalgebra. Then $C^*$ is left $\Ff$-Noetherian and $C$ has a left and right torsion functor, i.e. rational left modules and rational right modules are closed under extensions.
\end{corollary}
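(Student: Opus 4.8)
The plan is to reduce everything to the single structural input that right-semiperfectness provides, and then to read off the three conclusions from the machinery already developed, being careful throughout with the left/right bookkeeping.

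First I would recall the standard characterization of right semiperfect coalgebras (Lin; see \cite{L} and \cite[Chapter 3]{DNR}): $C$ is right semiperfect if and only if the injective hull $E(T)$ of every simple \emph{left} $C$-comodule $T$ is finite dimensional; equivalently, in a decomposition $C=\bigoplus_{i\in H}E(T_i)$ of $C$ as a left comodule into indecomposable injectives, every $E(T_i)$ is finite dimensional. This single fact is the only property of right-semiperfectness I will use.

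Next I would obtain the first two assertions, left $\Ff$-Noetherianness and closure of left rationals, in one stroke from Theorem \ref{t.3}. Since each $E(T)$ is finite dimensional, $E(T)/T$ is finite dimensional, so its socle is finite dimensional and $\Hom(L,E(T)/T)\neq 0$ for only finitely many simple left comodules $L$; thus the standing hypothesis of Theorem \ref{t.3} is satisfied. Moreover condition (iii) there, that $L_1E(T)$ be finite dimensional, holds trivially because $E(T)$ itself is finite dimensional. Hence all the equivalent conditions of Theorem \ref{t.3} hold; in particular (vi) gives that $C^*$ is left $\Ff$-Noetherian and (i) gives that left rational $C^*$-modules are closed under extensions. (Alternatively, one can deduce left $\Ff$-Noetherianness directly from Lemma \ref{l.2}, which shows each $E(T_i)^*$ is almost Noetherian, together with the decomposition argument used in the proof of (iv)$\Rightarrow$(vi) of Theorem \ref{t.3}, and then invoke Corollary \ref{c.FN}.)

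Finally I would handle the genuinely new, asymmetric statement, that right rationals are also closed under extensions. Since left rationals are already closed under extensions, Theorem \ref{t.1}(i) shows that products of closed cofinite ideals of $C^*$ are closed; by Proposition \ref{p.3} this property is left/right symmetric, so the hypothesis that open ideals of $C^*$ are closed under products is in force. I would then apply the \emph{right-hand} analogue of Corollary \ref{c.conditions}(i): for every simple \emph{left} comodule $T$ the injective hull $E(T)$ has finite Loewy length (being finite dimensional), so right rational $C^*$-modules are closed under extensions. The main obstacle is precisely this left/right accounting: right-semiperfectness controls \emph{left} injective hulls, which yields the left-sided conclusions quickly through Theorem \ref{t.3}, but to reach the right torsion functor one cannot invoke the right-hand analogue of Theorem \ref{t.3}, whose standing quiver hypothesis concerns \emph{right} comodules and is unavailable (we have no control over right injective hulls). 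The key point is that Corollary \ref{c.conditions}(i) needs no such finiteness-of-the-quiver hypothesis, only finite-dimensionality of the relevant injective hulls together with the symmetric closure of open ideals under products, which is exactly what the finite-dimensionality of left injective hulls supplies on the right.
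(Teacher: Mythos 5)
Your proposal is correct and follows essentially the same route as the paper: reduce right-semiperfectness to finite-dimensionality of the left injective indecomposables, apply Theorem \ref{t.3} to get left $\Ff$-Noetherianness and the left torsion functor, and then invoke the left-right symmetric version of Corollary \ref{c.conditions}(i) (after noting that the product-closure of open ideals is a symmetric condition) to get the right torsion functor. The only cosmetic differences are that the paper cites Corollary \ref{c.conditions}(ii) for the left-sided conclusion where you use Theorem \ref{t.3}(i) directly, and it obtains the symmetric product-closure of open ideals from the closure of finite dimensional rationals under extensions rather than routing through Theorem \ref{t.1}.
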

\begin{proof}
The fact that $C^*$ is $\Ff$-Noetherian follows directly from Theorem \ref{t.3}; therefore, finite dimensional rationals (left or right) are closed under extensions. Since $C$ is right semiperfect, left indecomposable injective comodules are finite dimensional, so Corollary \ref{c.conditions} (ii) implies that left rationals are closed under extensions and Corollary \ref{c.conditions} (i) (its left-right symmetric version), implies that right rationals are closed under extensions too.
\end{proof}

We will see in Example \ref{e.2} that for a right semiperfect coalgebra $C$, its dual $C^*$ need not be right $\Ff$-Noetherian. Another direct consequence of Theorem \ref{t.1} (but which can be easily obtained also directly, see also \cite{L2}) is

\begin{corollary}
Let $C=\bigoplus\limits_{i\in I}C_i$ be a direct sum of coalgebras. $C$ has a left rational torsion functor if and only if all $C_i$ have left rational torsion functors.
\end{corollary}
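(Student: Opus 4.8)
The plan is to prove both implications of the equivalence, using the structural decomposition of a direct sum of coalgebras into its corresponding module category. First I would record the basic fact that if $C=\bigoplus_{i\in I}C_i$ is a direct sum of coalgebras, then dually $C^*=\prod_{i\in I}C_i^*$ as algebras, and the category of rational left $C^*$-modules decomposes accordingly: a left $C^*$-module is rational if and only if each of its components is a rational $C_i^*$-module, and every rational module $M$ splits as $M=\bigoplus_{i\in I}M_i$ where $M_i$ is a rational $C_i$-comodule (equivalently, $M_i=e_iM$ for the central idempotent $e_i\in C^*$ corresponding to $\varepsilon_i$). The key structural point is that the simple comodules, and more generally all comodules, are supported on the individual blocks $C_i$, so that $Rat({}_{C^*}\Mm)\cong\prod_{i\in I}Rat({}_{C_i^*}\Mm)$ as categories.

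For the easy direction, suppose all the $C_i$ have left rational torsion functors and let $0\to M'\to M\to M''\to 0$ be an exact sequence of left $C^*$-modules with $M',M''$ rational. I would decompose $M'=\bigoplus_i M_i'$ and $M''=\bigoplus_i M_i''$ along the blocks. The point is that the middle term $M$, while not a priori decomposed, still carries an action of each central idempotent $e_i$, and applying $e_i$ to the whole sequence yields an exact sequence $0\to e_iM'\to e_iM\to e_iM''\to 0$ of $C_i^*$-modules with rational ends. Since each $C_i$ has a torsion Rat functor, each $e_iM$ is a rational $C_i$-comodule. One then needs to check that $M=\bigoplus_i e_iM$ is rational as a $C^*$-module; this follows because every element of $M$ is supported on finitely many blocks (as its image generates a module that is an extension involving finitely many $e_iM'$, $e_iM''$), hence lies in a finite rational submodule.

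For the converse, suppose $C$ has a left rational torsion functor and fix an index $j\in I$. I would take any exact sequence $0\to N'\to N\to N''\to 0$ of left $C_j^*$-modules with $N',N''$ rational $C_j$-comodules, and view it as a sequence of $C^*$-modules via the projection $C^*\to C_j^*$ (so $C_j^*$-modules are exactly those $C^*$-modules annihilated by $\prod_{i\neq j}C_i^*$, i.e. on which all $e_i$ with $i\neq j$ act as zero). Under this identification $N'$ and $N''$ are rational $C^*$-modules, so by hypothesis $N$ is a rational $C^*$-module; but since $N$ is annihilated by the idempotents $e_i$ for $i\neq j$, it is supported on the $j$-th block and is therefore a rational $C_j$-comodule. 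This shows $C_j$ has a left rational torsion functor.

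The main obstacle I anticipate is the careful handling of the middle term in the easy direction: one must verify that rationality can be tested blockwise even though $M$ is not given as a direct sum, and that reassembling the pieces $e_iM$ actually recovers $M$ and yields a rational module. This rests on the observation that each element $x\in M$ generates a finitely generated, and hence (being squeezed between the rational $M'$ and the finite-dimensional-per-element behaviour of $M''$) effectively finitely-supported submodule, so that $x$ is annihilated by all but finitely many $e_i$ and the decomposition $M=\bigoplus_i e_iM$ is genuine. Once this support argument is in place the rest is a routine diagram chase, and the converse is essentially formal once the identification of $C_j^*$-modules as a full subcategory of $C^*$-modules is made precise.
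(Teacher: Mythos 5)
Your argument is correct, but it is not the route the paper takes: the paper obtains this corollary as a direct consequence of Theorem \ref{t.1}, i.e.\ by observing that the two characterizing conditions (closure of open cofinite ideals of $C^*=\prod_i C_i^*$ under products, and $\Ext^1_{C^*}(C_0,C)=0$) hold for $C$ precisely when they hold for every block $C_i$, using that $C_0=\bigoplus_i (C_i)_0$, that simple comodules and finite-dimensional subcomodules live in finitely many blocks, and that $\Ext$ computations decompose along the central idempotents. Your proof is instead the ``direct'' argument the paper alludes to (and attributes to Lin's earlier work \cite{L2}): a self-contained block decomposition via the central idempotents $e_i=\varepsilon_i$, which has the advantage of not invoking the main theorem at all and of making the converse direction completely formal. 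The one step that genuinely needs care is exactly the one you flag: showing that the middle term $M$ of an extension satisfies $M=\bigoplus_i e_iM$. Your parenthetical justification is a little loose; the clean way to finish it is to take $x\in M$ with image $\bar x\in M''$, note that $C^*\bar x$ is finite dimensional rational and hence supported on a finite set $F\subseteq I$, so that $x-\sum_{j\in F}e_jx$ maps to $0$ in $M''$ and therefore lies in $M'$, which is rational and hence has all its elements finitely supported; thus $x$ itself is supported on finitely many blocks. With that made explicit, each $C^*x$ sits inside a finite direct sum of the rational modules $e_iM$, and $M$ is rational as a sum of such submodules. Both approaches are valid; yours buys independence from Theorem \ref{t.1}, while the paper's emphasizes that the corollary is subsumed by the general characterization.
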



\section{$\Ff$-Noetherian coalgebras and torsion rational functor}

We give connections between $\Ff$-Noetherian and properies investigated before, more specifically, locally finiteness. For a simple left comodule $S$ and a comodule $M$, let $[M;S]$ denote the multiplicity of $S$ in the socle of $M$. It can be finite or infinite. Note that a comodule $M$ is quasifinite if $[M;S]<\infty$ for all simples $S$. The following proposition gives a cryterion to test the $\Ff$-Noetherian property:

\begin{proposition}\label{p.4}
The following assertions are equivalent:\\
(i) $C^*$ is left $\Ff$-Noetherian.\\
(ii) $C/X$ is finitely cogenerated for each finite dimensional left subcomodule $X\subseteq C$, that is, there is a monomorphism $C/X\hookrightarrow C^n$ for some $n$.\\
(iii) $\sup\{\frac{[C/X;S]}{[C;S]}|S {\rm\, simple\,left\,comodule}\}<\infty$ for each finite dimensional left subcomodule $X$ of $C$.
\end{proposition}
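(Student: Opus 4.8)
The plan is to establish a three-way cycle of implications (i)$\Rightarrow$(ii)$\Rightarrow$(iii)$\Rightarrow$(i), unwinding everything back to the definition of $\Ff$-Noetherian via the finite topology correspondence between finite dimensional subcomodules $X\subseteq C$ and closed cofinite left ideals $X^\perp\subseteq C^*$. The key dictionary is that $C^*$ is left $\Ff$-Noetherian precisely when every such $X^\perp$ is finitely generated, and the isomorphism $(C/X)^*\cong X^\perp$ of $C^*$-modules will convert "finitely cogenerated" statements about comodules into "finitely generated" statements about ideals. The multiplicity condition (iii) is a quantitative reformulation of finite cogeneration that I expect to be the most delicate link to pin down.

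First I would prove (i)$\Rightarrow$(ii). Assuming $C^*$ is left $\Ff$-Noetherian and $X\subseteq C$ is a finite dimensional left subcomodule, the ideal $X^\perp$ is closed cofinite, hence finitely generated, say by $f_1,\dots,f_n$. Then, exactly as in Remark \ref{r.1}, I would define $\psi\colon C\to C^n$ by $\psi(c)=(f_i(c_0)c_{-1})_{i=1,\dots,n}$, check it is a morphism of left comodules, and verify $\ker(\psi)=(X^\perp)^\perp=X$, so that $C/X\hookrightarrow C^n$ and $C/X$ is finitely cogenerated. This gives (ii). For (ii)$\Rightarrow$(i) (folding the return arc through here rather than through (iii) if convenient), the embedding $C/X\hookrightarrow C^n$ dualizes to a surjection $(C^*)^n\twoheadrightarrow (C/X)^*\cong X^\perp$, exhibiting $X^\perp$ as finitely generated; since every closed cofinite left ideal has the form $X^\perp$ for some finite dimensional $X$, this yields the $\Ff$-Noetherian property.

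For the equivalence with (iii), I would argue that (ii) and (iii) both measure the failure of a bound on socle multiplicities. A monomorphism $C/X\hookrightarrow C^n$ forces $[C/X;S]\le n\,[C;S]$ for every simple $S$, so the supremum in (iii) is at most $n$; conversely, a finite uniform bound $\sup_S [C/X;S]/[C;S]\le N$ means the socle of $C/X$ embeds into the socle of $C^N$ (distributing copies of each $E(S)$ according to the multiplicities in $C$), and since $C$ is injective this socle embedding extends to an embedding $C/X\hookrightarrow C^N$ by injectivity of $C^N$ and essentiality of the socle. The content here is that the socle multiplicities of $C$ itself record exactly which injective indecomposables, and how many, are available to receive the socle of the quotient.

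The main obstacle I anticipate is the direction (iii)$\Rightarrow$(ii): one must turn a finite supremum of ratios into a single finite cogeneration. The subtlety is that the supremum being finite does not a priori bound the \emph{total} number of isomorphism types of simples occurring in the socle of $C/X$, only their relative multiplicities; so one must ensure that each simple $S$ appearing in $\mathrm{soc}(C/X)$ already appears in $\mathrm{soc}(C)$ (which holds because $C/X$ is a quotient of $C$, hence every simple subquotient is a simple subquotient of $C$), and then that the uniform ratio bound lets the essential socle embedding be packaged into finitely many copies $C^N$. Handling the bookkeeping of multiplicities across possibly infinitely many isomorphism types, while keeping the number of copies $N$ finite and independent of $S$, is where the care is required; the injectivity of $C$ and the essentiality of the socle are the tools that make the socle-level embedding lift to the whole comodule.
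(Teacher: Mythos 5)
Your proof is correct and follows essentially the same route as the paper: the equivalence (i)$\Leftrightarrow$(ii) via the duality $(C/X)^*\cong X^\perp$ together with the comodule map built from generators $f_1,\dots,f_n$ of $X^\perp$ (the paper uses the maps $\varphi_i(c)=f_i\cdot c$ and the intersection $X=\bigcap_i\ker\varphi_i$, which is the same device as the single map $\psi$ of Remark \ref{r.1} that you invoke), and the equivalence with (iii) by comparing socle multiplicities and lifting the socle embedding $s(C/X)\hookrightarrow C_0^n\subseteq C^n$ through injectivity of $C^n$ and essentiality of the socle. The point you flag as delicate is handled identically in the paper, so no further comment is needed.
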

\begin{proof}
(i)$\Leftrightarrow$(ii) We proceed similar to Remark \ref{r.1}. If $C/X\hookrightarrow C^n$ is a monomorphism, dualizing we get an epimorphism $(C^*)^n\rightarrow M^*\rightarrow 0$. Conversely, let $f_1,\dots,f_n$ generate the right $C^*$-module $X^\perp$, and let $X_i=(C^*f_i)^\perp$. Then $X^\perp=\sum\limits_{i=1}^nX_i^\perp=(\bigcap\limits_{i=1}^nX_i)^\perp$ so $X=\bigcap\limits_{i=1}^nX_i$. The map $\varphi_i:C\rightarrow C$, $\varphi(c)=f_i\cdot c$ is a morphism of right $C^*$-modules (so of left $C$-comodules), and $\ker(\varphi_i)=\{c|c_1f_i(c_2)=0\}=\{c|c^*(c_1f_i(c_2))=0\,\forall c^*\in C^*\}=(C^*f_i)^\perp=X_i$. This shows that $C/X_i$ embedds in $C$. Therefore we can get an embedding
$$C/X=C/\bigcap\limits_{i=1}^nX_i\hookrightarrow \bigoplus\limits_{i=1}^nC/X_i\hookrightarrow C^n$$
(i)$\Leftrightarrow$(iii) If $C/X$ embedds in $C^n$ then the multiplicity of $S$ in $C/X$ is smaller than in $C^n$, so $[C/X;S]\leq n[C;S]$. Conversely, if $[C/X;S]\leq n[C;S]$ for some fixed $n$ (depending on $X$) and all $S$, we see that the socle of $C/X$ embedds in $C_0^n$: $s(C/X)=\bigoplus\limits_{S}S^{[C/X;S]}\hookrightarrow\bigoplus\limits_{S}S^{n[C;S]}=C_0^n$. Hence $s(C/X)$ embedds in $C^n$, so $C/X$ embedds in $C^n$ since $s(C/X)$ is essential in $C/X$ and $C^n$ is injective. This finishes the proof. 
\end{proof}

It was conjectured in \cite[Remark 2.13]{CNO} that if $C$ has a left torsion Rat-functor, i.e. the left rational are closed under extensions, then $C$ is $\Ff$-Noetherian. The conjecture was motivated by a series of results on the Rat functor which gave evidence for it. However, it was proved in \cite{TT} by using a certain more complicated semigroup coalgebra construction that this conjecture is false. This simple characterization of $\Ff$-Noetherian, together with a result of Radford from \cite{Rad3}, yield an easy way to give counterexamples to this conjecture

\begin{example}\label{e.2}
Consider the following quiver $\Gamma$:
$$\xygraph{ 
!~:{@{-}|@{>}}
a((:[l(1.4)]{b_1}^(.6){x_{11}}[uur(.7)]{b_2}[r(1.1)]{\dots}[dr(1.3)]{b_n}),:@/^.7pc/"b_2"^(.6){x_{21}},:@/_.7pc/"b_2"_(.6){x_{21}},:@/^.9pc/"b_n"^(.6){x_{n1}},
!~:{@{}|@{}} :"b_n"|{\dots},!~:{@{-}|@{>}} :@/_.9pc/"b_n"_(.6){x_{nn}}, :@/^.3pc/"b_n",[dr]{\dots})
}$$
Let $C$ be the path subcoalgebra of the full path coalgebra of $\Gamma$, which has as a $\KK$-basis the set $\{a,b_n,x_{ni}|n\in\NN,1\leq i\leq n\}$. We see that this coalgebra has $C=C_1$. Arguing exactly as we did in Example \ref{e.1}, we see that this coalgebra is locally finite (or apply Lemma \ref{l.quasifinite} and note that the space of $(u,v)$ skew primitives is finite dimensional for any two grouplike elements $u,v$). As pointed out before, if $\KK$ is infinite then $C_0\cong \KK^{(\NN)}$ is coreflexive. Then, by \cite[3.3]{Rad3}, since $C_0$ is coreflexive, $C$ is locally finite and $C=C_1$, we get that $C$ is coreflexive (note: this is called reflexive in \cite{Rad3}). By the results of the previous sections (for example, Proposition \ref{p.reflexive} or Corollary \ref{c.conditions}), $C$ will have a torsion Rat-functor (for both left and right $C^*$-modules). But note that $[C/\KK\{a\};\KK\{b_n\}]=n$ as right comodules, and $C$ is pointed so $[C;\KK\{b_n\}]=1$, so condition (iii) of Proposition \ref{p.4} is not verified for the right comodule $\KK\{a\}$. Therefore, by (the right version) of Proposition \ref{p.4}, we have that $C^*$ is not right $\Ff$-Noetherian. We note that at each vertex $u$, there are only finitely many arrows (and paths) into $u$, so by well known characterizations of the injective indecomposable objects over path coalgebras (see \cite{simson}, \cite{DIN}) it follows that this left injective indecomposable are finite dimensional. Therefore, $C$ is right semiperfect. However, infinitely many arrows go out of $a$, so the injective hull of the right comodule $\KK\{a\}$ is infinite dimensional. Hence, $C$ is not left semiperfect. This example shows also that if $C$ is a left (right) semiperfect coalgebra then $C^*$ is not necessarily left (right) $\Ff$-Noetherian, and also, that $C^*$ can be only one-sided $\Ff$-Noetherian, so it makes sense to distinguish between the two notions.
\end{example}

{\bf Closing Remarks} We have seen that in general, the rational modules are closed under extensions if and only if the finite dimensional rationals are so, and a homological condition holds. We note that in all the particular classes of examples that we have for which the rationals are closed under extension (e.g. see Corollary \ref{c.conditions}, Theorem \ref{t.3}, and their consequences), the Ext-condition from Theorem \ref{t.1} is automatically satisfied. That is, if finite rationals are closed under extensions, then all left $C^*$-modules are so. The known examples for which the $Rat$ functor is not torsion, are in fact examples where the finite rational modules are not closed under extensions. One can see that the counter-example to the above mentioned question \cite[2.13]{CNO} build in \cite[Example 12]{TT} is also of this type. Indeed, one can see that the coalgebra $C=\KK S$ in that example has coradical filtration of length 2 (i.e. $C=C_2$), and so by Corollary \ref{c.conditions}, the finite rationals are not closed under extensions; in fact, this coalgebra $C$ is not coreflexive, so the example is of the same nature as the above Example \ref{e.2}. This leads one to ask the following questions:

\vspace{.5cm}

{\bf Question 1} If the finite dimensional rational (left, or equivalently, right) $C^*$-modules are closed under extensions, does it follow that $Rat({}_{C^*}\Mm)$ (and, by symmetry, also $Rat(\Mm_{C^*})$) is closed under extensions?

\vspace{.5cm}

{\bf Question 2} If $Rat({}_{C^*}\Mm)$ is closed under extensions, does it also follow that $Rat(\Mm_{C^*})$ is closed under extensions? 

\vspace{.5cm}

{\bf Question 3} If $E$ is an injective indecomposable left comodule, and $S$ is a simple left $C$-comodule, does it follow that $\Ext^1(S,E)=0$? 

\vspace{.5cm}

Note that an afirmative answer to Q1 implies afirmative an answer to Question 2, and afirmative answer to Q3 implies afirmative Q1. We believe that a counterexample to Q1, Q2 and Q3 might be constructed by using the characterizations of Theorem \ref{t.3}, more precisely, a coalgebra for which for all simple left comodules $S$, $\Ext^{C,1}(L,S)\neq 0$ for only finitely many simple left comodules $L$, but for which this condition is not true for right simple comodules. Such a coalgebra might be obtained by considering path (sub)coalgebras of quiver coalgebras for quivers $\Gamma$ in which at each vertex there are only finitely many arrows going in, but for some vertices there are infinitely arrows going out. In this case, $C$ would have the following properties: finite dimensional rational modules are closed under extensions, and $C^*$ is left $\Ff$-Noetherian, so $Rat({}_{C^*}\Mm)$ is closed under extensions; but one might expect that $Rat(\Mm_{C^*})$ is not closed under extensions, which would answer Q1,Q2 and Q3 in the negative.

\bigskip\bigskip\bigskip

\begin{center}
\sc Acknowledgment
\end{center}
The author wishes to thank E.Friedlander for interesting insights and conversations on the theory of rational modules.
This work was supported by the strategic grant POSDRU/89/1.5/S/58852, Project ``Postdoctoral programe for
training scientific researchers'' cofinanced by the European Social Fund within the Sectorial Operational Program Human
Resources Development 2007-2013.

\bigskip\bigskip\bigskip





\end{document}